\documentclass[oneside,english]{amsart}
\usepackage[T3,T1]{fontenc}
\usepackage[latin9]{inputenc}
\usepackage{color}
\usepackage{babel}
\usepackage{mathrsfs}
\usepackage{mathtools}
\usepackage{amsbsy}
\usepackage{amstext}
\usepackage{amsthm}
\usepackage{amssymb}
\usepackage{cancel}
\usepackage{mathdots}
\usepackage[pdfusetitle,
 bookmarks=true,bookmarksnumbered=true,bookmarksopen=true,bookmarksopenlevel=3,
 breaklinks=false,pdfborder={0 0 1},backref=false,colorlinks=true]
 {hyperref}
\hypersetup{
 pdfborderstyle=,pdfborderstyle={},pdfborderstyle={},pdfborderstyle={},pdfborderstyle={},pdfborderstyle={},pdfborderstyle={},pdfborderstyle={},pdfborderstyle={},pdfborderstyle={},pdfborderstyle={},pdfborderstyle={},pdfborderstyle={},pdfborderstyle={},pdfborderstyle={},pdfborderstyle={},pdfborderstyle={},pdfborderstyle={},pdfborderstyle={},pdfborderstyle={},pdfpagelayout=OneColumn,pdfnewwindow=true,pdfstartview=XYZ,plainpages=false,linkcolor=blue,urlcolor=blue,citecolor=red,anchorcolor=blue,linkcolor=blue,urlcolor=blue,citecolor=red,anchorcolor=blue}

\makeatletter
\numberwithin{equation}{section}
\numberwithin{figure}{section}
\theoremstyle{plain}
\newtheorem{thm}{\protect\theoremname}
\theoremstyle{plain}
\newtheorem{prop}{\protect\propositionname}
\theoremstyle{definition}
\newtheorem{defn}{\protect\definitionname}
\theoremstyle{remark}
\newtheorem{rem}{\protect\remarkname}
\theoremstyle{definition}
\newtheorem{example}{\protect\examplename}
\theoremstyle{definition}
\newtheorem{problem}{\protect\problemname}
\theoremstyle{plain}
\newtheorem{cor}{\protect\corollaryname}
\theoremstyle{plain}
\newtheorem{lem}{\protect\lemmaname}
\theoremstyle{remark}
\newtheorem{claim}{\protect\claimname}

\usepackage{babel}
\usepackage{enumerate}
\usepackage{verbatim}
\usepackage{mathrsfs}
\usepackage{cite}
\usepackage{bbm}

\mathtoolsset{showonlyrefs}

\usepackage{physics}
\usepackage{tikz}
\usepackage{yhmath}
\usepackage{siunitx}
\usepackage{array}
\usepackage{multirow}
\usepackage{gensymb}
\usepackage{tabularx}
\usepackage{extarrows}
\usepackage{booktabs}
\usetikzlibrary{fadings}
\usetikzlibrary{patterns}
\usetikzlibrary{shadows.blur}
\usetikzlibrary{shapes}
\usetikzlibrary{arrows.meta}

\tikzset{
    ShortArrow/.tip={Triangle[length=2pt, width=3pt]},
    >=ShortArrow
}

\providecommand{\keywords}[1]{\textbf{Index terms---} #1}

\DeclareSymbolFont{tipa}{T3}{cmr}{m}{n}
\DeclareMathAccent{\inbreve}{\mathalpha}{tipa}{16}

\newcommand{\subsetsim}{\mathrel{%
  \ooalign{\raise0.2ex\hbox{$\subset$}\cr\hidewidth\raise-0.8ex\hbox{\scalebox{0.9}{$\sim$}}\hidewidth\cr}}}
\newcommand{\supsetsim}{\mathrel{%
  \ooalign{\raise0.2ex\hbox{$\supset$}\cr\hidewidth\raise-0.8ex\hbox{\scalebox{0.9}{$\sim$}}\hidewidth\cr}}}

\newcommand{\subsetapprox}{\mathrel{%
  \ooalign{\raise0.4ex\hbox{$\subset$}\cr\hidewidth\raise-0.8ex\hbox{\scalebox{0.9}{$\approx$}}\hidewidth\cr}}}
\allowdisplaybreaks[1]
\flushbottom

\newcommand{\bone}{\mathbbm{1}}



\def\area{\mathrm{area}}
\def\M{\mathcal{M}}
\def\X{\mathcal{X}}
\def\P{\mathscr{P}}

\def\W{\mathsf{W}_{2}}

\def\L{\mathsf{L}} 

\def\1{\mathbf{1}}

\def\d{{\text {\rm d}}}

\def\Bern{\mathrm{Bern}}



\theoremstyle{definition}
\theoremstyle{definition}
\newtheorem{condition}{Condition}

\providecommand{\corollaryname}{Corollary}
\providecommand{\lemmaname}{Lemma}
\providecommand{\propositionname}{Proposition}
\providecommand{\remarkname}{Remark}
\providecommand{\theoremname}{Theorem}

\providecommand{\definitionname}{Definition}

\providecommand{\claimname}{Claim}

\providecommand{\examplename}{Example}

\providecommand{\problemname}{Problem}

\makeatother

\providecommand{\claimname}{Claim}
\providecommand{\corollaryname}{Corollary}
\providecommand{\definitionname}{Definition}
\providecommand{\examplename}{Example}
\providecommand{\lemmaname}{Lemma}
\providecommand{\problemname}{Problem}
\providecommand{\propositionname}{Proposition}
\providecommand{\remarkname}{Remark}
\providecommand{\theoremname}{Theorem}

\begin{document}
\title[Large Deviations Principle for Isoperimetry]{Large Deviations Principle for Isoperimetry and Its Equivalence to
Nonlinear Log-Sobolev Inequalities }
\author{Lei Yu}
\thanks{L. Yu is with the School of Statistics and Data Science, LPMC, KLMDASR,
and LEBPS, Nankai University, Tianjin 300071, China (e-mail: leiyu@nankai.edu.cn). }
\begin{abstract}
The isoperimetric problem is a classic topic in geometric measure
theory, yet critical questions regarding the characterization of optimal
solutions---even asymptotically optimal ones---remain largely unresolved.
In this paper, we investigate the large deviations asymptotics (governing
sequences of exponentially small sets) for the isoperimetric problem
on the product Riemannian manifold $\M^{n}$ endowed with the product
probability measure $\nu^{\otimes n}$, where $(\M,\nu)$ is a weighted
Riemannian manifold with nonnegative Bakry--Émery--Ricci curvature.
We establish an exact characterization of the large deviations asymptotics
of the isoperimetric profile, which reveals a precise equivalence
between this asymptotic isoperimetric inequality and an established
strengthening of log-Sobolev inequality---namely, the nonlinear log-Sobolev
inequality. It is observed that conditional typical sets (i.e., $\mathbf{f}$-coordinate
half-spaces for some vector $\mathbf{f}$ of functions), a fundamental
concept from information theory, form an asymptotically optimal solution
to the isoperimetric problem. This class of subsets further yields
an upper bound on the isoperimetric profile in the central limit regime
(concerning sets of constant weighted volume). Although our results
are stated for product spaces, they imply certain isoperimetric inequalities
for non-product spaces, e.g., they can be used to recover the weaker
equivalence established by Ledoux and Bobkov for arbitrary non-product
spaces or used to establish quantitative relations among the optimal
constants in isoperimetric, concentration and transport inequalities
for product or non-product spaces. Our results provide a rigorous
justification from the perspective of nonlinear log-Sobolev inequalities
for why isoperimetric minimizers behave fundamentally differently
across spaces with distinct geometric structures. Our proof idea is
a new framework which integrates tools from information theory, optimal
transport, and geometric measure theory. 
\end{abstract}

\keywords{Isoperimetric inequality, large deviations, log-Sobolev inequality,
information theory, optimal transport, Wasserstein gradient flow,
geometric measure theory}

\maketitle
\subjclass[2020]{60E15, 39B62, 52B60, 60F10}

\tableofcontents{}

\section{Introduction}

In a metric-measure space, the isoperimetric problem seeks the subset
of fixed measure that minimizes surface area---where surface area
is rigorously defined via the boundary measure. This classic problem
remains largely unresolved in both its exact and asymptotic formulations,
with the explicit characterization of the isoperimetric profile (the
minimal surface area corresponding to each fixed measure) and its
asymptotic behavior posing a widely open question, save for a handful
of specific cases: e.g., the standard Gaussian measure, log-concave
measures on the real line, and Euclidean spheres endowed with constant
density. In this paper, we study the isoperimetric problem across
various asymptotic regimes, with particular emphasis on the large
deviations regime that considers sequences of exponentially small
sets. We formalize our core analytical setup as follows.

Let $(\M,g)$ be a $k$-dimensional smooth complete oriented connected
Riemannian manifold with induced geodesic distance $d$. We assume
$k\ge1$ and $(\M,g)=(\mathbb{R},|\cdot|)$ if $k=1$. Let $\nu=e^{-V}\mathrm{vol}$
with $V\in C^{2}(\M)$ be a reference Borel probability measure on
$\M$ that is absolutely continuous with respect to the Riemannian
volume $\mathrm{vol}$. We consider the $n$-fold product space $\M^{n}$,
equipped with the product measure $\nu_{n}:=\nu^{\otimes n}$ and
the product distance $d_{n}(\mathbf{x},\mathbf{y})=\sqrt{\sum_{i=1}^{n}d(x_{i},y_{i})^{2}}$,
where $\mathbf{x}=(x_{1},x_{2},...,x_{n})$.

For a set $A\subseteq\M^{n}$, denote its $r$-enlargement as 
\begin{equation}
A^{r}:=\bigcup_{\mathbf{x}\in A}\{\mathbf{y}\in\M^{n}:d_{n}(\mathbf{x},\mathbf{y})\leq r\}.\label{eq:Gamma-1}
\end{equation}
For a Borel set $A\subseteq\M^{n}$, its boundary measure (i.e., perimeter)
is defined by 
\begin{equation}
\nu_{n}^{+}(A):=\liminf_{r\downarrow0}\frac{\nu_{n}(A^{r})-\nu_{n}(A)}{r}.\label{eq:perimeter}
\end{equation}
In the isoperimetric problem, the objective is to minimize $\nu_{n}^{+}(A)$
over all sets $A$ of a given measure, i.e., to determine the isoperimetric
profile: For $a\in[0,1]$, 
\begin{equation}
I_{n}(a):=\inf_{A:\nu_{n}(A)=a}\nu_{n}^{+}(A),\label{eq:Gamma}
\end{equation}
where $A$ is a Borel set. Note that there are several other ways
in the literature to define the perimeter of a set, but they coincide
when the set admits smooth boundary, and thus, lead to the same isoperimetric
profile in our setup; see details in \cite[pp. 32-33]{bayle2003proprietes,ambrosio2017perimeter}.

Exactly characterizing the isoperimetric profile is in general extremely
difficult, and remains widely open except for certain specific cases.
For example, when $\nu$ is the standard Gaussian measure, the isoperimetric
profile $I_{n}(a)=I_{\mathrm{G}}(a):=\phi(\Phi^{-1}(a))$ for all
$n\ge1$ with half-spaces as isoperimetric minimizers \cite{sudakov1978extremal,borell1975brunn}.
Here, $\phi$ and $\Phi$ are respectively the pdf and cdf of the
standard Gaussian. It is widely known that $I_{\mathrm{G}}(a)\sim a\sqrt{2\ln\frac{1}{a}}$
as $a\to0$. That is, the asymptotic isoperimetric ratio for the Gaussian
measure is 
\begin{equation}
\lim_{a\to0}\frac{I_{\mathrm{G}}(a)}{a\sqrt{\ln\frac{1}{a}}}=\sqrt{2}.\label{eq:Gaussian}
\end{equation}
Another example is log-concave measures on the real line (i.e., $n=1$),
for which the isoperimetric profile is attained by intervals or half-lines
\cite{bobkov1996extremal}. Furthermore, in the Euclidean sphere of
a given dimension equipped with constant density, the isoperimetric
profile is known as well, which is attained by spherical caps.

As noted at the outset of this paper, our primary aim is to study
the asymptotic behavior of the isoperimetric profile, in particular
its large deviations asymptotics. We focus on this behavior in the
regime where the probability measure of the set decays exponentially
with increasing dimension, and to this end we identify the following
limits: for $\alpha>0$, 
\begin{align}
\underline{\Lambda}(\alpha) & :=\liminf_{n\to\infty}\frac{I_{n}(e^{-n\alpha})}{e^{-n\alpha}\sqrt{n}},\label{eq:-82}
\end{align}
and $\overline{\Lambda}(\alpha)$ which is defined similarly but with
$\liminf_{n\to\infty}$ replaced by $\limsup_{n\to\infty}$. We denote
them by $\Lambda(\alpha)$ if they are equal.

\subsection{Main Results}

We next provide a detailed description of our main results.

\subsubsection{Large Deviations Principle }

A natural convexity condition on a manifold with density is the curvature-dimension
condition $\mathrm{CD}(0,\infty)$.

\begin{condition}[Nonnegative Bakry--Émery--Ricci Curvature]\label{cond:convexity}
We assume that 
\[
\mathrm{Ric}_{V}:=\mathrm{Ric}_{g}+\mathrm{Hess}_{g}V\geq0\;\;\text{ as 2-tensor fields},
\]
where $\mathrm{Ric}_{g}$ is the Ricci curvature and $\mathrm{Hess}_{g}V$
is the Hessian of $V$. \end{condition}

Since $I_{n}(a)/a$ is non-increasing in $a$ under Condition \ref{cond:convexity}
(see e.g. \cite[Proposition 3.1]{milman2010isoperimetric}), $\Lambda$
is non-decreasing. 
\begin{prop}
Under Condition \ref{cond:convexity}, $\Lambda$ is non-decreasing. 
\end{prop}

Let $\P_{2}^{\mathrm{ac}}(\M)$ be the set of probability measures
which are absolutely continuous with respect to $\nu$ and have finite
second-order moments. We also need the concept of nonlinear log-Sobolev
inequality and a generalized version of half-spaces. 
\begin{defn}
\label{def:nonlinearLS} For a convex function $\theta:[0,\infty]\to[0,\infty]$,
we say a probability measure $\nu$ admits the $\theta$-nonlinear
log-Sobolev inequality if 
\begin{equation}
\theta(D(\mu\|\nu))\leq I(\mu\|\nu),\;\forall\mu\in\P_{2}^{\mathrm{ac}}(\M),\label{eq:-74}
\end{equation}
where for $\mu=\rho\nu$, 
\[
D(\mu\|\nu):=\int\rho\ln\rho\d\nu\;\textrm{ and }\;I(\mu\|\nu):=\int\frac{|\nabla\rho|^{2}}{\rho}\d\nu
\]
are respectively the (relative) entropy and Fisher (relative) information. 
\end{defn}

\begin{rem}
The inequality in \eqref{eq:-74} holds for all $\mu\in\P_{2}^{\mathrm{ac}}(\M)$
if and only if it holds for all $\mu\in\P^{\mathrm{ac}}(\M)$ (the
set of probability measures which are absolutely continuous with respect
to $\nu$) \cite{otto2000generalization}. 
\end{rem}

The nonlinear log-Sobolev inequality, also known as the entropy-energy
inequality or the entropy isoperimetric inequality, can be seen as
a strengthening of the standard (linear) log-Sobolev inequality since
it provides more information about the tradeoff between entropy and
Fisher information. This concept was investigated in several important
works in the literature, e.g., Costa and Cover \cite{costa1984similarity},
Bakry \cite{bakry2004functional}, F. Y. Wang \cite{wang2008generalized},
Bakry, Gentil, and Ledoux \cite{bakry2013analysis}, as well as Polyanskiy
and Samorodnitsky \cite{polyanskiy2019improved}.

For a convex $\theta$, a $\theta$-nonlinear log-Sobolev inequality
holds only if $\theta(0)=0$ and $\theta$ is non-decreasing. In particular,
when $\theta(t)=Kt$ for some constant $K>0$, then the $\theta$-nonlinear
log-Sobolev inequality reduces to the standard log-Sobolev inequality.
Define 
\[
\Theta(\alpha):=\inf_{\mu\in\P_{2}^{\mathrm{ac}}(\M):D(\mu\|\nu)\ge\alpha}I(\mu\|\nu).
\]
Then, given the probability measure $\nu$, the optimal convex function
$\theta$ in the $\theta$-nonlinear log-Sobolev inequality in \eqref{eq:-74}
is the lower convex envelope of $\Theta$, given by 
\begin{align}
\breve{\Theta}(\alpha) & :=\inf_{\substack{\lambda\in[0,1],\alpha_{1},\alpha_{2}\ge0:\\
\lambda\alpha_{1}+(1-\lambda)\alpha_{2}\ge\alpha
}
}\lambda\Theta(\alpha_{1})+(1-\lambda)\Theta(\alpha_{2})\nonumber \\
 & =\inf_{\substack{\lambda\in[0,1],\mu_{1},\mu_{2}\in\P_{2}^{\mathrm{ac}}(\M):\\
\lambda D(\mu_{1}\|\nu)+(1-\lambda)D(\mu_{2}\|\nu)\ge\alpha
}
}\lambda I(\mu_{1}\|\nu)+(1-\lambda)I(\mu_{2}\|\nu).\label{eq:Theta_LCE}
\end{align}
It is obvious that $\breve{\Theta}(0)=0$ and $\breve{\Theta}$ is
non-decreasing and convex.

The second concept we need is a generalized version of half-spaces
which incorporates several important concepts as special cases, e.g.,
the concept of typical sets from information theory. 
\begin{defn}
Given $n\in\mathbb{N}$ and $\mathbf{f}=(f_{1},f_{2},...,f_{n})$
with each $f_{i}:\M\to\mathbb{R}$, the $\mathbf{f}$-coordinate half-space
is defined as 
\begin{equation}
H_{t}(\mathbf{f}):=\Big\{\mathbf{x}\in\M^{n}:\sum_{i=1}^{n}f_{i}(x_{i})\ge t\Big\}\label{eq:halfspace}
\end{equation}
for a real number $t$. In particular, when $f_{i}=\ln\frac{\d\mu_{i}}{\d\nu}$
for some probability measures $\mu_{i},i\in[n]$ and $t=\sum_{i=1}^{n}D(\mu_{i}\|\nu)-n\epsilon$
for some $\epsilon>0$, the $\mathbf{f}$-coordinate half-space reduces
to the conditional (relative entropy) typical set of $(\mu_{i})_{i\in[n]}$
w.r.t. $\nu$ which is given by 
\[
A_{n,\epsilon}((\mu_{i})_{i\in[n]}\|\nu):=\Big\{\mathbf{x}\in\M^{n}:\sum_{i=1}^{n}\ln\frac{\d\mu_{i}}{\d\nu}(x_{i})\ge\sum_{i=1}^{n}D(\mu_{i}\|\nu)-n\epsilon\Big\}.
\]
\end{defn}

\begin{rem}
The concept of $\mathbf{f}$-coordinate half-space reduces to the
Orlicz ball if $-f_{i}$ are Young functions (and $-t>0$). However,
we do not pose any restriction on functions $f_{i}$ here (except
for measurability), and thus our concept is more general and incorporates
standard half-spaces as special cases. 
\end{rem}

\begin{rem}
Here, $(\mu_{i})_{i\in[n]}$ can be seen as a conditional probability
measure $\mu_{X|W}$ with $\mu_{X|W=i}=\mu_{i}$, $\frac{1}{n}\sum_{i=1}^{n}\ln\frac{\d\mu_{i}}{\d\nu}(x_{i})$
is known as the conditional information density of $\mu_{X|W}$ w.r.t.
$\nu$ given $\mu_{W}\sim\mathrm{Unif}[n]$, and its expectation $D(\mu_{X|W}\|\nu|\mu_{W}):=\frac{1}{n}\sum_{i=1}^{n}D(\mu_{i}\|\nu)$
is known as the conditional relative entropy. That is why it is called
the conditional typical set. In addition, the standard definition
of the conditional relative entropy typical set is $\big\{\mathbf{x}\in\M^{n}:\big|\sum_{i=1}^{n}\ln\frac{\d\mu_{i}}{\d\nu}(x_{i})-\sum_{i=1}^{n}D(\mu_{i}\|\nu)\big|\le n\epsilon\big\}$
\cite[(11.206)]{Cov06}. Our definition here is slightly different
from the standard one, and has benefit that it incorporates the isoperimetric
minimizers for Gaussian and Lebesgue measures as special cases, which
is explained below. 
\end{rem}

The concept of $\mathbf{f}$-coordinate half-space generalizes the
standard half-spaces $\{\mathbf{x}\in\mathbb{R}^{n}:\sum_{i=1}^{n}w_{i}x_{i}\le t\}$
in Euclidean spaces, and the former reduces to the latter when $f_{i}(x_{i})=-w_{i}x_{i}$
and $t$ is rechosen properly, or essentially equivalently, set $\nu=\mathcal{N}(0,1),\mu_{i}=\mathcal{N}(w_{i},1)$
in the conditional typical set. It also generalizes the Euclidean
balls $\left\{ \mathbf{x}\in\mathbb{R}^{n}:\sum_{i=1}^{n}x_{i}^{2}\le t\right\} $
with $t>0$, and the former reduces to the latter when $f_{i}(x_{i})=-x_{i}^{2}$
and $t$ is rechosen properly, or essentially equivalently, set $\nu=\mathrm{Lebesgue},\mu=\mathcal{N}(0,c)$
with $c>0$ in the conditional typical set (although as an infinite
measure, the Lebesgue measure is not considered in this paper). In
both these two cases, $\mu_{i}$ are the extremizers in the optimal
nonlinear log-Sobolev inequality for $\nu$ \cite{bakry2004functional,polyanskiy2019improved}.
Furthermore, it is well known that standard half-spaces and Euclidean
balls are respectively isoperimetric minimizers for Gaussian measures
and Lebesgue measures. This observation indicates that the isoperimetric
minimizers perhaps can be identified by the extremizers in the optimal
nonlinear log-Sobolev inequality.

We now formalize the above observation and establish the large deviations
principle for the isoperimetric problem, a result of particular interest
as it reveals a precise equivalence between the asymptotic isoperimetric
inequality and the nonlinear log-Sobolev inequality. We state our
results by splitting into cases according to whether the following
condition holds.

\begin{condition}[Exponential Integrability]\label{cond:integrability}
For some (thus all) $x_{0}\in\M$, 
\[
\mathbb{E}_{\nu}[e^{\lambda d^{2}(X,x_{0})}]<\infty,\;\exists\lambda\in(0,\infty).
\]
\end{condition}

This condition can be partitioned into two cases: 
\begin{equation}
\mathbb{E}_{\nu}[e^{\lambda d^{2}(X,x_{0})}]<\infty,\;\forall\lambda\in(0,\infty),\label{eq:subGaussian}
\end{equation}
and 
\begin{equation}
\mathbb{E}_{\nu}[e^{\lambda_{1}d^{2}(X,x_{0})}]<\mathbb{E}_{\nu}[e^{\lambda_{2}d^{2}(X,x_{0})}]=\infty,\;\exists0<\lambda_{1}<\lambda_{2}<\infty.\label{eq:Gaussian-2}
\end{equation}

Under Condition \ref{cond:convexity}, Condition \ref{cond:integrability}
is in fact equivalent to that $\nu$ admits a standard log-Sobolev
inequality \cite{wang1997logarithmic}. We need the following assumption
when the condition in \eqref{eq:Gaussian-2} holds.

\begin{condition}[Ricci Curvature Bounded Below]\label{cond:Ricci}
$\mathrm{Ric}\ge K$ for some $K\in\mathbb{R}$. \end{condition}
\begin{thm}[Large Deviations Principle for Isoperimetry]
\label{thm:equivalence} Suppose the weighted Riemannian manifold
$(\M,d,\nu)$ satisfies Condition \ref{cond:convexity}. 
\begin{enumerate}
\item Suppose Condition \ref{cond:integrability} holds. Suppose Condition
\ref{cond:Ricci} (or Conditions \ref{cond:covering}, \ref{cond:volume},
or \ref{cond:CD} given in Section \ref{subsec:Variant-Isoperimetry-for})
holds when the condition in \eqref{eq:Gaussian-2} holds. Then, for
any $\alpha\ge0$, 
\begin{equation}
{\displaystyle \Lambda(\alpha)=\sqrt{\breve{\Theta}(\alpha)}}.\label{eq:asympisoper2}
\end{equation}
Moreover, a sequence of conditional typical sets $A_{n,\epsilon}((\mu_{1}^{*},...,\mu_{1}^{*},\mu_{2}^{*},...,\mu_{2}^{*})\|\nu)$
with $\mu_{1}^{*},\mu_{2}^{*}$ respectively appearing $\lfloor n\lambda^{*}\rfloor$
and $n-\lfloor n\lambda^{*}\rfloor$ times asymptotically attains
the limit $\Lambda(\alpha)$ as $n\to\infty$ first and $\epsilon\to0$
then, where $(\lambda^{*},\mu_{1}^{*},\mu_{2}^{*})$ is the optimal
solution in \eqref{eq:Theta_LCE}. 
\item If Condition \ref{cond:integrability} does not hold, then, for any
$\alpha\ge0$, 
\[
{\displaystyle \Lambda(\alpha)=\sqrt{\breve{\Theta}(\alpha)}=0}.
\]
\end{enumerate}
\end{thm}

Although the theorem above is valid for spaces that are general enough,
it does not cover isoperimetric problems for log-concave probability
measures defined on closed convex subsets of Euclidean spaces. To
cover them, we extend the theorem above to an $\mathrm{RCD}(0,\infty)$-subspace
of the Riemannian manifold $\M$. Here, an $\text{RCD}(0,\infty)$-space
(a space satisfying Riemannian Ricci Curvature Dimension bound $\text{RCD}(0,\infty)$)
is a generalization of Riemannian manifolds satisfying Condition \ref{cond:convexity}
to possibly non-smooth setting, e.g., compact subsets of Euclidean
spaces equipped with log-concave probability measures. From the perspective
of the geometry of optimal transport, an $\text{RCD}(0,\infty)$-space
is a Polish metric probability measure space such that infinitesimal
Hilbertianity is satisfied (i.e., the Cheeger energy is a quadratic
form), and meanwhile, the relative entropy functional $D(\cdot\|\nu)$
is not merely $K$-geodesically convex on the Wasserstein space $(\mathscr{P}_{2}(\mathcal{X}),\W)$,
but that this convexity is robust: it extends to hold along all interpolated
measures induced by weighted optimal geodesic plans (or test plans).
See details for the definition of the $\text{RCD}(0,\infty)$-space,
or equivalently, the Riemannian Energy measure space satisfying $\text{BE}(K,\infty)$
in Section \ref{subsec:Riemannian-Energy-Measure} or \cite{AmbrosioGigliSavare14,Ambrosio2015BakryEmery}.
\begin{thm}
\label{thm:approximate}Let $\nu$ be an absolutely continuous probability
measure with density $f$ (w.r.t. Riemannian volume) on a finite-dimensional
smooth complete oriented connected Riemannian manifold $\M$ such
that $\nu$ together with its support forms a $\mathrm{RCD}(0,\infty)$-subspace
of $\M$. Suppose that there exists a sequence of probability measures
$\nu_{m}\in\P_{2}^{\mathrm{ac}}(\M),m\in\mathbb{N}$ with densities
$f_{m}$ (w.r.t. Riemannian volume) satisfying Condition \ref{cond:convexity}
and $f_{m}\ge(1-1/m)f$, as well as Condition \ref{cond:Ricci} if
the condition in \eqref{eq:Gaussian-2} holds. Then, for any $\alpha\ge0$,
${\displaystyle \Lambda(\alpha)=\sqrt{\breve{\Theta}(\alpha)}}.$ 
\end{thm}

\begin{rem}
The convergence of $\{\nu_{m}\}$ here is called ``convergence from
above'' by E. Milman \cite{milman2009role}. In addition, another
notion, ``convergence from within'', was also introduced by him.
Theorem \ref{thm:approximate} still holds if convergence of $\{\nu_{m}\}$
from above is replaced by convergence from within. 
\end{rem}

The equivalence $\Lambda=\sqrt{\breve{\Theta}}$ in the two theorems
above can be alternatively formulated as follows: For any convex $\theta:[0,\infty)\to[0,\infty)$,
\[
{\displaystyle \Lambda(\alpha)\ge\sqrt{\theta(\alpha)}},\;\forall\alpha\ge0\quad\Longleftrightarrow\quad I(\mu\|\nu)\ge\theta(D(\mu\|\nu)),\;\forall\mu\in\P_{2}^{\mathrm{ac}}(\M).
\]
It is also equivalent to that for any $\alpha>0$, 
\begin{equation}
{\displaystyle \lim_{n\to\infty}\frac{I_{n}(a_{n})}{I_{\mathrm{G}}(a_{n})}=\lim_{n\to\infty}\frac{I_{n}(a_{n})}{a_{n}\sqrt{2\ln\frac{1}{a_{n}}}}=\sqrt{\frac{\breve{\Theta}(\alpha)}{2\alpha}}},\label{eq:asympisoper2-2}
\end{equation}
where $a_{n}=e^{-n\alpha}$ and $I_{\mathrm{G}}$ is the isoperimetric
profile for the standard Gaussian measure. That is, as the weighted
volume vanishes exponentially, the asymptotic behavior of the isoperimetric
profile for any product probability measure resembles the one for
the standard Gaussian measure, but with different factors that are
determined by the exponential convergence rate of the weighted volume.
One prototypical case occurs when $\Theta(\alpha)$ grows linearly
in $\alpha$---for instance, $\Theta(\alpha)=2\alpha$ for Gaussian
measures---implying that $I_{n}(a)$ behaves as $C\cdot a\sqrt{\ln\frac{1}{a}}$,
with standard half-spaces likely serving as approximate extremizers
for small weighted volumes. A second prototypical case is when $\Theta(\alpha)$
grows exponentially in $\alpha$, as illustrated in the subsequent
example, which implies that $I_{n}(a)$ behaves as $C\cdot\sqrt{n}\,a^{1-1/n}$,
with geodesic balls acting as approximate extremizers for small weighted
volumes. Theorem \ref{thm:equivalence} provides a rigorous justification
from the perspective of nonlinear log-Sobolev inequalities for why
isoperimetric minimizers of small weighted volumes behave fundamentally
differently across spaces with distinct geometric structures. 
\begin{example}[Log-Concave Probability Measures]
\label{exa:Hypercube-1} All smooth log-concave probability densities
on Euclidean spaces satisfy Conditions \ref{cond:convexity} and \ref{cond:Ricci}.
Moreover, by standard regularization techniques (e.g., the Moreau-Yosida
regularization and convolution with Gaussian kernels), all log-concave
probability densities defined on closed convex subsets of Euclidean
spaces can be approximated by smooth log-concave probability densities
defined on the whole Euclidean spaces in the sense as in Theorem \ref{thm:approximate}.
Moreover, closed convex subsets equipped with log-concave probability
densities satisfy $\mathrm{RCD}(0,\infty)$. Thus, $\Lambda=\sqrt{\breve{\Theta}}$
holds for all (smooth or non-smooth) log-concave probability measures
on Euclidean spaces (supported on the whole spaces or convex subsets).
The standard Gaussian measure is one of the most important examples,
which admits $\breve{\Theta}(\alpha)=\Theta(\alpha)=2\alpha$ and
thus, $\Lambda(\alpha)=\sqrt{2\alpha}$, consistent with the formula
in \eqref{eq:Gaussian}. 
\end{example}

\begin{example}[Bounded Manifolds admit Exponential Growth of $\Theta$]
\label{exa:exponential} Let $D(t)=D(\mu_{t}\|\nu)$, $I(t)=I(\mu_{t}\|\nu)$,
and $N(t)=\exp(-2D(\mu_{t}\|\nu))$ respectively be the relative entropy,
Fisher information, and entropy power along the (generalized) heat
flow $\partial_{t}\rho=L\rho$ with $L=\Delta-\nabla V\cdot\nabla$
and $\mu_{t}=\rho_{t}\nu$. It is easy to see that $\Theta(\alpha)$
grows exponentially as $\alpha\to\infty$, as long as $N(t)$ is concave
in the time $t$ and $\Theta(\alpha_{0})>0$ for some $\alpha_{0}>0$.
This is because, in this case, $D(t)$ continuously decays to zero
as $t\to\infty$ and $N'(t)=2N(t)I(t)$ is nonincreasing, which implies
that for any $\mu_{0}$ satisfying $D(0)\ge\alpha_{0}$, we always
have $N(0)I(0)\ge N(t_{0})I(t_{0})\ge\Theta(\alpha_{0})e^{-2\alpha_{0}}$
with $t_{0}$ being the solution to $D(t_{0})=\alpha_{0}$. Therefore,
$I(0)\ge\Theta(\alpha_{0})e^{2(D(0)-\alpha_{0})}$ for any $\mu_{0}$
satisfying $D(0)\ge\alpha_{0}$, i.e., exponential growth of $\Theta$
(when $\Theta(\alpha_{0})>0$): $\Theta(\alpha)\ge\Theta(\alpha_{0})e^{2(\alpha-\alpha_{0})}$
for all $\alpha\ge\alpha_{0}$. The concavity of the entropy power
holds under the $\mathrm{CD}(0,m)$-condition with $m\in[n,\infty)$
for smooth Riemannian manifolds, as shown by S. Li and X. Li \cite[Theorem 2.2]{li2024on},
and also under the $\mathrm{RCD}(0,m)$-condition with $m\in[n,\infty)$
and $\int_{\mathcal{X}}\left[\frac{|\Delta\rho|^{2}}{\rho}+\frac{|\nabla\Delta\rho|^{2}}{\rho}\right]\d\nu<\infty$
for Polish metric measure spaces with $\rho$ denoting the heat flow,
as shown by X. Li and E. Zhang \cite[Theorem 3.1 and Remark 3.3]{li2025w}.
The condition that $\Theta(\alpha_{0})>0$ for some $\alpha_{0}>0$
is guaranteed as long as a standard log-Sobolev inequality (or a Poincaré
inequality) is admitted. These two conditions, concavity of $N$ and
positivity of $\Theta(\alpha_{0})$, are satisfied by the (normalized)
unweighted measure on a bounded manifold with nonnegative Ricci curvature,
which hence admits exponential growth of $\Theta$ and also exponential
growth of $\Lambda$ (since Conditions \ref{cond:convexity} and \ref{cond:Ricci}
are satisfied in this setting and Theorem \ref{thm:equivalence} is
valid). See other sufficient conditions to ensure exponential growth
of $\Theta$ in \cite[Theorem 6.8.1]{bakry2013analysis}\cite[Theorem 2.5]{li2024on}. 
\end{example}

We next consider a simple but nontrivial case---the uniform measure
on hypercube. 
\begin{example}[Hypercube]
\label{exa:Hypercube} Consider the hypercube $[0,1]^{n}$ (equivalently,
$(0,1)^{n}$) with Euclidean metric and uniform measure. The standard
log-Sobolev inequality and the Poincaré inequality with constants
$K_{LS}=K_{P}=\pi^{2}$ ($K_{LS},K_{P}$ defined later in \eqref{eq:LS}
and \eqref{eq:poincare}) respectively hold in this setting \cite{ghang2014sharp},
which implies $\lim_{\alpha\to0}\frac{\breve{\Theta}(\alpha)}{2\alpha}=\lim_{\alpha\to0}\frac{\Theta(\alpha)}{2\alpha}=\pi^{2}$.
Moreover, the $\mathrm{RCD}(0,m)$-condition holds obviously for $m>n$
and the condition $\int_{[0,1]}\left[\frac{|\Delta\rho|^{2}}{\rho}+\frac{|\nabla\Delta\rho|^{2}}{\rho}\right]\d\nu<\infty$
is satisfied for any $t>0$ for heat flow $\rho$ with the Neumann
boundary condition \cite[Section 4.2]{strauss2007partial} (since
$[0,1]$ is compact, $\rho\in C^{\infty}([0,1])$, and $\rho_{t}>0$
for any $t>0$). Thus, as mentioned in Example \ref{exa:exponential},
we have exponential growth of $\Theta$, i.e., $\Theta(\alpha)\ge2\pi^{2}\alpha_{0}\cdot e^{2(\alpha-\alpha_{0})}\ge\frac{\pi^{2}}{e}\cdot e^{2\alpha}$
(with $\alpha_{0}$ chosen as $1/2$), which further implies exponential
growth of $\Lambda$, i.e., $\Lambda(\alpha)\ge\sqrt{\frac{\pi^{2}}{e}}\cdot e^{\alpha}$.
This lower bound is sharp in the exponent by verifying balls centered
at the vertices of the hypercube, since for these balls with volume
$e^{-n\alpha}$, their surface areas are $(1+o(1))\sqrt{\frac{\pi en}{2}}\cdot e^{\alpha(1-n)}$
as $n\to\infty$, giving $\Lambda(\alpha)\le\sqrt{\frac{\pi e}{2}}\cdot e^{\alpha}$.
In fact, it was already known \cite{ritore2015isoperimetric,glaudo2024isoperimetric}
that given each $n$, for all small volumes, balls centered at the
vertices are isoperimetric minimizers. However, it is unclear whether
``small volume'' here is large enough to include ``exponentially
small volume''. We conjecture the following precise relation holds:
$\Lambda(\alpha)\sim\sqrt{\frac{\pi e}{2}}\cdot e^{\alpha}$, or equivalently,
$\breve{\Theta}(\alpha)\sim\frac{\pi e}{2}\cdot e^{2\alpha}$ as $\alpha\to\infty$,
suggesting that half-normal distributions truncated on $(0,1)$ with
sufficiently small variances approximately saturate nonlinear log-Sobolev
inequalities for large $\alpha$. The factor $\frac{\pi^{2}}{e}$
obtained above is about $0.85\cdot\ensuremath{\frac{\pi e}{2}}$. 
\end{example}

Theorem \ref{thm:equivalence} reveals a precise equivalence between
the asymptotic isoperimetric inequality and the nonlinear log-Sobolev
inequality. It is termed the \textit{large deviations principle for
isoperimetry} for two reasons. First, both this theorem and large
deviations theory focus on sets of exponentially small measures. Second,
isoperimetric minimizers have been proven to be expressed in terms
of $\mathbf{f}$-coordinate half-spaces (or conditional typical sets)
which concern the remote tails of a sum. These remote tails themselves
are the core object of investigation in large deviations theory.

Showing equivalence among inequalities from various fields is an interesting
topic, since it will unveil the intrinsic relationships among diverse
branches of mathematics. However, most of equivalences in the literature
on this topic are known only in a weak sense. In particular, it was
proved by Ledoux \cite{ledoux1994semigroup,ledoux1994simple} on Gaussian
spaces or compact Riemannian manifolds and by Bobkov \cite{bobkov1999isoperimetric}
on Euclidean spaces with log-concave probability measures that for
a fixed $n$, a Gaussian-type isoperimetric inequality 
\[
I_{n}(a)\ge\sqrt{K}I_{\mathrm{G}}(a),\forall a
\]
holds for some $K>0$, if and only if the standard log-Sobolev inequality
holds with some constant $K'>0$. Theorem \ref{thm:equivalence} provides
an equivalence in a stronger sense, which showing a precise equivalence
between the isoperimetric inequality and (nonlinear) log-Sobolev inequality.
As demonstrated in the case of bounded manifolds, Ledoux and Bobkov's
weak equivalence exhibits significant looseness for small $a$ (that
is, in the context of small sets).

Although our equivalence is proven for product spaces, it in fact
implies Ledoux and Bobkov's weak equivalence for non-product spaces.
In particular, by the facts that $I_{1}\ge I_{n}$ and $\M$ is an
arbitrary (non-product) space, if the standard log-Sobolev inequality
holds with some constant $K'>0$, then our equivalence implies that
$\liminf_{a\to0}\frac{I_{1}(a)}{I_{\mathrm{G}}(a)}\ge\liminf_{n\to\infty}\frac{I_{n}(a_{n})}{I_{\mathrm{G}}(a_{n})}=\sqrt{K'},$
where $a_{n}=e^{-n\alpha}$ for any $\alpha>0$. (More details on
the inequality here is given in the following subsubsection.) That
is, $I_{1}(a)\ge\sqrt{K}I_{\mathrm{G}}(a),\forall a$ for some $K>0$,
yielding a Gaussian-type isoperimetric inequality for the space $\M$.
Conversely, if a Gaussian-type isoperimetric inequality for $\M$
holds with constant $K>0$, then by the well known fact that $I_{1}(a)\ge\sqrt{K}I_{\mathrm{G}}(a),\forall a\Longleftrightarrow I_{n}(a)\ge\sqrt{K}I_{\mathrm{G}}(a),\forall a,n$,
our equivalence implies the standard log-Sobolev inequality with the
same constant $K$.

Our equivalence will be applied in the following to compare the optimal
constants in various inequalities.

\subsubsection{Companion of Various Constants}

While Theorem \ref{thm:equivalence} is primarily tailored to subsets
of weighted volumes decaying exponentially in $n$, it admits application
to bounding the isoperimetric profile for weighted volumes decaying
arbitrarily slowly or even constant weighted volumes. In other words,
Theorem \ref{thm:equivalence} can be used to quantitatively compare
the log-Sobolev constant, the isoperimetric constant, and other constants
for various inequalities. To this end, we define the following constants. 
\begin{itemize}
\item $K_{CD}$ denotes the optimal constant $K$ in the Bakry--Émery--Ricci
curvature-dimension condition 
\begin{equation}
\mathrm{Ric}_{g}+\mathrm{Hess}_{g}V\geq Kg.\label{eq:RicHess-1}
\end{equation}
\item $K_{LS}$ denotes the optimal constant $K$ in the log-Sobolev inequality
\begin{equation}
I(\mu\|\nu)\ge2K\cdot D(\mu\|\nu),\;\forall\mu,\label{eq:LS}
\end{equation}
or equivalently, 
\[
K_{LS}=\frac{\breve{\Theta}'(0)}{2}=\inf_{\alpha>0}\frac{\breve{\Theta}(\alpha)}{2\alpha}=\inf_{\alpha>0}\frac{\Theta(\alpha)}{2\alpha},
\]
where $\breve{\Theta}'(0)$ is the right derivative of $\breve{\Theta}$
at $0$. 
\item $K_{LS}^{+}$ denotes 
\begin{equation}
K_{LS}^{+}:=\sup_{\alpha>0}\frac{\breve{\Theta}(\alpha)}{2\alpha}=\lim_{\alpha\to\infty}\frac{\breve{\Theta}(\alpha)}{2\alpha}.\label{eq:-11}
\end{equation}
\item $K_{HC}$ denotes the optimal constant $K$ in the hypercontractivity
inequality for the semigroup $T_{t}=e^{-tL}$ with $L=\Delta-\nabla V\cdot\nabla$,
\[
\|T_{t}f\|_{q}\le\|f\|_{p},\;\forall f,
\]
for all $t\ge\frac{1}{2K}\ln\frac{q-1}{p-1}$, where the norms are
taken w.r.t. $\nu$. 
\item $K_{T}$ denotes the optimal constant $K$ in the transport inequality
\[
\forall\mu\in\P_{2}^{\mathrm{ac}}(\M),\quad\W(\mu,\nu)\leq\sqrt{\frac{2D(\mu\|\nu)}{K}},
\]
where 
\[
\W(\mu,\nu):=\left(\inf_{\pi\in\Pi(\mu,\nu)}\int_{\M\times\M}d^{2}(x,y)\d\pi(x,y)\right)^{1/2}
\]
is the Wasserstein distance of order $2$ and $\Pi(\mu,\nu)$ is the
set of couplings of $\mu,\nu$, i.e., the set of joint distributions
with marginals equal to $\mu,\nu$. 
\item $K_{T}^{+}$ denotes the optimal constant $K$ in the transport inequality
that there is some constant $D_{0}\ge0$ such that for all 
\[
\W(\mu,\nu)\leq\sqrt{\frac{2D(\mu\|\nu)}{K}},\quad\forall\mu\in\P_{2}^{\mathrm{ac}}(\M)\textrm{ s.t. }D(\mu\|\nu)\ge D_{0}.
\]
\item $K_{C}$ denotes the optimal constant $K$ in the dimension-free Gaussian
concentration inequality that for all $n\ge1$ and $r\ge0$, $\nu_{n}((A^{r})^{c})\le e^{-\frac{Kr^{2}}{2}}$
for all $A$ such that $\nu_{n}(A)\ge1/2$. 
\item $K_{C}^{+}$ denotes the optimal constant $K$ in the asymptotic Gaussian
concentration inequality that there is some constant $r_{0}\ge0$
such that for all $n\ge1$ and $r\ge\sqrt{n}r_{0}$, $\nu_{n}((A^{r})^{c})\le e^{-\frac{Kr^{2}}{2}}$
for all $A$ such that $\nu_{n}(A)\ge1/2$. 
\item $K_{IS}$ denotes the optimal constant $K$ in the large deviations
asymptotics of the isoperimetric inequalities 
\[
\liminf_{n\to\infty}\frac{I_{n}(a_{n})}{a_{n}\sqrt{2\ln\frac{1}{a_{n}}}}\ge\sqrt{K},\;\forall\alpha>0,
\]
where $a_{n}=e^{-n\alpha}$. 
\item $K_{IS}^{+}$ denotes the optimal constant $K$ in the following asymptotic
isoperimetric inequality 
\[
\liminf_{a\to0}\frac{I_{n}(a)}{a\sqrt{2\ln\frac{1}{a}}}\ge\sqrt{K},\;\forall n\ge1.
\]
\item $K_{IS}^{-}$ denotes the constant in the dimension-free (also infinite-dimensional)
isoperimetric inequality 
\begin{equation}
\liminf_{a\to0}\frac{I_{\inf}(a)}{a\sqrt{2\ln\frac{1}{a}}}=\sqrt{K_{IS}^{-}},\label{eq:-23}
\end{equation}
where 
\[
I_{\inf}(a):=\lim_{n\to\infty}I_{n}(a)=\inf_{n\ge1}I_{n}(a).
\]
\end{itemize}
From the definitions, the constants $K_{IS}$, $K_{IS}^{+}$, and
$K_{IS}^{-}$ are the asymptotic isoperimetric ratios $\frac{I_{n}(a)}{a\sqrt{2\ln\frac{1}{a}}}$
respectively for exponentially, superexponentially, and subexponentially
small sequences of volumes. We now compare all optimal constants defined
above. 
\begin{thm}
\label{thm:comparison} Suppose the weighted Riemannian manifold $(\M,d,\nu)$
satisfies Condition \ref{cond:convexity} (i.e., $K_{CD}\ge0$). Then,
it holds that 
\begin{align}
K_{CD}\overset{\mathrm{(a)}}{\le}\left(\frac{1+K_{CD}/K_{C}}{2}\right)^{2}K_{C}\overset{\mathrm{(b)}}{\le} & K_{IS}^{-}\overset{\mathrm{(c)}}{\le}K_{IS}\overset{\mathrm{(d)}}{\le}K_{LS}\overset{\mathrm{(e)}}{=}K_{HC}\label{eq:-15}\\
 & \overset{\mathrm{(f)}}{\le}K_{T}\overset{\mathrm{(g)}}{=}K_{C}\overset{\mathrm{(h)}}{\le}K_{C}^{+}\overset{\mathrm{(i)}}{=}K_{T}^{+},\nonumber 
\end{align}
and $K_{LS}\overset{\mathrm{(j)}}{\le}K_{LS}^{+}.$ If additionally,
Condition \ref{cond:Ricci} holds when the condition in \eqref{eq:Gaussian-2}
holds, then the equality in (d) holds and moreover, $K_{LS}^{+}\overset{\mathrm{(k)}}{\le}K_{IS}^{+}.$
In addition, if $K_{CD}$ is equal to any one of $K_{IS}^{-},K_{IS},K_{LS},K_{HC},K_{T},K_{C}$,
then all the inequalities in (a)--(g) of \eqref{eq:-15} become equalities. 
\end{thm}

Inequality (a) is trivial, by noting that $0\le K_{CD}/K_{C}\le1$.
Inequality (c) is due to the fact that $n\mapsto I_{n}(a)$ is nonincreasing.
Equality (d) is implied by Theorem \ref{thm:equivalence}. Equality
(e) is a classic result of Gross \cite{gross1975logarithmic}. The
inequality $K_{LS}\le K_{T}$ (or equivalently, Inequality (f)) was
proven by Otto and Villani \cite{otto2000generalization}. Equality
(g) is also well known; see e.g. \cite[Remark 22.23]{villani2008optimal}.
Inequalities (h) and (j) follow by definition. Equality (i) follows
similarly to (g). Inequality (k) follows by Theorem \ref{thm:equivalence},
with a detailed proof given in Appendix C. So, it only remains to
prove Inequality (b). This is done in Appendix C. In addition, a weaker
version of (b), $\left(\frac{1+K_{CD}/K_{C}}{2}\right)^{2}K_{C}\le K_{LS}$,
was already known; see Theorem 22.21 of \cite{villani2008optimal}.
However, we do not know if this is equivalent to the inequality (b)
since we do not know if $K_{IS}^{-}=K_{IS}$ holds. 
\begin{example}[Hypercube]
We continue considering the hypercube described in Example \ref{exa:Hypercube}.
Statements in Example \ref{exa:Hypercube} imply that for this case,
\[
K_{CD}=0,\;K_{IS}=K_{LS}=K_{HC}=K_{T}=K_{C}=\pi^{2},\;K_{LS}^{+}=K_{C}^{+}=K_{T}^{+}=K_{IS}^{+}=\infty,
\]
where the density $1+\epsilon\cos\pi x$ asymptotically attains the
constants $K_{LS}$ and $K_{T}$ as $\epsilon\to0$. Moreover, it
holds that $\sqrt{2\pi}\le K_{IS}^{-}\le\pi^{2}$; the lower bound
will be explained in Example \ref{exa:We-continue-considering}. However,
the exact value of $K_{IS}^{-}$ is still unknown. 
\end{example}

An open problem on this topic is the following one. 
\begin{problem}
\label{prob:Assume-that-}Under Condition \ref{cond:convexity}, is
it true that $K_{IS}^{-}=K_{LS}$? 
\end{problem}

\subsubsection{Central Limit Regime }

We now consider the asymptotics of the isoperimetric profile for fixed
weighted volume as $n\to\infty$. Let $I_{\infty}$ be the isoperimetric
profile defined for the infinite dimensional product space $\M^{\infty}$
equipped with the product topology and the product measure $\nu_{\infty}:=\nu^{\otimes\infty}$,
but the enlargement of a set is defined under $d_{\infty}(\mathbf{x},\mathbf{y})=\sqrt{\sum_{i=1}^{\infty}d(x_{i},y_{i})^{2}}$
(although it might diverse to infinity for two arbitrary points).
Then, it is obvious that $I_{\inf}(a)\ge I_{\infty}(a)$. In the following,
we show that this is actually an equality. That is, the isoperimetric
profile $I_{\infty}$ on the infinite dimensional space is the pointwise
limit of the isoperimetric profiles on finite dimensional spaces.
The proof is provided in Appendix D. 
\begin{thm}
\label{thm:I_inf}Assume that $I_{\inf}$ is continuous (e.g., Condition
\ref{cond:convexity} holds). Then, $I_{\inf}(a)=I_{\infty}(a)$ for
all $a\in[0,1]$. 
\end{thm}

A well-known lower bound on $I_{\inf}$ is that 
\begin{equation}
I_{\inf}(a)\ge K_{0}\cdot I_{\mathrm{G}}(a),\;\forall a\in[0,1],\label{eq:-24}
\end{equation}
where $I_{\mathrm{G}}$ is the Gaussian isoperimetric profile and
$K_{0}:=\inf_{a\in(0,1)}\frac{I_{\nu}(a)}{I_{\mathrm{G}}(a)}$ \cite{bobkov1997isoperimetric,bakry1996levy}.
An upper bound can be obtained by considering the $\mathbf{f}$-coordinate
half-space given in \eqref{eq:halfspace}, as shown in the following
theorem. The proof is based on the coarea formula and the central
limit theorem, which is given in Appendix E. 
\begin{thm}[Upper Bound for Isoperimetry in Central Limit Regime]
\label{thm:clt} Assume that $I_{\inf}$ is continuous (e.g., Condition
\ref{cond:convexity} holds). Then, for any $a\in[0,1]$, 
\begin{equation}
I_{\inf}(a)\le\sqrt{K_{P}}\cdot I_{\mathrm{G}}(a),\label{eq:-27}
\end{equation}
where $K_{P}$ is the spectral gap (the reciprocal of the Poincaré
constant) for $\nu$, i.e., the optimal constant in 
\begin{equation}
K_{P}\cdot\mathrm{Var}_{\nu}(f)\leq\mathbb{E}_{\nu}\left[\|\nabla f\|^{2}\right]\label{eq:poincare}
\end{equation}
for all $f$ in the Sobolev space $W^{1,2}(\M)$. 
\end{thm}

As a related result, the Poincaré constant and the Cheeger constant
are known to be closely related; see e.g., \cite{klartag2025isoperimetric}.
The Cheeger constant characterizes the isoperimetric behavior exclusively
for large subsets, whereas the inequality in \eqref{eq:-27} captures
isoperimetry for all subsets without restriction.

The upper bound $\sqrt{K_{P}}I_{\mathrm{G}}(a)$ in \eqref{eq:-27}
can be used to bound $K_{IS}^{-}$, namely, obtaining that $K_{IS}^{-}\le K_{P}$.
However, this bound is worse than the upper bound $K_{LS}$, since
in general $K_{LS}\le K_{P}$. Note that by perturbation analysis,
one can see that 
\[
K_{P}=\liminf_{\alpha\to0}\frac{\Theta(\alpha)}{2\alpha}.
\]
Thus, $K_{LS}=K_{P}$ if $\Theta(\alpha)\ge2K_{P}\alpha$ (e.g., if
$\Theta$ is convex), and otherwise, $K_{LS}<K_{P}$ (e.g., for exponential
measures \cite[Example 21.19]{villani2008optimal}). For the latter
case, the upper bound $\sqrt{K_{P}}I_{\mathrm{G}}(a)$ is not tight
for sufficiently small $a$. Combining the best known lower bound
in \eqref{eq:-24}, the upper bound in \eqref{eq:-27}, and the trivial
upper bound $I_{1}$ yields 
\[
K_{0}I_{\mathrm{G}}(a)\le I_{\inf}(a)\le I_{1}(a)\wedge\sqrt{K_{P}}I_{\mathrm{G}}(a).
\]

\begin{example}[Hypercube]
\label{exa:We-continue-considering}We continue considering the hypercube
described in Example \ref{exa:Hypercube}. Applying the inequalities
above to the hypercube $(0,1)^{n}$ yields $\sqrt{2\pi}I_{\mathrm{G}}(a)\le I_{\inf}(a)\le1\wedge\pi I_{\mathrm{G}}(a)$.
As shown in \cite{glaudo2024isoperimetric}, the lower bound is not
tight. 
\end{example}

A challenging problem is to exactly determine the function $I_{\inf}$
(or equivalently, $I_{\infty}$). This problem is a commonly recognized
open one. 
\begin{problem}
Under Condition \ref{cond:convexity}, what is the explicit expression
of $I_{\inf}$? 
\end{problem}

\subsection{Related Works}
\begin{itemize}
\item \textbf{Isoperimetry} 
\end{itemize}
In the one-dimensional setting, Bobkov \cite{bobkov1996extremal}
established that half-lines are extremal sets in the isoperimetric
inequality for log-concave probability measures. Extending to higher-dimensional
structures, Bobkov and Houdré \cite{bobkov1997some} further explored
the isoperimetric constant and log-Sobolev inequalities in product
spaces, laying a foundation for multi-dimensional isoperimetric analysis.
The optimality of standard half-spaces--- a core topic in isoperimetry---
has been systematically studied by multiple scholars: Sudakov and
Tsirel'son \cite{sudakov1978extremal} initiated early investigations,
Bobkov \cite{bobkov1996extremal} deepened this topic by addressing
the isoperimetric problem for log-concave product measures on $\mathbb{R}^{n}$
equipped with the uniform distance, and explicitly provided necessary
and sufficient conditions for standard half-spaces to be extremal,
and Barthe \cite{barthe2001extremal} investigated the extremal properties
of central half-spaces (with measure $1/2$) for product probability
measures on Riemannian manifolds equipped with the geodesic distance.
It should be noting that the isoperimetric problem under the geodesic
distance and the one under the uniform distance are significantly
different. When equipped with the uniform distance, the asymptotics
of the isoperimetric profile has been characterized by Bobkov in \cite{bobkov1997isoperimetric3}.
However, for the geodesic distance as in our setup, the asymptotics
is still widely unknown except for special cases, e.g., the Gaussian
setting. Roberto \cite{roberto2010isoperimetry} offers a critical
survey for a comprehensive overview of isoperimetry in product spaces.
See also other related works or survey in \cite{barthe2000some,barthe2002log,ros2005isoperimetric,milman2009role}.
Furthermore, the isoperimery for small sets was already investigated
in the literature; see, e.g., \cite{Leonardi2022isoperimetric} for
the continuous setting and \cite{raghavendra2010graph,ODonnell14analysisof}
for the discrete setting. In the discrete setting, the isoperimery
for small sets is known as the small set expansion and has important
applications in complexity theory \cite{raghavendra2010graph}. It
should be noted that exact characterization of the isoperimetric profile
remains widely open, and previously so does its asymptotics. 
\begin{itemize}
\item \textbf{Concentration of Measure} 
\end{itemize}
The concentration of measure in a probability metric space describes
a key phenomenon: a slight enlargement of any measurable set with
non-negligible probability will always result in a set with large
probability. In functional analysis terms, this is equivalent to the
fact that the value of any Lipschitz function is concentrated around
its median. The modern study of this phenomenon began in the early
1970s, when V. Milman pioneered its application to the asymptotic
geometry of Banach spaces. Subsequent decades saw in-depth exploration
by scholars including Gromov, Maurey, Pisier, Schechtman, Talagrand,
and Ledoux, solidifying its role in analysis and probability. A critical
advancement came from Talagrand \cite{talagrand1995concentration},
who studied concentration of measure in product spaces equipped with
product probability measures and derived a suite of sharp concentration
inequalities. In information theory, this phenomenon is known as the
blowing-up lemma \cite{Ahls76,Marton86}, which Gács, Ahlswede, and
Körner leveraged to prove strong converses for two classic coding
problems.

Concentration of measure has natural connections to isoperimetric
inequalities. In fact, it can be seen as a variant version of isoperimetric
problem concerning on sets with probability measure $1/2$ in which
the boundary of a set is set to the $r$-neighbourhood of the set
with $r>0$. A landmark methodological contribution in concentration
of measure was made by Marton \cite{Marton86}, who was the first
to introduce information-theoretic techniques (especially transport-entropy
inequalities) to the study of concentration of measure, yielding an
elegant and concise proof for a dimension-free bound on concentration
of measure. Talagrand extended this idea by developing a new transport-entropy
inequality, adapting it to Gaussian measures and the Euclidean metric
\cite{talagrand1996transportation}; this argument has since become
a standard reference, featured in prominent textbooks \cite{ledoux2001concentration,RagSason,villani2003topics}.
Gozlan and Léonard further refined this approach by replacing the
``linear'' transport-entropy inequality in Marton's argument with
a ``nonlinear'' version, obtaining a dimension-free sharp bound
on concentration of measure---exponentially tight, as the exponent
of the bound is asymptotically attained \cite{gozlan2007large}. Prior
to Gozlan and Léonard's work, Alon et al. \cite{alon1998asymptotic}
are the first to characterize the sharp convergence exponent for concentration
of measure, but limited to finite spaces. Gozlan \cite{gozlan2009characterization}
also used Marton's framework to prove the equivalence between the
Gaussian bound and Talagrand's transport-entropy inequality. Additionally,
Dembo \cite{dembo1997information} proposed a new class of transport-entropy
inequalities, which he used them to recover and generalize several
of Talagrand's key results in \cite{talagrand1995concentration}.
Furthermore, E. Milman \cite{milman2010isoperimetric} established
an equivalence between isoperimetry and concentration of measure.
There are many existing works investigating isoperimetric inequalities
or concentration of measure in the discrete setting, e.g., on various
graphs, e.g., \cite{margulis1974veroyatnostniye,Ahls76,bollobas1986combinatorics,diskin2024isoperimetry}.

Ahlswede and Zhang \cite{ahlswede1999asymptotical} and Yu \cite{yu2024exact}
contributed to the study of isoperimetry with thick boundaries for
exponentially small sets. This isoperimetric variant refines the concentration
of measure phenomenon by relaxing the set size requirement to allow
arbitrarily exponentially small measures, yet it behaves fundamentally
differently from---and is far more intricate than---concentration
of measure. This is because, in contrast to the latter, the isoperimetric
variant admits no dimension-independent bounds. Ahlswede and Zhang
\cite{ahlswede1999asymptotical} focused on finite spaces for which
a key tool called the inherently typical subset lemma was developed
by them. Yu \cite{yu2024exact} generalized their result to Polish
metric spaces (complete, separable metric spaces), expanding the problem's
applicability, which plays a key role in our proof in the present
paper. 
\begin{itemize}
\item \textbf{Log-Sobolev Inequalities} 
\end{itemize}
Ledoux \cite{ledoux1994simple,ledoux2004spectral} proposed a semigroup
approach to derive isoperimetric inequalities from hypercontractivity
(which is equivalent to log-Sobolev inequalities). The semigroup approach
can be used to establish isoperimetric inequalities for the whole
range of volumes, but the constant obtained via this method is in
general suboptimal. In contrast, the constant in our work is asymptotically
optimal. Otto and Villani \cite{otto2000generalization} established
that log-Sobolev inequalities imply transport-entropy inequalities,
bridging log-Sobolev theory with concentration of measure. Building
on this, Ledoux \cite{ledoux2011concentration} (alongside Milman
\cite{milman2010isoperimetric}) used a semigroup approach to derive
isoperimetric inequalities from concentration inequalities (equivalently,
transport-entropy inequalities). As a nonlinear log-Sobolev inequality
for the standard heat semigroups (with Lebesgue measures as stationary
measures), the entropic isoperimetric inequality was investigated
in \cite{costa1984similarity,dembo1991information} which is in fact
equivalent to the standard log-Sobolev inequality for Ornstein--Uhlenbeck
semigroups (with Gaussian measures as stationary measures). Nonlinear
log-Sobolev inequalities for general heat (diffusion) semigroups were
investigated by Bakry \cite{bakry2004functional} (called the entropy-energy
inequality therein), F. Y. Wang \cite{wang2008generalized}, Bakry,
Gentil, and Ledoux \cite{bakry2013analysis}, as well as Polyanskiy
and Samorodnitsky \cite{polyanskiy2019improved}. Specifically, F.
Y. Wang \cite{wang2008generalized} showed that nonlinear log-Sobolev
inequalities imply nonlinear transport-entropy inequalities, extending
Otto and Villani's linear version to the nonlinear setting. Bakry
\cite{bakry2004functional}, Bakry, Gentil, and Ledoux \cite{bakry2013analysis},
and Polyanskiy and Samorodnitsky \cite{polyanskiy2019improved} applied
nonlinear log-Sobolev inequalities to strengthen hypercontractive
inequalities. Though nonlinear Sobolev inequalities have been studied
in the literature not directly to address asymptotic isoperimetry,
an exact equivalence between these two concepts does exist---one
we establish in the present work. Furthermore, equivalence between
isoperimetry and Sobolev-type inequalities was investigated in \cite{BobkovHoudre1997}.

\subsection{Proof Idea and Paper Organization}

\begin{figure}
\centering

\tikzset{every picture/.style={line width=0.75pt}} 
\begin{tikzpicture}[x=0.75pt,y=0.75pt,yscale=-1,xscale=1] 
\begin{scope}[scale=0.8]
\draw   (49,386.67) .. controls (49,375.62) and (71.18,366.67) .. (98.55,366.67) .. controls (125.92,366.67) and (148.1,375.62) .. (148.1,386.67) .. controls (148.1,397.71) and (125.92,406.67) .. (98.55,406.67) .. controls (71.18,406.67) and (49,397.71) .. (49,386.67) -- cycle ; 
\draw   (338.55,386.67) .. controls (338.55,375.62) and (367.42,366.67) .. (403.05,366.67) .. controls (438.67,366.67) and (467.55,375.62) .. (467.55,386.67) .. controls (467.55,397.71) and (438.67,406.67) .. (403.05,406.67) .. controls (367.42,406.67) and (338.55,397.71) .. (338.55,386.67) -- cycle ; 
\draw   (194.5,386.67) .. controls (194.5,375.62) and (216.68,366.67) .. (244.05,366.67) .. controls (271.42,366.67) and (293.6,375.62) .. (293.6,386.67) .. controls (293.6,397.71) and (271.42,406.67) .. (244.05,406.67) .. controls (216.68,406.67) and (194.5,397.71) .. (194.5,386.67) -- cycle ; 
\draw   (510.5,386.67) .. controls (510.5,375.62) and (532.68,366.67) .. (560.05,366.67) .. controls (587.42,366.67) and (609.6,375.62) .. (609.6,386.67) .. controls (609.6,397.71) and (587.42,406.67) .. (560.05,406.67) .. controls (532.68,406.67) and (510.5,397.71) .. (510.5,386.67) -- cycle ; 
\draw    (148.1,386.67) -- (192.5,386.67) ;
\draw [shift={(194.5,386.67)}, rotate = 180] [color={rgb, 255:red, 0; green, 0; blue, 0 }  ][line width=0.75]    (10.93,-3.29) .. controls (6.95,-1.4) and (3.31,-0.3) .. (0,0) .. controls (3.31,0.3) and (6.95,1.4) .. (10.93,3.29)   ; 
\draw    (467.55,386.42) -- (508.5,386.89) ;
\draw [shift={(510.5,386.92)}, rotate = 180.67] [color={rgb, 255:red, 0; green, 0; blue, 0 }  ][line width=0.75]    (10.93,-3.29) .. controls (6.95,-1.4) and (3.31,-0.3) .. (0,0) .. controls (3.31,0.3) and (6.95,1.4) .. (10.93,3.29)   ; 
\draw    (293.6,386.67) -- (336.55,386.67) ;
\draw [shift={(338.55,386.67)}, rotate = 180] [color={rgb, 255:red, 0; green, 0; blue, 0 }  ][line width=0.75]    (10.93,-3.29) .. controls (6.95,-1.4) and (3.31,-0.3) .. (0,0) .. controls (3.31,0.3) and (6.95,1.4) .. (10.93,3.29)   ; 
\draw    (560.05,407.17) .. controls (524.48,459.34) and (150.57,462.57) .. (99.3,407.5) ;
\draw [shift={(98.55,406.67)}, rotate = 49.22] [color={rgb, 255:red, 0; green, 0; blue, 0 }  ][line width=0.75]    (10.93,-3.29) .. controls (6.95,-1.4) and (3.31,-0.3) .. (0,0) .. controls (3.31,0.3) and (6.95,1.4) .. (10.93,3.29)   ;
\draw (54.5,375.5) node [anchor=north west][inner sep=0.75pt]   [align=left] {Log-Sobolev }; 
\draw (357.5,366.5) node [anchor=north west][inner sep=0.75pt]   [align=left] {\begin{minipage}[lt]{58.84pt}\setlength\topsep{0pt} \begin{center} Variant\\Isoperimetry \end{center}
\end{minipage}}; 
\draw (209.5,375.5) node [anchor=north west][inner sep=0.75pt]   [align=left] {Transport}; 
\draw (517,375.5) node [anchor=north west][inner sep=0.75pt]   [align=left] {Isoperimetry}; 
\draw (153,362.5) node [anchor=north west][inner sep=0.75pt]   [align=left] {\begin{minipage}[lt]{16.89pt}\setlength\topsep{0pt} \begin{center} OT \end{center}
\end{minipage}}; 
\draw (304,361.5) node [anchor=north west][inner sep=0.75pt]   [align=left] {\begin{minipage}[lt]{11.79pt}\setlength\topsep{0pt} \begin{center} IT \end{center}
\end{minipage}}; 
\draw (468,359) node [anchor=north west][inner sep=0.75pt]   [align=left] {\begin{minipage}[lt]{25.39pt}\setlength\topsep{0pt} \begin{center} GMT \end{center}
\end{minipage}}; 
\draw (261,451.5) node [anchor=north west][inner sep=0.75pt]   [align=left] {\begin{minipage}[lt]{80pt}\setlength\topsep{0pt} \begin{center} IT + OT or GMT \end{center}
\end{minipage}};
\end{scope}
\end{tikzpicture}\caption{Proof strategy of Theorem \ref{thm:equivalence}, where OT, IT, and
GMT respectively refer to optimal transport, information theory, and
geometric measure theory. }\label{fig:Proof-strategy-of}
\end{figure}
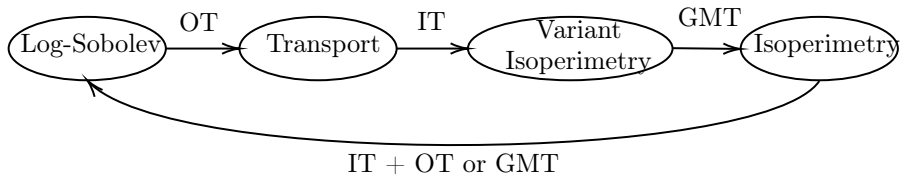

Ledoux's semigroup method, as developed in \cite{ledoux1994simple,ledoux2004spectral},
seems insufficient for establishing Theorem \ref{thm:equivalence}.
This limitation originates from the core strategy of the method: it
transforms the isoperimetric problem into a ``soft boundary'' variant
known as the noise stability problem. However, obtaining a precise
solution to the former demands an exact (or sufficiently precise,
not merely asymptotic) solution to the latter---an outcome that remains
elusive in general. While asymptotically optimal solutions for the
noise stability problem are well-established \cite{ODonnell14analysisof,kirshner2021moment,yu2024graphs,yu2021strong_article},
an exact resolution has yet to be achieved. Accordingly, a novel approach
is required to advance the proof of Theorem \ref{thm:equivalence}.

The heuristic reasoning behind our proof idea is as follows. Given
an arbitrary set in a high-dimensional space, due to lack of structures,
its (weighted) surface area does not admit any simple formulas and
thus, is difficult to analyze. To resolve this difficulty, a possible
way is to tailor the set to a new one that is ``regular'' in a certain
sense, so that the surface area of the resulting set admits a simple
formula. However, this is difficult to realize in a direct way, since
the surface area is very sensitive to the shape of the set, and thus
any subtle change of the set could lead to drastic change in surface
area.

Our proof idea of Statement 1 of Theorem \ref{thm:equivalence} is
as follows, which is new and totally different from the semigroup
approach. Firstly, using a generalized Heintze-Karcher theorem \cite{morgan2005manifolds}
from geometric measure theory, we convert the standard isoperimetric
problem into a variant version in which the ``perimeter'' is measured
by the size of a thick shell of the set. Secondly, using information-theoretic
tools---the inherently typical subset lemma \cite{ahlswede1997identification}
and its consequence \cite{yu2024exact}---as well as covering arguments,
we tailor the set into a ``regular'' one the measure of whose thick
shell admits a simple formula, which is expressed in terms of nonlinear
transport-entropy inequality. The feasibility of this step is due
to the fact that the size of thick shell is robust to any slight change
of the set. Lastly, by establishing a nonlinear transport inequality
analogous to \cite{wang2008generalized} using gradient flow (heat
flow) from optimal transport \cite{villani2008optimal}, we simplify
the resulting formula and express it in terms of nonlinear log-Sobolev
inequality, leading to the equivalence between the standard isoperimetric
problem and the nonlinear log-Sobolev inequality.

Our specific strategy is illustrated in Fig. \ref{fig:Proof-strategy-of},
which is described in the order opposite to the one given above. Specifically,
using tools from optimal transport, in Section \ref{sec:From-Log-Sobolev-to}
we first deduce a specific form of nonlinear transport inequality
from the nonlinear log-Sobolev inequality. By tools from information
theory as well as covering and heat flow arguments, we show in Section
\ref{sec:From-Talagrand-} that the nonlinear transport inequality
in turn implies a variant isoperimetric inequality---the isoperimetric
inequality with thick boundaries. Lastly, by tools from geometric
measure theory, we show in Section \ref{sec:From-Variant-Isoperimetry}
that the variant isoperimetric inequality implies the original isoperimetric
inequality. Putting everything together, in Section \ref{sec:Combining-All-Above}
we deduce that the nonlinear log-Sobolev inequality implies the isoperimetric
inequality. As for the other direction, we provide two proofs in Section
\ref{sec:From-Isoperimetry-to}: one is based on information theory
and optimal transport, and the other is based on geometric measure
theory.

We prove Statement 2 of Theorem \ref{thm:equivalence}, Theorem \ref{thm:approximate},
Theorem \ref{thm:comparison}, Theorem \ref{thm:I_inf}, and Theorem
\ref{thm:clt} respectively in Appendices A--E.

\section{From Log-Sobolev to Transport}\label{sec:From-Log-Sobolev-to}

In this section, we connect log-Sobolev inequality and transport-entropy
inequality. Let $(\mathcal{X},d,\nu)$ be a Polish (complete and separable)
metric probability measure space, equipped with the $\nu$-completion
of the Borel $\sigma$-algebra.

We now need some metric notions; refer to \cite{ambrosio2014calculus,gigli2013log}
for details. A curve $\gamma:[0,\infty)\to\X$ is said to be absolutely
continuous if there exists $g\in L^{1}(0,\infty)$ such that 
\begin{equation}
d(\gamma_{s},\gamma_{t})\leq\int_{s}^{t}g(r)\,\d r,\quad\forall0\le s<t<\infty.\label{eq:-2}
\end{equation}
In this case, the metric speed of $\gamma$ is well-defined for a.e.
$t$ by 
\[
|\dot{\gamma}_{t}|:=\lim_{h\to0}\frac{d(\gamma_{t+h},\gamma_{t})}{|h|}.
\]
It turns out that $|\dot{\gamma}|\in L^{1}(0,1)$ and this is the
minimal $L^{1}$ function for which \eqref{eq:-2} holds.

Given a function $f:X\to\mathbb{R}$, the local Lipschitz constant
$|\nabla f|:\X\to[0,\infty]$ is defined by 
\[
|\nabla f|(x):=\limsup_{y\to x}\frac{|f(x)-f(y)|}{d(x,y)}.
\]
Define the Cheeger energy functional $\operatorname{Ch}:L^{2}(\X,\nu)\to[0,\infty]$
by 
\begin{equation}
\operatorname{Ch}(f):=\inf\liminf_{n\to\infty}\frac{1}{2}\int|\nabla f_{n}|^{2}\,\d\nu,\label{eq:Ch}
\end{equation}
where the infimum is taken among all sequences $(f_{n})$ of Lipschitz
functions converging to $f$ in $L^{2}(\X,\nu)$. Then the Sobolev
space $W^{1,2}(\mathcal{X},d,\nu)$ is defined as the set of $f\in L^{2}(\X,\nu)$
such that $\operatorname{Ch}(f)<\infty$ endowed with the norm 
\begin{equation}
\|f\|_{W^{1,2}}:=\|f\|_{L^{2}}^{2}+2\operatorname{Ch}(f).\label{eq:-32}
\end{equation}

A function $G\in L^{2}(\X,\nu)$ is said to be a relaxed gradient
of $f\in L^{2}(\X,\nu)$ if there exist Lipschitz functions $f_{n}\in L^{2}(\X,\nu)$
such that: 
\begin{itemize}
\item $f_{n}\to f$ in $L^{2}(\X,\nu)$ and $|\nabla f_{n}|$ weakly converge
to $\tilde{G}$ in $L^{2}(\X,\nu)$; 
\item $\tilde{G}\leq G$ $\nu$-a.e. in $\X$. 
\end{itemize}
A function $G$ is said to be the minimal relaxed gradient of $f$
if its $L^{2}(\X,\nu)$ norm is minimal among relaxed gradients. We
shall denote by $|\nabla f|_{*}$ the minimal relaxed gradient. In
terms of the minimal relaxed gradient, the Cheeger energy can be equivalently
expressed as 
\[
\operatorname{Ch}(f)=\frac{1}{2}\int|\nabla f|_{*}^{2}\,\d\nu.
\]

For a probability measure $\mu$ defined on $\mathcal{X}$ with density
$f$ w.r.t. $\nu$, the Fisher information is defined as 
\[
I(\mu\|\nu):=\int_{\{f>0\}}\frac{|\nabla f|_{*}^{2}}{f}\d\nu,
\]
which reduces to the definition in when the space is a smooth Riemannian
manifold. Recall the definition of the relative entropy: 
\begin{align*}
D(\mu\|\nu) & :=\int f\ln f\d\nu.
\end{align*}

We first the following equivalent formulations of the nonlinear log-Sobolev
inequality, which is a nonlinear analogue of the linear version given
in \cite[Proposition 1]{gigli2013log}. The proof is almost same as
the linear version, and omitted here. 
\begin{prop}[Equivalent formulations of the nonlinear log Sobolev inequality \cite{gigli2013log}]
Let $(\X,d,\nu)$ be a Polish metric measure space and let $\theta$
be a continuous function. Then the following are equivalent: 
\begin{itemize}
\item For any Lipschitz function $f:\X\to[0,\infty)$ with $\int f\,\d\nu=1$,
it holds 
\[
\theta(\int f\ln f\,\d\nu)\leq\int_{\{f>0\}}\frac{|\nabla f|^{2}}{f}\,\d\nu.
\]
\item For any non-negative $f\in W^{1,2}(\X,d,\nu)$ with $\int f\,\d\nu=1$,
it holds 
\[
\theta(\int f\ln f\,\d\nu)\leq\int_{\{f>0\}}\frac{|\nabla f|_{*}^{2}}{f}\,\d\nu.
\]
\end{itemize}
\end{prop}

Given $f\in W^{1,2}(\X,d,\nu)$, we write $\partial^{-}\mathrm{Ch}(f)\subset L^{2}(\X,\nu)$
for the subdifferential at $f$ of the restriction to $L^{2}(\X,\nu)$
of Cheeger's energy, namely, $\xi\in\partial^{-}\mathrm{Ch}(f)$ if
and only if 
\[
\mathrm{Ch}(g)\ge\mathrm{Ch}(f)+\int\xi(g-f)\,\mathrm{d}\nu\quad\forall g\in L^{2}(\X,\nu).
\]
Notice that the functional $\operatorname{Ch}:L^{2}(\X,\nu)\to[0,\infty]$
is convex, lower semicontinuous and with dense domain, so that the
standard theory of gradient flows on Hilbert space applies. This means
that \cite[Theorem 2.13]{AmbrosioGigliSavare14} for any $f\in L^{2}(\X,\nu)$
there exists a unique locally absolutely continuous curve $(f_{t})\subset L^{2}(\X,\nu)$
on $[0,\infty)$ such that $f_{0}=f$ and it holds 
\begin{equation}
\frac{\mathrm{d}}{\mathrm{d}t}f_{t}\in-\partial^{-}\operatorname{Ch}(f_{t}),\quad\text{a.e. }t>0,\label{eq:gradCh}
\end{equation}
which is the gradient flow of the Cheeger energy. The operator $P_{t}$
given by $f_{0}\mapsto f_{t}$ is known as the heat semigroup in $L^{2}(\X,\nu)$.

In \cite{ambrosio2014calculus} properties of this gradient flow have
been studied extensively, especially in connection with concepts coming
from optimal transport theory like Wasserstein distance and relative
entropy. In particular, the main results of \cite{ambrosio2014calculus}
are critical in the identification of the gradient flows for the Cheeger
energy in $L^{2}(\X,\nu)$ and for the entropy functional in $(\mathscr{P}_{2}(\X),\W)$.

It is well known that Sobolev-type inequalities (including log-Sobolev
inequalities) imply Talagrand's transport inequalities, as shown by
Otto and Villani \cite{otto2000generalization,villani2008optimal}
on Euclidean spaces and Riemannian manifolds and extended by Gigli
and Ledoux \cite{gigli2013log} to Polish metric measure spaces. A
nonlinear analogue of this result was given by F. Y. Wang \cite[Theorem 2.1]{wang2008generalized}
which is proven by extending Otto and Villani's idea. Specifically,
F. Y. Wang showed that if a weighted manifold $(\M,\nu)$ admits a
$\theta$-nonlinear log-Sobolev inequality, then the following nonlinear
transport inequality holds: 
\begin{equation}
\W\left(\mu,\nu\right)\le\Upsilon(0,D(\mu\|\nu)),\;\forall\mu\in\P_{2}^{\mathrm{ac}}(\M),\label{eq:-66-1}
\end{equation}
where 
\[
\Upsilon(s,\alpha):=\int_{s}^{\alpha}\frac{1}{\sqrt{\theta(r)}}\d r.
\]
In the present paper, we require a variant of F. Y. Wang's result.
Given $\mu_{W}$, we consider the conditional Wasserstein metric $\W\left(\mu_{X|W},\mu_{Y|W}|\mu_{W}\right)$
as a distance between conditional probabilities $\mu_{X|W}$ and $\mu_{Y|W}$. 
\begin{thm}[From Nonlinear Log-Sobolev to Nonlinear Transport]
\label{thm:Sobolev-Talagrand} Let $(\mathcal{X},d)$ be a Polish
(complete and separable) metric space and $\nu$ is a Borel probability
measure on $\mathcal{X}$ which admits a $\theta$-nonlinear log-Sobolev
inequality for some function $\theta$. Let $\mu_{X_{0}W}$ be a joint
probability measure such that $\mu_{X_{0}}=f_{0}\nu\in\P_{2}^{\mathrm{ac}}(\mathcal{X})$
and $D(\mu_{X_{0}|W}\|\nu|\mu_{W})<\infty$. Let $\mu_{X_{t}}=f_{t}\nu$
be the heat flow with initial data $\mu_{X_{0}}$ and denote the induced
transition probability $\mu_{X_{t}|X_{0}}$. Let $\mu_{X_{t}X_{0}W}=\mu_{X_{0}W}\mu_{X_{t}|X_{0}}$.
Then, the curve $t\mapsto\mu_{X_{t}|W}$ is absolutely continuous
on $[0,\infty)$ with respect to the conditional Wasserstein metric
$\W$ and for its metric speed $|\dot{\mu}_{X_{t}|W}|$, it holds
that $|\dot{\mu}_{X_{t}|W}|^{2}\le I(\mu_{X_{t}|W}\|\nu|\mu_{W})$
for a.e. $t$. Moreover, the map $t\mapsto D(\mu_{X_{t}|W}\|\nu|\mu_{W})$
is absolutely continuous and non-increasing on $[0,\infty)$, and
\begin{equation}
\W\left(\mu_{X_{0}|W},\mu_{X_{t}|W}|\mu_{W}\right)\le\Upsilon(D(\mu_{X_{t}|W}\|\nu|\mu_{W}),D(\mu_{X_{0}|W}\|\nu|\mu_{W})).\label{eq:-33}
\end{equation}
\end{thm}

\begin{rem}
If $W$ is constant, then the unconditional version is obtained. This
theorem can be generalized to the $\Phi$-entropy version by incorporating
some ideas from \cite{villani2008optimal} to the proof. 
\end{rem}

\begin{proof}
Since $\mu_{X_{0}}\in\P_{2}^{\mathrm{ac}}(\mathcal{X})$, we have
$\mu_{X_{0}|W=w}\in\P_{2}^{\mathrm{ac}}(\mathcal{X})$ for $\mu_{W}$-a.e.
$w$. Observe that for $0\le s<t\le1$, 
\begin{align}
\W^{2}\left(\mu_{X_{s}|W},\mu_{X_{t}|W}|\mu_{W}\right) & =\mathbb{E}_{\mu_{W}}[\W^{2}\left(\mu_{X_{s}|W},\mu_{X_{t}|W}\right)]\nonumber \\
 & \le(t-s)\mathbb{E}_{\mu_{W}}[\int_{s}^{t}I(\mu_{X_{r}|W}\|\nu)\d r]\nonumber \\
 & =(t-s)\int_{s}^{t}I(\mu_{X_{r}|W}\|\nu|\mu_{W})\d r,\label{eq:-7}
\end{align}
where the inequality follows since $\W\left(\mu_{X_{s}|W=w},\mu_{X_{t}|W=w}\right)\le(t-s)\int_{s}^{t}I(\mu_{X_{r}|W=w}\|\nu)\d r$
for $\mu_{W}$-a.e. $w$; see \cite[p. 359]{ambrosio2014calculus}.
Hence, for $0\le s<t\le1$, 
\[
\W\left(\mu_{X_{s}|W},\mu_{X_{t}|W}|\mu_{W}\right)\le\frac{1}{2}(t-s+\int_{s}^{t}I(\mu_{X_{r}|W}\|\nu|\mu_{W})\d r).
\]
By the triangle inequality for $\W,$ this inequality extends to the
case of $0\le s<t<\infty$. The integral here is finite for any finite
$t$ since 
\begin{align}
\int_{0}^{t}I(\mu_{X_{r}|W}\|\nu|\mu_{W})\d r & =\mathbb{E}_{\mu_{W}}[\int_{0}^{t}I(\mu_{X_{r}|W}\|\nu)\d r]\nonumber \\
 & =\mathbb{E}_{\mu_{W}}[D(\mu_{X_{0}|W}\|\nu)-D(\mu_{X_{t}|W}\|\nu)]\label{eq:-3}\\
 & =D(\mu_{X_{0}|W}\|\nu|\mu_{W})-D(\mu_{X_{t}|W}\|\nu|\mu_{W}),\label{eq:-31}
\end{align}
and hence 
\begin{equation}
\int_{0}^{\infty}I(\mu_{X_{r}|W}\|\nu|\mu_{W})\d r\le D(\mu_{X_{0}|W}\|\nu|\mu_{W})<\infty\label{eq:-4}
\end{equation}
where \eqref{eq:-3} follows by \cite[Proposition 4.22]{ambrosio2014calculus}.
Hence, $t\mapsto\mu_{X_{t}|W}$ is absolutely continuous on $[0,\infty)$.
From \eqref{eq:-7}, we estimate $|\dot{\mu}_{X_{t}|W}|^{2}\le I(\mu_{X_{t}|W}\|\nu|\mu_{W})$
for a.e. $t$.

Equations \eqref{eq:-31} and \eqref{eq:-4} imply that $t\mapsto D(\mu_{X_{t}|W}\|\nu|\mu_{W})$
is absolutely continuous and nonincreasing on $[0,\infty)$. In fact,
\eqref{eq:-31} further implies that $\frac{\d}{\d t}D(\mu_{X_{t}|W}\|\nu|\mu_{W})=-I(\mu_{X_{t}|W}\|\nu|\mu_{W})$
for a.e. $t$.

We next prove \eqref{eq:-33} by following Otto and Villani's idea
in \cite{otto2000generalization} and Gigli and Ledoux's extension
\cite{gigli2013log}. For $\mu_{W}$-a.e. $w$, $\theta(D(\mu_{X|W=w}\|\nu))\leq I(\mu_{X|W=w}\|\nu).$
Taking expectation and by Jensen's inequality, 
\[
\theta(D(\mu_{X|W}\|\nu|\mu_{W}))\leq\mathbb{E}_{\mu_{W}}[\theta(D(\mu_{X|W}\|\nu))]\leq I(\mu_{X|W}\|\nu|\mu_{W}).
\]

Define 
\[
f(t)=\int_{0}^{t}\sqrt{I(\mu_{X_{r}|W}\|\nu|\mu_{W})}\d r+\Upsilon(0,D(\mu_{X_{t}|W}\|\nu|\mu_{W})).
\]
Then, it holds that for a.e. $t$, 
\begin{align*}
\frac{\d^{+}}{\d t}f(t) & \leq\sqrt{I(\mu_{X_{t}|W}\|\nu|\mu_{W})}+\frac{\frac{\d}{\d t}D(\mu_{X_{t}|W}\|\nu|\mu_{W})}{\sqrt{\theta(D(\mu_{X_{t}|W}\|\nu|\mu_{W}))}}\\
 & =\sqrt{I(\mu_{X_{t}|W}\|\nu|\mu_{W})}-\frac{I(\mu_{X_{t}|W}\|\nu|\mu_{W})}{\sqrt{\theta(D(\mu_{X_{t}|W}\|\nu|\mu_{W}))}}\;\le0,
\end{align*}
where $\frac{\d^{+}}{\d t}$ is the right derivative.

So, $f$ is non-increasing as a function of $t$, which implies $f(t)\le f(0)=\Upsilon(0,D(\mu_{X_{0}|W}\|\nu|\mu_{W}))$
for any $t\ge0$. Thus, 
\[
\int_{0}^{t}\sqrt{I(\mu_{X_{r}|W}\|\nu|\mu_{W})}\d r\le\Upsilon(D(\mu_{X_{t}|W}\|\nu|\mu_{W}),D(\mu_{X_{0}|W}\|\nu|\mu_{W})).
\]
Combining it with $|\dot{\mu}_{X_{t}|W}|^{2}\le I(\mu_{X_{t}|W}\|\nu|\mu_{W})$
for a.e. $t$. yields \eqref{eq:-33}. 
\end{proof}
We immediately obtain a corollary for Gaussian measure. 
\begin{cor}[From Nonlinear Log-Sobolev to Nonlinear Transport for Gaussians]
\label{cor:OU} Let $\gamma$ be the standard Gaussian measure on
$\mathbb{R}^{k}$. Let $X_{t}=e^{-t}X+\sqrt{1-e^{-2t}}Z$ be an Ornstein--Uhlenbeck
process on $\mathbb{R}^{k}$ with distribution $\mu_{t}$, where $X\sim\mu\in\P_{2}^{\mathrm{ac}}(\mathbb{R}^{k})$
and $Z\sim\gamma$ are independent. Then, for any $t\ge0$, 
\[
\sqrt{2D(\mu_{t}\|\gamma)}+\W(\mu_{t},\mu)\le\sqrt{2D(\mu\|\gamma)}.
\]
\end{cor}

The importance of Theorem \ref{thm:Sobolev-Talagrand} is as follows.
Given $\tau\ge0,\mu,\nu$, consider the following optimization problem:
\begin{equation}
\omega(\beta):=\inf_{\pi:\W(\pi,\mu)\le\beta}D(\pi\|\nu),\label{eq:-18}
\end{equation}
which characterizes the best tradeoff between $D(\pi\|\nu)$ and $\W(\pi,\mu)$
when we vary $\beta\ge0$. In other words, within the Wasserstein
ball of center $\mu$ and radius $\beta$, we aim at finding a distribution
$\pi$ which minimizes $D(\pi\|\nu)$. By Sanov's theorem in large
deviations theory \cite{Dembo}, $\omega(\beta)$ is the convergence
rate of the probability that the empirical measure of i.i.d. random
variables $X_{1},...,X_{n}$ with each $\sim\nu$ belongs to the Wasserstein
ball of center $\mu$ and radius $\beta$. This probability naturally
appears in concentration of measure and isoperimetric inequalities.

The Wasserstein gradient flow of the relative entropy starting from
$\mu$ forms a feasible solution to \eqref{eq:-18}. Specifically,
the Wasserstein gradient flow is a curve that evolves to decrease
$D(\pi\|\nu)$ most steeply with respect to the Wasserstein metric.
If it stops until touching the Wasserstein sphere of center $\mu$
and radius $\beta$, we then obtain a feasible solution to \eqref{eq:-18}
that is locally greedy. In other words, the Wasserstein gradient flow
$\mu_{t}$ yields an upper bound on $\omega(\beta)$, although in
general, $\mu_{t}$ is not the globally optimal. That is, 
\begin{align*}
\omega(\beta) & \le D(\mu_{T(\beta)}\|\nu),
\end{align*}
where $T(\beta)$ is the time $t$ such that $\W(\mu_{t},\mu)=\beta$,
corresponding to the escape time from the Wasserstein ball of center
$\mu$ and radius $\beta$. Optimization problems like the one in
\eqref{eq:-18} will be investigated in details in the next section.

\section{From Transport to Variant Isoperimetry }\label{sec:From-Talagrand-}

\subsection{Problem Formulation and Preliminaries}

We now consider a variant of the isoperimetric problem in which the
perimeter of a set is identified by the size of an enlargement shell
of the set. We then provide a solution to this variant isoperimetric
problem by using the nonlinear transport inequality obtained in the
last section.

Let $(\mathcal{X},d,\nu)$ be a Polish metric measure space with $\nu$
being a Borel probability measure. Denote $(\mathcal{X}^{n},d_{n},\nu_{n})$
as the $n$-fold product space of $\mathcal{X}$ equipped with the
product metric $d_{n}$ and product measure $\nu_{n}=\nu^{\otimes n}$.
Define the variant isoperimetric profile as for $a\in[0,1],t\ge0$,
\begin{equation}
\Gamma_{n}(a,t):=\inf_{A:\nu^{\otimes n}(A)\ge a}\nu^{\otimes n}(A^{t}).\label{eq:Gamma-2}
\end{equation}
We set $a=e^{-n\alpha}$ and $t=\sqrt{n\tau}$. Define the isoperimetric
exponent as for $\alpha,\tau\ge0$, 
\begin{align}
E_{n}(\alpha,\tau) & :=-\frac{1}{n}\ln\Gamma_{n}(e^{-n\alpha},\sqrt{n\tau}).\label{eq:-5}
\end{align}

Recall that the Wasserstein metric $\W(\pi_{X},\pi_{Y})$ (of order
$2$) between two probability measures $\pi_{X}$ and $\pi_{Y}$ is
defined by 
\begin{equation}
\W^{2}(\pi_{X},\pi_{Y}):=\min_{\pi\in\Pi(\pi_{X},\pi_{Y})}\int d^{2}(x,y)\d\pi(x,y),\label{eq:OT-2-1}
\end{equation}
where $\Pi(\pi_{X},\pi_{Y})$ is the set of joint probability measures
on $\mathcal{X}\times\mathcal{Y}$ with marginals equal to $\pi_{X},\pi_{Y}$
respectively. The existence of the minimizers are well-known; see,
e.g., \cite[Theorem 1.3]{villani2003topics}. For a probability measure
$\pi_{W}$ and two transition probabilities $\pi_{X|W}$ and $\nu_{X|W}$,
the conditional Wasserstein metric between $\pi_{X|W}$ and $\pi_{Y|W}$
given $\pi_{W}$ is defined by 
\[
\W^{2}(\pi_{X|W},\pi_{Y|W}|\pi_{W}):=\int\W^{2}(\pi_{X|W=w},\pi_{Y|W=w})\d\pi_{W}(w).
\]
The conditional relative entropy of $\pi_{X|W}$ w.r.t. $\nu_{X|W}$
given $\pi_{W}$ is defined as 
\[
D(\pi_{X|W}\|\nu_{X|W}|\pi_{W}):=\int D(\pi_{X|W=w}\|\nu_{X|W=w})\d\pi_{W}(w).
\]

Let $\nu_{X}$ and $\nu_{Y}$ be two probability measures on the space
$(\mathcal{X},d)$. Define $E_{n}(\alpha,\tau|\nu_{X},\nu_{Y})$ similarly
as the exponent in \eqref{eq:-5} but with $\Gamma_{n}$ replaced
by 
\[
\Gamma_{n}(a,t|\nu_{X},\nu_{Y}):=\inf_{A:\nu_{X}^{\otimes n}(A)\ge a}\nu_{Y}^{\otimes n}(A^{t}).
\]
Define 
\begin{align}
\psi_{\kappa}(\alpha,\tau|\nu_{X},\nu_{Y}) & :=\sup_{\substack{\pi_{XW}\in\P(\mathcal{X}\times\{0,1\}):\\
D(\pi_{X|W}\|\nu_{X}|\pi_{W})\le\alpha
}
}\eta_{\tau,\kappa}(\pi_{XW}|\nu_{Y}),\label{eq:-38}
\end{align}
where 
\begin{align}
\eta_{\tau,\kappa}(\pi_{XW}|\nu_{Y}) & :=\inf_{\substack{\pi_{Y|XW}:\mathbb{E}[d^{2}(X,Y)]\le\tau,\\
\mathbb{E}[\mathrm{Var}(d^{2}(X,Y)|X,W)]\le\kappa
}
}D(\pi_{Y|W}\|\nu_{Y}|\pi_{W}).\label{eq:-39}
\end{align}
Denote $\psi(\alpha,\tau|\nu_{X},\nu_{Y})=\psi_{\infty}(\alpha,\tau|\nu_{X},\nu_{Y})$,
equivalently in which case, the variance constraint in \eqref{eq:-39}
is removed. Without changing the optimal value, the range of $W$
in \eqref{eq:-38} can be changed to any larger finite set. This point
can be proven by following proof steps similar to that of \cite[Theorem 20]{yu2024exact}.

Based on $\psi$ and $\psi_{\kappa}$, the asymptotic expression of
$E_{n}$ for compactly supported $\nu_{X}$ is characterized in our
previous work, which is a generalization of Ahlswede and Zhang's work
\cite{ahlswede1999asymptotical}. 
\begin{thm}[{{Asymptotic Exponent for Compactly-Supported Measures \cite[Theorem 12]{yu2024exact}}}]
\label{thm:LD} Let $(\mathcal{X},d)$ be a Polish metric space equipped
with two probability measures $\nu_{X}$ and $\nu_{Y}$. The following
hold. 
\begin{enumerate}
\item Suppose $\nu_{X}$ is compactly supported. Then, for any $\alpha\ge0,\tau\ge0$,
\begin{equation}
\limsup_{n\to\infty}E_{n}(\alpha,\tau|\nu_{X},\nu_{Y})\le\lim_{\kappa\uparrow\infty}\psi_{\kappa}(\alpha,\tau|\nu_{X},\nu_{Y}).\label{eq:-38-8-1}
\end{equation}
In fact, a stronger result holds: Given $\delta,\kappa,\tau_{0}>0$,
for sufficiently large $n$ and any set $A\subseteq\mathcal{X}^{n}$
such that $\nu_{X}^{\otimes n}(A)\ge e^{-n\alpha}$, there is a probability
measure $\pi_{XW}\in\P(\mathcal{X}\times\mathbb{N})$ induced by $A$
such that $D(\pi_{X|W}\|\nu_{X}|\pi_{W})\le\alpha+\delta$ and for
any $\tau\in(\delta,\tau_{0}]$, 
\[
-\frac{1}{n}\ln\nu_{Y}^{\otimes n}(A^{\sqrt{n\tau}})\le(1+\delta)\eta_{\tau-\delta,\kappa}(\pi_{XW}|\nu_{Y})+\delta.
\]
\item For any $(\alpha,\tau)$ in the interior of $\mathrm{dom}\psi:=\left\{ (\alpha,\tau):\psi(\alpha,\tau|\nu_{X},\nu_{Y})<\infty\right\} $,
it holds that 
\begin{equation}
\liminf_{n\to\infty}E_{n}(\alpha,\tau|\nu_{X},\nu_{Y})\ge\psi(\alpha,\tau|\nu_{X},\nu_{Y}).\label{eq:-40}
\end{equation}
In particular, if further assume $\nu_{X}=\nu_{Y}$, then \eqref{eq:-40}
holds for any $\alpha\ge0,\tau\ge0$. 
\end{enumerate}
\end{thm}

\begin{rem}
Theorem \ref{thm:LD} still holds if the constraint $\pi_{XW}\in\P(\mathcal{X}\times\{0,1\})$
in \eqref{eq:-38} is replaced by $\pi_{XW}\in P_{\mathrm{c}}(\mathcal{X}\times\{0,1\})$. 
\end{rem}

Why is the asymptotic exponent determined by the relative entropy
and the Wasserstein metric? The intuition is as follows. For a distribution
$\pi$, denote $B_{\epsilon]}(\pi)$ as the closed ball of center
$\nu$ and radius $\epsilon$ under the Lévy--Prokhorov metric. Let
$\L_{n}:\mathbf{x}\in\mathcal{X}^{n}\mapsto\L_{\mathbf{x}}\in\P(\mathcal{X})$
be the empirical measure map, where $\L_{\mathbf{x}}:=\frac{1}{n}\sum_{i=1}^{n}\delta_{x_{i}}$
is the empirical measure of $x$, and $\P(\mathcal{X})$ is the set
of probability measures on $\mathcal{X}$. For a Polish space $\mathcal{X}$
and $\epsilon>0$, the empirically $\epsilon$-typical set of $\pi$
\cite{mitran2015on} is defined as $\mathcal{T}_{\epsilon}^{(n)}(\pi):=\L_{n}^{-1}(B_{\epsilon]}(\pi)).$

For an empirically $\epsilon$-typical set $A=\L_{n}^{-1}(B_{\epsilon}(\pi_{X}))$
(i.e., $A=\{\mathbf{x}\in\mathcal{X}^{n}:\L_{\mathbf{x}}\approx\pi_{X}\}$),
roughly speaking, by Sanov's theorem it holds that $\nu_{X}^{\otimes n}(A)\approx e^{-nD(\pi_{X}\|\nu_{X})}$
for $\epsilon$ chosen sufficiently small, which is no smaller than
$e^{-n\alpha}$ if $D(\pi_{X}\|\nu_{X})\le\alpha$. The $\sqrt{n\tau}$-enlargement
of this set can be written as 
\begin{align}
A^{\sqrt{n\tau}} & =\{\mathbf{y}:\frac{1}{n}d_{n}^{2}(\mathbf{x},\mathbf{y})\leq\tau,\exists\mathbf{x}\in A\}\nonumber \\
 & =\{\mathbf{y}:\mathbb{E}_{\L_{\mathbf{x},\mathbf{y}}}[d^{2}(X,Y)]\leq\tau,\L_{\mathbf{x}}\approx\pi_{X},\exists\mathbf{x}\}\nonumber \\
 & =\{\mathbf{y}:\W(\L_{\mathbf{x}},\L_{\mathbf{y}})\leq\sqrt{\tau},\L_{\mathbf{x}}\approx\pi_{X}\}\label{eq:-59}\\
 & \approx\L^{-1}\{\pi_{Y}:\W(\pi_{X},\pi_{Y})\leq\sqrt{\tau}\},\nonumber 
\end{align}
where \eqref{eq:-59} is due to Birkhoff's theorem, $\min_{\sigma}\mathbb{E}_{\L_{\sigma(\mathbf{x}),\mathbf{y}}}[d^{2}(X,Y)]=\W^{2}(\L_{\mathbf{x}},\L_{\mathbf{y}})$,
with $\sigma$ denoting a permutation operator \cite[Page 5]{villani2003topics}.
By Sanov's theorem again, 
\[
\nu_{Y}^{\otimes n}(A^{\sqrt{n\tau}})\approx\exp\{-n\inf_{\pi_{Y}:\W(\pi_{X},\pi_{Y})\leq\sqrt{\tau}}D(\pi_{Y}\|\nu_{Y})\}.
\]
Optimizing over $\pi_{X}$ such that $D(\pi_{X}\|\nu_{X})\le\alpha$,
we obtain the isoperimetric exponent for $A$, 
\[
\sup_{\pi_{X}:D(\pi_{X}\|\nu_{X})\le\alpha}\inf_{\pi_{Y}:\W(\pi_{X},\pi_{Y})\leq\sqrt{\tau}}D(\pi_{Y}\|\nu_{Y}).
\]
By product construction of such $A$'s (forming a conditional empirically
typical set), we obtain the exponent $\psi(\alpha,\tau|\nu_{X},\nu_{Y})$.

The intuition above provides a proof idea for Statement 2 of Theorem
\ref{thm:LD}. However, Statement 1 is highly nontrivial and its proof
relies on the so-called the inherently typical subset lemma \cite{ahlswede1997identification}.
This lemma enables us to tailor a set into a new one that is ``regular''
in some sense, so that we can analyze the asymptotic expression of
the boundary size of the set.

\subsection{Riemannian Energy Measure Spaces Satisfying $\text{BE}(K,\infty)$
and $\operatorname{RCD}(K,\infty)$-spaces}\label{subsec:Riemannian-Energy-Measure}

Our goal in the whole section is to generalize Statement 1 of Theorem
\ref{thm:LD} to probability measures satisfying Condition \ref{cond:integrability}.
However, since the class of Polish metric measure spaces is too broad
to allow for straightforward derivation of bounds, we restrict our
analysis to a subclass of Polish spaces with favorable properties---namely,
Riemannian energy measure spaces satisfying the $\text{BE}(K,\infty)$
condition, or equivalently, $\text{RCD}(0,\infty)$-spaces. This choice
enables us to use the well-defined heat flow to establish bounds for
variant isoperimetry. To this end, we now introduce these two concepts;
for further details, we refer the reader to \cite{AmbrosioGigliSavare14,Ambrosio2015BakryEmery}.

We consider an\emph{ Energy measure space} $(\X,d,\nu)$, where $(\X,d)$
is a Polish metric space, $\nu$ is a Borel probability measure with
full support, and the underlying $\sigma$-algebra is the $\nu$-completion
of the Borel $\sigma$-algebra. The structure is governed by the Cheeger
energy $\text{Ch}:L^{2}(\X,\nu)\to[0,\infty]$ given in \eqref{eq:Ch},
defined as the $L^{2}$-relaxation of the local Lipschitz constant.
The space is assumed to be \emph{infinitesimally Hilbertian} in the
following sense. 
\begin{itemize}
\item $\mathcal{E}=2\text{Ch}$ is a quadratic form that constitutes a strongly
local, symmetric Dirichlet form $(\mathcal{E},\mathbb{V})$ with domain
$\mathbb{V}:=D(\mathcal{E})=\bigl\{ f\in L^{2}(\X,\nu):\mathcal{E}(f)<\infty\bigr\}$. 
\end{itemize}
Define the set $\mathbb{G}\subseteq\mathbb{V}$ such that for $f\in\mathbb{G}$,
the linear form 
\[
\boldsymbol{\Gamma}[f;\varphi]:=\mathcal{E}(f,f\varphi)-\tfrac{1}{2}\mathcal{E}(f^{2},\varphi),\;\varphi\in\mathbb{V}_{\infty}:=\mathbb{V}\cap L^{\infty}(\X,\nu)
\]
is represented by the $L_{+}^{1}$ density $\Gamma(f)=|\nabla f|_{*}^{2}$,
satisfying $\mathcal{E}(f,f)=\int\Gamma(f)\,\d\nu$. Define $\Gamma(f,g)=\frac{1}{4}\Gamma(f+g)-\frac{1}{4}\Gamma(f-g)$
which is known as the carré du champ operator. By the uniform continuity
property, we extend $\boldsymbol{\Gamma}$ to a real multilinear map
defined in $\mathbb{V}\times\mathbb{V}\times\mathbb{V}_{\infty}$,
which satisfies 
\[
\mathbf{\Gamma}[f,g;\varphi]=\int\Gamma(f,g)\varphi\,\d\nu\quad\text{if }f,g\in\mathbb{G}.
\]

Define $\mathcal{L}:=\{\psi\in\mathbb{G}:\Gamma(\psi)\leq1\quad\nu\text{-a.e. in }\X\}$
and $\mathcal{L}_{C}:=\mathcal{L}\cap C(\X)$. The compatibility between
the metric $d$ and the energy $\mathcal{E}$ is enforced by the following
structural assumptions: 
\begin{itemize}
\item Identification: $\mathcal{L}=\mathcal{L}_{C}$. That is, every function
with a gradient density bounded by one $\nu$-a.e. admits a continuous
representative. 
\item Lipschitz Consistency: Every function in $\mathcal{L}_{C}$ is $1$-Lipschitz
with respect to $d$. 
\end{itemize}
The above three assumptions turns the space $(\X,d,\nu)$ described
above into a \emph{Riemannian Energy measure space} in which the intrinsic
distance $d_{\mathcal{E}}$ induced by the Dirichlet form coincides
with the distance $d$, generalizing smooth Riemannian manifolds to
the possibly non-smooth setting.

The Dirichlet form $\mathcal{E}$ induces a densely defined self-adjoint
operator $\Delta_{\mathcal{E}}:D(\Delta_{\mathcal{E}})\subset\mathbb{V}\to L^{2}(\X,\nu)$
defined by the integration by parts formula $\mathcal{E}(f,g)=-\int g\Delta_{\mathcal{E}}f\,\d\nu$
for all $g\in\mathbb{V}$. This self-adjoint operator generates the
heat flow $(P_{t})_{t\ge0}$, an analytic Markov semigroup characterized
by $\frac{\mathrm{d}}{\mathrm{d}t}P_{t}f=\Delta_{\mathcal{E}}P_{t}f$,
a specialization of the heat equation in \eqref{eq:gradCh} in the
Riemannian Energy measure space.

Bakry--Émery conditions are formulated via the iterated gradient
form $\boldsymbol{\Gamma}_{2}$. For $(f,\varphi)\in D(\boldsymbol{\Gamma}_{2})$,
define: 
\[
\boldsymbol{\Gamma}_{2}[f;\varphi]:=\tfrac{1}{2}\boldsymbol{\Gamma}[f;\Delta_{\mathcal{E}}\varphi]-\boldsymbol{\Gamma}[f,\Delta_{\mathcal{E}}f;\varphi],
\]
where 
\[
D(\Gamma_{2}):=\left\{ (f,\varphi)\in D(\Delta_{\mathcal{E}})\times D(\Delta_{\mathcal{E}}):\Delta_{\mathcal{E}}f\in\mathbb{V},\,\varphi,\Delta_{\mathcal{E}}\varphi\in L^{\infty}(\X,\nu)\right\} .
\]
We can write 
\[
\mathbf{\Gamma}_{2}[f;\varphi]=\int\left(\frac{1}{2}\Gamma(f)\Delta_{\mathcal{E}}\varphi-\Gamma(f,\Delta_{\mathcal{E}}f)\varphi\right)\d\nu\quad\text{if }(f,\varphi)\in D(\mathbf{\Gamma}_{2}),\ f,\Delta_{\mathcal{E}}f\in\mathbb{G}.
\]
The space satisfies the \emph{Bakry--Émery condition} $\text{BE}(K,\infty)$
if for every $\varphi\in D(\Delta_{\mathcal{E}})$ with $\varphi\geq0$:
\[
\boldsymbol{\Gamma}_{2}[f;\varphi]\geq K\boldsymbol{\Gamma}[f;\varphi].
\]
In the setting of smooth weighted Riemannian manifolds, this condition
can be written as $\frac{1}{2}\Gamma(f)\Delta_{\mathcal{E}}-\Gamma(f,\Delta_{\mathcal{E}}f)\ge K\Gamma(f)$,
or equivalently, $\mathrm{Ric}_{V}\ge Kg$ for probability measure
$\nu=e^{-V}\mathrm{vol}$. Under $\text{BE}(K,\infty)$, it is known
that $\mathbb{G}=\mathbb{V}$.

The Riemannian Energy measure space $(\X,d,\nu)$ (with $d=d_{\mathcal{E}}$)
satisfies $\text{BE}(K,\infty)$ if and only if it is an $\text{RCD}(K,\infty)$
space \cite[Theorem 4.17]{Ambrosio2015BakryEmery}. From the perspective
of the geometry of optimal transport, this equivalence implies that
the relative entropy functional $D(\cdot\|\nu)$ is not merely $K$-geodesically
convex on the Wasserstein space $(\mathscr{P}_{2}(\X),\W)$, but that
this convexity is robust: it extends to hold along all interpolated
measures induced by weighted optimal geodesic plans (or test plans).

A fundamental consequence of the $\operatorname{RCD}(K,\infty)$ property
(equivalently, the Riemannian Energy measure space with $\text{BE}(K,\infty)$
property) is the identification of the metric $\W$-gradient flow
$\mathcal{H}_{t}$ with the analytic $L^{2}$-heat flow $P_{t}$.
For any $f\in L^{2}(\X,\nu)$ such that $f\nu\in\mathscr{P}_{2}(\X)$,
the identification $\mathcal{H}_{t}(f\nu)=(P_{t}f)\nu$ holds. Furthermore,
the heat flow admits the representation: 
\[
(P_{t}f)\nu=\int f(x)\mathcal{H}_{t}(\delta_{x})\,\d\nu(x),
\]
characterizing the evolved measure as a weighted superposition of
the evolved Dirac masses.

\subsection{Variant Isoperimetry for Noncompactly-Supported Measures}\label{subsec:Variant-Isoperimetry-for}

We next extend Statement 1 of Theorem \ref{thm:LD} to the Riemannian
Energy measure space satisfying $\text{BE}(K,\infty)$ and Condition
\ref{cond:integrability} (equivalently, a $\mathrm{RCD}(0,\infty)$-space
satisfying Condition \ref{cond:integrability}). To this end, we need
the following conditions on the covering bound, volume ratio bound,
and curvature-dimension bound.

\begin{condition}[Covering Bound]\label{cond:covering} Let $x_{0}\in\mathcal{X}$
be a fixed point. Then, for any $n\ge1$, $\lambda\ge1$, and $r>0$,
there is cover of the large ball $B_{\sqrt{n}\lambda r}(\mathbf{x}_{0})$
with $\mathbf{x}_{0}=(x_{0},...,x_{0})\in\mathcal{X}^{n}$ that consists
of small balls $B_{\sqrt{n}r}(\mathbf{x}_{i})$ with $\mathbf{x}_{i}\in\mathcal{X}^{n}$
and $1\le i\le N$, i.e., $B_{\sqrt{n}\lambda r}(\mathbf{x}_{0})\subseteq\bigcup_{1\le i\le N}B_{\sqrt{n}r}(\mathbf{x}_{i})$,
where $N=N_{n,\lambda,r}$ satisfies 
\[
\lim_{r\to\infty}\limsup_{n\to\infty}\frac{1}{nr^{2}}\ln N_{n,\lambda,r}=0,\;\forall\lambda\ge1.
\]
\end{condition}

\begin{condition}[Volume Ratio Bound]\label{cond:volume} There
is a nonnegative (not necessarily finite or probability) measure $\mathfrak{m}$
on $\mathcal{X}^{n}$ such that for all $\lambda\ge1$, 
\begin{equation}
\lim_{r\to\infty}\limsup_{n\to\infty}\frac{1}{nr^{2}}\ln N_{n}(\sqrt{n}\lambda r,\sqrt{n}r)=0,\label{eq:-44}
\end{equation}
where 
\begin{equation}
N_{n}(R,r)=\sup_{\mathbf{x}\in\mathcal{X}^{n}}\frac{\mathfrak{m}(B_{R}(\mathbf{x}))}{\mathfrak{m}(B_{r}(\mathbf{x}))}.\label{eq:-45}
\end{equation}
\end{condition}

\begin{condition}[Curvature-Dimension Bound]\label{cond:CD}\textbf{
}Suppose $\mathcal{X}$ is a subset of a Riemannian manifold $(\M,g)$
or the interior of $\mathcal{X}$ is a Riemannian manifold $(\M,g)$.
There is a nonnegative (not necessarily finite or probability) measure
$\mathfrak{m}=e^{-U}\mathrm{vol}$ on manifold $\M$ with $U\in C^{2}(\M)$
satisfying $\mathrm{CD}(K,N)$ for some $N\in(k,\infty)$ and $K\in\mathbb{R}$
with $k=\mathrm{dim}(\M)$: 
\[
\mathrm{Ric}_{U}^{N}:=\mathrm{Ric}_{g}+\mathrm{Hess}_{g}U-\frac{\nabla U\otimes\nabla U}{N-k}\geq Kg.
\]
\end{condition}

The conditions specified above and Condition \ref{cond:Ricci} satisfy
the following implication relation. 
\begin{prop}
\label{prop:Condition} It holds that Condition \ref{cond:volume}
$\Longrightarrow$ Condition \ref{cond:covering}. Moreover, if $\mathcal{X}$
is a subset of a Riemannian manifold $(\M,g)$ or the interior of
$\mathcal{X}$ is a Riemannian manifold $(\M,g)$, then Condition
\ref{cond:Ricci} $\Longrightarrow$ Condition \ref{cond:CD} $\Longrightarrow$
Condition \ref{cond:volume} $\Longrightarrow$ Condition \ref{cond:covering}. 
\end{prop}

Define 
\[
\Upsilon_{\alpha}(s)=\int_{s}^{\alpha}\frac{1}{\sqrt{\theta(r)}}\d r.
\]
Define 
\begin{align}
\bar{\psi}(\alpha,\tau) & =\begin{cases}
\Upsilon_{\alpha}^{-1}(\sqrt{\tau}), & \Upsilon_{\alpha}(0)\ge\sqrt{\tau}\\
0, & \Upsilon_{\alpha}(0)<\sqrt{\tau}
\end{cases},\label{eq:-63-2-1}
\end{align}
where for each $\alpha$, denote $\Upsilon_{\alpha}^{-1}$ as the
inverse of $\Upsilon_{\alpha}$. We are ready to extend Statement
1 of Theorem \ref{thm:LD} to all probability measures satisfying
Condition \ref{cond:integrability} or equivalently admitting a standard
log-Sobolev inequality. 
\begin{thm}[Asymptotic Exponent for Noncompactly-Supported Measures]
\label{thm:LD2} Let $(\mathcal{X},d,\nu)$ be a Riemannian Energy
measure space satisfying $\text{BE}(K,\infty)$ (equivalently, it
is a $\mathrm{RCD}(0,\infty)$-space). Suppose that this space satisfies
Condition \ref{cond:integrability} and it admits a $\theta$-nonlinear
log-Sobolev inequality. If the condition in \eqref{eq:Gaussian-2}
holds, we further assume Condition \ref{cond:covering} (or Conditions
\ref{cond:Ricci}, \ref{cond:CD}, or \ref{cond:volume}) holds. Then,
for any $\alpha\ge0,\tau\ge0$, 
\[
\limsup_{n\to\infty}E_{n}(\alpha,\tau)\le\bar{\psi}(\alpha,\tau).
\]
\end{thm}

\begin{rem}
\label{rem:Under}Under $\mathrm{RCD}(0,\infty)$ (equivalently, the
Riemannian Energy setting with $\mathrm{BE}(0,\infty)$) for a probability
measure $\nu\in\P_{2}(\mathcal{X})$, Condition \ref{cond:integrability}
holds if and only if it admits a standard log-Sobolev inequality \cite[Theorem 3.2]{bakry2004functional}. 
\end{rem}

\begin{rem}
Define 
\begin{align}
\psi(\alpha,\tau) & :=\psi(\alpha,\tau|\nu,\nu)\nonumber \\
 & =\sup_{\pi_{XW}\in\P(\mathcal{X}\times\{0,1\}):D(\pi_{X|W}\|\nu|\pi_{W})\le\alpha}\nonumber \\
 & \qquad\inf_{\pi_{Y|W}:\W(\pi_{X|W},\pi_{Y|W}|\pi_{W})\le\sqrt{\tau}}D(\pi_{Y|W}\|\nu|\pi_{W}),\label{eq:-41}
\end{align}
and 
\[
\tilde{\psi}(\alpha,\tau)=\frac{1}{2}\left[\sqrt{2\alpha}-\sqrt{K_{LS}^{+}\tau}\right]_{+}^{2},
\]
where recall the definition of $K_{LS}^{+}$ in \eqref{eq:-11}. By
checking our proof, the following bound can be obtained: 
\[
\limsup_{n\to\infty}E_{n}(\alpha,\tau)\le\sup_{\alpha_{0}+\alpha_{1}\le\alpha}\inf_{\tau_{0}+\tau_{1}\le\tau}\psi(\alpha_{0},\tau_{0})+\tilde{\psi}(\alpha_{1},\tau_{1}).
\]
This bound is more consistent with the lower bound in Statement 2
of Theorem \ref{thm:LD}, and it coincides with the latter if $\psi\le\tilde{\psi}$. 
\end{rem}

\subsection{Proof of Proposition \ref{prop:Condition} }

By choosing $\mathfrak{m}$ as the Riemannian volume measure, Condition
\ref{cond:Ricci} $\Longrightarrow$ Condition \ref{cond:CD} is obvious.
We next show that Condition \ref{cond:volume} $\Longrightarrow$
Condition \ref{cond:covering}.

For a subset $A\subseteq\M^{n}$, denote by $N(A,r)$ the minimal
covering number (i.e., the smallest number of balls of radius $r$
needed to cover $A$) and by $M(A,r)$ the maximal packing number
(i.e., the largest number of disjoint balls of radius $r$ whose centers
lie in $A$). By the standard lemma for metric spaces, $N(A,r)\le M(A,r/2).$
Setting $A=B_{R}(\mathbf{x})$ yields that $N(B_{R}(\mathbf{x}),r)\le M(B_{R}(\mathbf{x}),r/2).$

For any ball $B(\mathbf{x}_{i},r/2)$ in the packing for $M(B_{R}(\mathbf{x}),r/2)$,
since $p_{i}\in B_{R}(\mathbf{x})$, all such balls are contained
in $B_{R+r/2}(\mathbf{x})$. Let $i^{*}$ minimize $\mathfrak{m}^{\otimes n}(B_{r/2}(\mathbf{x}_{i}))$
over all $i$. Combining this with the fact that $B_{R+r/2}(\mathbf{x})\subseteq B_{2R+r/2}(\mathbf{x}_{i^{*}})$,
we obtain: 
\[
M(B_{R}(\mathbf{x}),r/2)\cdot\mathfrak{m}^{\otimes n}(B_{r/2}(\mathbf{x}_{i^{*}}))\le\mathfrak{m}^{\otimes n}(B_{R+r/2}(\mathbf{x}))\le\mathfrak{m}^{\otimes n}(B_{2R+r/2}(\mathbf{x}_{i^{*}})).
\]

Therefore, for any $\mathbf{x}$ and any $R>r>0$, 
\[
N(B_{R}(\mathbf{x}),r)\le M(B_{R}(\mathbf{x}),r/2)\le\frac{\mathfrak{m}^{\otimes n}(B_{2R+r/2}(\mathbf{x}_{i^{*}}))}{\mathfrak{m}^{\otimes n}(B_{r/2}(\mathbf{x}_{i^{*}}))}\le N_{n}(2R+r/2,r/2).
\]

Substituting $(R,r)\leftarrow(\sqrt{n}\lambda r,\sqrt{n}r)$ into
the inequality above and by the condition \eqref{eq:-44}, we obtain
that 
\[
\lim_{r\to\infty}\limsup_{n\to\infty}\frac{1}{nr^{2}}\ln N(B_{\sqrt{n}\lambda r}(\mathbf{x}_{0}),\sqrt{n}r)=0,\;\forall\lambda\ge1.
\]
Therefore, Condition \ref{cond:covering} holds.

We now prove that Condition \ref{cond:CD} $\Longrightarrow$ Condition
\ref{cond:volume}. Let $\mathfrak{m}=e^{-U}\mathrm{vol}$ be the
nonnegative measure on manifold $\M$ given in Condition \ref{cond:CD}.
Then, $\mathfrak{m}^{\otimes n}=e^{-U^{\oplus n}}\mathrm{vol}_{n}$
is a nonnegative measure on manifold $\M^{n}$ with $U^{\oplus n}\in C^{2}(\M^{n})$
such that $\mathrm{Ric}_{U^{\oplus n}}^{nN}\ge nK$.

For a point $p$ in $\M$ and $\xi\in T_{p}\M$ such that $|\xi|=1$,
let $\gamma_{\xi}(t)=\exp_{p}(t\xi)$ be the geodesic starting at
$p$ with direction $\xi$, and let $\lambda_{p}(t,\xi)=\lambda_{p}^{U}(t,\xi)$
be the density of the weighted measure $\mathfrak{m}$ at the point
$\gamma_{\xi}(t)$. The density $\lambda_{p}(t,\xi)$ is defined to
be zero whenever $t$ exceeds the distance to the cut point in the
direction $\xi$. This density incorporates both the Riemannian volume
distortion (Jacobian of the exponential map) and the weight function
$e^{-U}$. Let $\lambda^{(K)}(t)$ be the Riemannian volume density
in the $(\mathrm{dim}(\M)+N)$-dimensional model space $\M^{(K)}$
with constant Ricci curvature $K$. Following \cite[Theorem 7]{qian1997estimates},
under the condition $\mathrm{Ric}_{U}^{N}\ge Kg$, it holds that for
any $\xi$, $\frac{\lambda_{p}(t,\xi)}{\lambda^{(K)}(t)}$ is non-increasing
in $t$ for $\gamma_{\xi}(t)\notin\mathrm{cut}(p)$, where $\mathrm{cut}(p)$
is the cut locus of the Riemannian manifold $M$ with respect to the
point $p$.

Let $A(p,t)$ be the $(\mathrm{dim}(\M)-1)$-dimensional $\mathfrak{m}$-weighted
area of a geodesic sphere of radius $t$ at center $p$ in $M$. Then,
for any $t>0$, 
\begin{align*}
A(p,t) & =\int_{S_{p}\M}\lambda_{p}(t,\xi)\,\d\mathrm{area}(\xi)
\end{align*}
where $S_{p}\M$ is the unit sphere in the tangent space $T_{p}\M$
at point $p$, and $\d\mathrm{area}(\xi)$ is the standard area element
of the unit $(n-1)$-sphere in the tangent space.

Let $A^{(K)}(t)$ be the $(\mathrm{dim}(\M)-1)$-dimensional Riemannian
area of a geodesic sphere of radius $t$ at center $p$ in the $\mathrm{dim}(\M)$-dimensional
model space $\M^{(K)}$ with constant Ricci curvature $K$. Then,
\begin{align*}
A^{(K)}(t) & =\lambda^{(K)}(t)\cdot|S_{p}\M^{(K)}|,
\end{align*}
where $S_{p}\M^{(K)}$ is the unit sphere in the tangent space $T_{p}\M^{(K)}$
at point $p$, and $|S_{p}\M^{(K)}|$ is the standard area of the
unit $(n-1)$-sphere in the tangent space. Note that the value of
$|S_{p}\M^{(K)}|$ is independent of $p$.

Observe that 
\[
\frac{A(p,t)}{A^{(K)}(t)}=\frac{1}{|S_{p}\M^{(K)}|}\int_{S_{p}\M}\frac{\lambda_{p}(t,\xi)}{\lambda^{(K)}(t)}\,\d\mathrm{area}(\xi),
\]
which is non-increasing in $t$. The volume of the ball with center
$\mathbf{p}=(p_{1},...,p_{n})$ and radius $R$ in the space $\M^{n}$
is 
\[
V_{n}(\mathbf{p},R)=\int_{\sum_{i=1}^{n}\rho_{i}^{2}\le R^{2}}\prod_{i=1}^{n}A(p_{i},\rho_{i})\d\rho_{i}.
\]
Similarly, the volume of the ball with radius $R$ (at any center)
in the space $(\M^{(K)})^{n}$ is 
\begin{align*}
V_{n}^{(K)}(R) & =\int_{\sum_{i=1}^{n}\rho_{i}^{2}\le R^{2}}\prod_{i=1}^{n}A^{(K)}(\rho_{i})\d\rho_{i}.
\end{align*}
We have the following key lemma. 
\begin{lem}
\label{lem:volumeratio}Given any $\mathbf{p}$, $f(R)=\frac{V_{n}(\mathbf{p},R)}{V_{n}^{(K)}(R)}$
is non-increasing. 
\end{lem}

\begin{proof}
To show that the ratio $f(R)=\frac{V_{n}(\mathbf{p},R)}{V_{n}^{(K)}(R)}$
is non-increasing, we utilize the properties of the Bishop-Gromov
comparison theorem and the behavior of ratios of integrals.

Step 1: As shown above, the ratio of the areas of geodesic spheres
$\frac{A(p,r)}{A^{(K)}(r)}$ is a non-increasing function of $r$.
Let $\rho=(\rho_{1},\dots,\rho_{n})$ be a vector in the integration
domain. For any scalar $r>0$ and a unit vector $\omega\in\ensuremath{S_{+}^{n-1}=\{\omega\in S^{n-1}:\omega_{i}\ge0\text{\ for\ all\ }i\}}$,
let $\rho=r\omega$. The product ratio is: $\frac{\prod_{i=1}^{n}A(p_{i},r\omega_{i})}{\prod_{i=1}^{n}A^{(K)}(r\omega_{i})}=\prod_{i=1}^{n}\frac{A(p_{i},r\omega_{i})}{A^{(K)}(r\omega_{i})}$.
Since each term $\frac{A(p_{i},r\omega_{i})}{A^{(K)}(r\omega_{i})}$
is non-increasing in $r$ (because $r\omega_{i}$ increases with $r$
for $\omega_{i}\ge0$), the product of these non-negative non-increasing
functions is also non-increasing in $r$.

Step 2: We express the volumes $V_{n}(R)$ and $V_{n}^{(K)}(R)$ using
polar coordinates in the $\rho$-space $\mathbb{R}^{n}$:$V_{n}(R)=V_{n}(\mathbf{p},R)=\int_{0}^{R}r^{n-1}\left(\int_{S_{+}^{n-1}}\prod A(p_{i},r\omega_{i})d\omega\right)dr$.
Let $G(r)=G(\mathbf{p},r)=r^{n-1}\int_{S_{+}^{n-1}}\prod A(p_{i},r\omega_{i})d\omega$
and $H(r)=r^{n-1}\int_{S_{+}^{n-1}}\prod A^{(K)}(r\omega_{i})d\omega$.
From Step 1, the ratio of the integrands $\frac{\prod A(p_{i},r\omega_{i})}{\prod A^{(K)}(r\omega_{i})}$
is non-increasing in $r$ for every $\omega$. Consequently, the ratio
of the integrals over the angular components, $\frac{G(r)}{H(r)}$,
is also non-increasing.

Step 3: A standard lemma in analysis states that if $G(r)$ and $H(r)$
are positive functions such that $\frac{G(r)}{H(r)}$ is non-increasing,
then the ratio of their cumulative integrals $F(R)=\frac{\int_{0}^{R}G(r)dr}{\int_{0}^{R}H(r)dr}$
is also non-increasing. To see this, calculate the derivative $F^{\prime}(R)$:
$F^{\prime}(R)=\frac{G(R)\int_{0}^{R}H(r)dr-H(R)\int_{0}^{R}G(r)dr}{(\int_{0}^{R}H(r)dr)^{2}}$.
Since $\frac{G(r)}{H(r)}\ge\frac{G(R)}{H(R)}$ for all $r\le R$,
we have $H(R)G(r)\ge G(R)H(r)$. Integrating both sides w.r.t $r$
from $0$ to $R$ yields $H(R)\int_{0}^{R}G(r)dr\ge G(R)\int_{0}^{R}H(r)dr$.
Thus, $F^{\prime}(R)\le0$. The function $f(R)=\frac{V_{n}(R)}{V_{n}^{(K)}(R)}$
is non-increasing for $R>0$. 
\end{proof}
In a model space $M^{(K)}$ of dimension $k=\mathrm{dim}(\M)+N$,
the area $A^{(K)}(\rho)$ of a geodesic sphere of radius $\rho$ is
given by 
\[
A^{(K)}(\rho)=S_{k-1}\cdot\mathrm{sn}_{\kappa}(\rho)^{k-1},
\]
where $S_{k-1}=\frac{2\pi^{k/2}}{\Gamma(k/2)}$ is the area of a unit
$(k-1)$-sphere (in Euclidean spaces), and the function $\mathrm{sn}_{\kappa}(\rho)$
is defined as 
\[
\mathrm{sn}_{\kappa}(\rho)=\begin{cases}
\frac{1}{\sqrt{\kappa}}\sin(\sqrt{\kappa}\rho), & K>0,\\
\rho, & K=0,\\
\frac{1}{\sqrt{|\kappa|}}\sinh(\sqrt{|\kappa|}\rho), & K<0,
\end{cases}
\]
with $\kappa=\frac{K}{k-1}$.

To prove the desired result, it suffices to only consider the weaker
case, $K<0$. Recalling the definition of $N_{n}$ in \eqref{eq:-45}
and by Lemma \ref{lem:volumeratio}, 
\[
N_{n}(\sqrt{n}R,\sqrt{n}r)=\sup_{\mathbf{p}}\frac{V_{n}(\mathbf{p},\sqrt{n}R)}{V_{n}(\mathbf{p},\sqrt{n}r)}\le\frac{V_{n}^{(K)}(\sqrt{n}R)}{V_{n}^{(K)}(\sqrt{n}r)}.
\]
By large deviations theory (or Laplace's Method), one can determine
the asymptotics of $V_{n}^{(K)}(\sqrt{n}R)$ as follows 
\[
\lim_{n\rightarrow\infty}\frac{1}{n}\ln V_{n}^{(K)}(\sqrt{n}R)=\sqrt{|K|(k-1)}R.
\]
Hence, substituting $R\leftarrow\lambda r$, we obtain that 
\[
\lim_{r\to\infty}\limsup_{n\to\infty}\frac{1}{nr^{2}}\ln N_{n}(\sqrt{n}\lambda r,\sqrt{n}r)=\lim_{r\to\infty}\frac{1}{r^{2}}\sqrt{|K|(k-1)}(\lambda-1)r=0.
\]

\subsection{Proof of Theorem \ref{thm:LD2}}

We now provide the proof of Theorem \ref{thm:LD} with the outline
given as follows. By Sanov's theorem, we may restrict most coordinates
to compact sets without altering the exponential order of the set's
size, though the remaining coordinates cannot be constrained. Fortunately,
in the case that the condition \eqref{eq:subGaussian} holds, the
measure $\nu^{\otimes n}$ exhibits stronger concentration than Gaussian
measures, which allows us to cover the majority of the mass of the
remaining coordinates at only a negligible enlargement cost (i.e.,
in terms of distance). In other words, the isoperimetric exponent
in this case is determined by the one for compact sets and the latter
was already characterized in Theorem \ref{thm:LD}. On the other hand,
in the case that the condition \eqref{eq:Gaussian-2} holds, under
the covering bound assumption (Condition \ref{cond:covering}), the
majority of the mass of the remaining coordinates can be covered by
subexponentially many small balls with negligible radius. Thus, there
is one small ball that dominates others in the sense that it has almost
the same mass (up to a subexponential factor) as the whole mass of
the remaining coordinates. Covering this small ball will cover the
majority of the mass of the remaining coordinates at only a negligible
enlargement cost (i.e., in terms of distance). In this case, the isoperimetric
exponent is also determined by the one for compact sets characterized
in Theorem \ref{thm:LD}. We next present the detailed proof.

\subsubsection{Step 1: Tailoring the Set}

Let $A_{n}$ be a sequence of optimal subsets asymptotically attaining
the optimal exponent $\limsup_{n\to\infty}E_{n}(\alpha,\tau)$. Specifically,
we let $A_{n}$ be a sequence of subsets such that $\nu^{\otimes n}(A_{n})\ge e^{-n\alpha}$
and $-\frac{1}{n}\ln\nu^{\otimes n}(A_{n}^{\sqrt{n\tau}})\ge E_{n}(\alpha,\tau)-\frac{1}{n}.$
When there is no ambiguity, for brevity we denote $A=A_{n}$.

Since $\mathcal{X}$ is Polish, any probability measure on it is tight.
So, for any $\eta\in(0,1)$, there is a compact set $F=F_{\eta}\subseteq\mathcal{X}$
such that $\nu(F^{c})\le\eta$. Let $\mathbf{X}=(X_{1},...,X_{n})\sim\nu^{\otimes n}$
and $S_{i}:=\bone_{F^{c}}(X_{i})$, $i\in[n]$. Then, $\mathbf{S}\sim\Bern(\nu(F^{c}))^{\otimes n}$.
By Sanov's theorem, for any $\epsilon\in(\eta,1)$, 
\[
\mathbb{P}\{|\mathbf{S}|>n\epsilon\}\le e^{-nD(\epsilon\|\nu(F^{c}))}\le e^{-nD(\epsilon\|\eta)},
\]
where $|\mathbf{S}|=\sum_{i=1}^{n}S_{i}$ denotes the number of ``1''
in $\mathbf{S}$, $D(\epsilon\|\eta)=\epsilon\ln\frac{\epsilon}{\eta}+(1-\epsilon)\ln\frac{1-\epsilon}{1-\eta}$
is the binary relative entropy function, and the second inequality
follows since $\eta\mapsto D(\epsilon\|\eta)$ is decreasing for $\eta<\epsilon$.
Since $\eta\mapsto D(\epsilon\|\eta)$ goes to infinity as $\eta\downarrow0$,
given any $\epsilon>0$, we can choose $\eta=\eta_{\epsilon}$ small
enough so that $D(\epsilon\|\eta)>\alpha$. For example, we can choose
$\eta=\epsilon e^{-\epsilon^{-2}}$ and choose $\epsilon$ small enough.
By basic probabilistic inequalities, we have that 
\begin{align*}
\mathbb{P}\{\mathbf{X}\in A,\,|\mathbf{S}|\le n\epsilon\} & \ge\mathbb{P}\{\mathbf{X}\in A\}-\mathbb{P}\{|\mathbf{S}|>n\epsilon\}\ge e^{-n\alpha}-e^{-nD(\epsilon\|\eta)},
\end{align*}
which implies that 
\begin{equation}
\mathbb{P}\{\mathbf{X}\in A,\,|\mathbf{S}|\le n\epsilon\}\ge e^{-n(\alpha+o(1))}\label{eq:-3-1}
\end{equation}
with $o(1)$ denoting a term vanishing as $n\to\infty$.

Under the condition $|\mathbf{S}|\le n\epsilon$, the random vector
$\mathbf{S}$ takes ${n \choose \le n\epsilon}:=\sum_{i=1}^{\left\lfloor n\epsilon\right\rfloor }{n \choose i}$
distinct vector values. By Sanov's theorem, ${n \choose \le n\epsilon}\le e^{nH(\epsilon)}$,
where $H(\epsilon)=-\epsilon\ln\epsilon-(1-\epsilon)\ln(1-\epsilon)$
is the binary entropy function. Combining this with \eqref{eq:-3-1}
yields that 
\begin{equation}
\max_{|\mathbf{s}|\le n\epsilon}\mathbb{P}\{\mathbf{X}\in A,\mathbf{S}=\mathbf{s}\}\ge e^{-n(\alpha+H(\epsilon)+o(1))}.\label{eq:-5-1}
\end{equation}
Let $\mathbf{s}^{*}$ be the optimal $\mathbf{s}$ attaining the maximum
in the above equation. By rearrangement of the coordinates of $\mathbf{s}^{*}$,
we assume $\mathbf{s}^{*}=(0,...,0,1,...,1)$ such that $|\mathbf{s}^{*}|\le n_{1}:=n\epsilon$.
(For brevity, $n\epsilon$ is assumed to be an integer, otherwise,
denote $n_{1}:=\left\lfloor n\epsilon\right\rfloor $ and our proof
still works.) Let $n_{0}=n-n_{1}=(1-\epsilon)n$. Then, \eqref{eq:-5-1}
implies that $A\cap(F^{n_{0}}\times\mathcal{X}^{n_{1}})$ has probability
at least $e^{-n(\alpha+H(\epsilon)+o(1))}$.

By Chernoff's inequality, for $\mathbf{X}_{1}\sim\nu^{\otimes n_{1}}$,
\[
\mathbb{P}\{d^{2}(\mathbf{X}_{1},\mathbf{v}_{0})>n\beta\}\le e^{-n_{1}\left(\lambda\beta/\epsilon-\Psi(\lambda)\right)}=e^{-n\left(\lambda\beta-\epsilon\Psi(\lambda)\right)},
\]
where $\mathbf{v}_{0}=(x_{0},...,x_{0})\in\mathcal{X}^{n_{1}}$, $\Psi(\lambda):=\ln\mathbb{E}_{\nu}[e^{\lambda d^{2}(X,x_{0})}]$
is the logarithmic moment generating function of $d^{2}(X,x_{0})$,
and $\lambda$ is chosen such that $\Psi(\lambda)<\infty$ (by Condition
\ref{cond:integrability}, such $\lambda$ exists).

In the case that the condition \eqref{eq:Gaussian-2} holds, we choose
$\beta$ large enough so that $\lambda\beta-\epsilon\Psi(\lambda)>\alpha+H(\epsilon)$
(e.g., $\beta=\beta_{\alpha}:=\frac{\alpha+1+\Psi(\lambda)}{\lambda}$).
In the case that the condition \eqref{eq:subGaussian} holds, for
any small $\beta>0$, we can choose $\lambda$ large enough and $\epsilon$
small enough so that $\lambda\beta-\epsilon\Psi(\lambda)>\alpha+H(\epsilon)$.
In both these two cases, $A\cap(F^{n_{0}}\times B_{\sqrt{n\beta}}(\mathbf{v}_{0}))$
has probability at least $e^{-n(\alpha+H(\epsilon)+o(1))}$.

\subsubsection{Step 2: Covering Argument}

We next quantize the ball $B_{\sqrt{n\beta}}(\mathbf{v}_{0})$ by
a covering argument. Specifically, we cover the ball $B_{\sqrt{n\beta}}(\mathbf{v}_{0})$
by using a collection of small balls of radius $\sqrt{n\delta}$ for
a given small $\delta>0$. In the case that the condition \eqref{eq:subGaussian}
holds, $\beta$ can be arbitrarily small. Thus, in this case, we can
set $\beta\le\delta$ and $B_{\sqrt{n\beta}}(\mathbf{v}_{0})$ is
covered by itself.

We next deal with the case that the condition \eqref{eq:Gaussian-2}
holds. In this case, we need the covering bound assumption (Condition
\ref{cond:covering}). By this assumption, for any $\delta>0$, there
is a cover of $B_{\sqrt{n\beta}}(\mathbf{v}_{0})$ consisting of balls
$B_{\sqrt{n\delta}}(\mathbf{v}_{i})$ with $\mathbf{v}_{i}\in\mathcal{X}^{n_{1}}$
and $1\le i\le N$, where $N=e^{n\epsilon'}$ with $\epsilon'=\epsilon'_{n,\epsilon,\beta,\delta}$
satisfying 
\[
\lim_{\epsilon\to0}\limsup_{n\to\infty}\epsilon'_{n,\epsilon,\beta,\delta}=0,\;\forall\beta>\delta>0.
\]
Thus, there is at least one small ball $B:=B_{\sqrt{n\delta}}(\mathbf{v}_{i^{*}})$
for some $i^{*}$ such that 
\[
\tilde{A}:=A\cap(F^{n_{0}}\times B)
\]
has probability at least $e^{-n(\alpha+H(\epsilon)+\epsilon'+o(1))}$.
Note that for fixed $\beta,\delta$, the exponent of this probability
approaches $\alpha$, as $n\to\infty$ first and $\epsilon\to0$ then.

Denote $\tilde{A}_{0}:=\{\mathbf{u}\in F^{n_{0}}:\exists\mathbf{v},(\mathbf{u},\mathbf{v})\in\tilde{A}\}$
as the the projection of $\tilde{A}$ on the first $n_{0}$ components.
Then, $\tilde{A}_{0}\times B$ is a good isoperimetric approximation
of $A$ in the sense that $\tilde{A}_{0}\times B$ has probability
at least $e^{-n(\alpha+H(\epsilon)+\epsilon'+o(1))}$ (since $\tilde{A}\subseteq\tilde{A}_{0}\times B$)
and 
\begin{align}
A^{\sqrt{n\tau}}\supseteq\tilde{A}^{\sqrt{n\tau}} & =\bigcup_{\mathbf{x}\in\tilde{A}}B_{\sqrt{n\tau}}(\mathbf{x})\nonumber \\
 & \supseteq\bigcup_{(\mathbf{u},\mathbf{v})\in\tilde{A}}B_{\sqrt{n\tau_{0}}}(\mathbf{u})\times B_{\sqrt{n\tau_{1}}+2\sqrt{n\delta}}(\mathbf{v})\nonumber \\
 & \supseteq\bigcup_{(\mathbf{u},\mathbf{v})\in\tilde{A}}B_{\sqrt{n\tau_{0}}}(\mathbf{u})\times B_{\sqrt{n\tau_{1}}+\sqrt{n\delta}}(\mathbf{v}_{i^{*}})\nonumber \\
 & =\tilde{A}_{0}^{\sqrt{n\tau_{0}}}\times B^{\sqrt{n\tau_{1}}},\label{eq:-1-1}
\end{align}
where $\tau_{0},\tau_{1}$ are arbitrary nonnegative numbers such
that $\tau_{0}+\left(\sqrt{\tau_{1}}+2\sqrt{\delta}\right)^{2}\le\tau$.
Let $\alpha_{0,n}=-\frac{1}{n}\ln\nu^{\otimes n_{0}}(\tilde{A}_{0})$
and $\alpha_{1,n}=-\frac{1}{n}\ln\nu^{\otimes n_{1}}(B)$. By passing
to subsequence, we can assume that $\alpha_{0,n}\to\alpha_{0}$ and
$\alpha_{1,n}\to\alpha_{1}$ as $n\to\infty$ for some nonnegative
numbers $\alpha_{0},\alpha_{1}$ such that $\alpha_{0}+\alpha_{1}\le\alpha+H(\epsilon)+\epsilon'$.

\subsubsection{Step 3: Information-Theoretic Bound }

We next use information-theoretic techniques to derive bounds. Denote
$\nu_{F}=\nu(\cdot|F)$. Applying Theorem \ref{thm:LD} with substitution
$(n,\alpha,\tau,\nu_{X},\nu_{Y},A)\leftarrow(n_{0},\alpha_{0},\tau_{0},\nu_{F},\nu,\tilde{A}_{0})$
yields that given any $\delta>0$, there is a probability measure
$\pi_{XW}\in\P_{\mathrm{c}}(\mathcal{X}\times\{0,1\})$ induced by
the set $\tilde{A}_{0}$ such that $D(\pi_{X|W}\|\nu_{F}|\pi_{W})\le\frac{\alpha_{0}}{1-\epsilon}+\delta$
and for sufficiently large $n$, 
\begin{equation}
-\frac{1}{n_{0}}\ln\nu^{\otimes n_{0}}(\tilde{A}_{0}^{\sqrt{n\tau_{0}}})\le\inf_{\substack{\pi_{Y|XW}:\mathbb{E}[d^{2}(X,Y)]\le\tau_{0}-\delta,\\
\mathbb{E}[\mathrm{Var}(d^{2}(X,Y)|X,W)]\le\kappa
}
}(1+\delta)D(\pi_{Y|W}\|\nu|\pi_{W})+\delta.\label{eq:-35}
\end{equation}
We choose $\delta$ here as the same $\delta$ in the radius of the
small ball described above.

We next derive a multiletter bound for exponent of probability of
$B^{\sqrt{n\tau_{1}}}$. Let $\mu_{\mathbf{X}}=\nu^{\otimes n_{1}}(\cdot|B)$
be the conditional probability measure of $\nu^{\otimes n_{1}}$ given
$B$. Then, 
\begin{equation}
D(\mu_{\mathbf{X}}\|\nu^{\otimes n_{1}})=-\log\nu^{\otimes n_{1}}(B).\label{eq:-4-1}
\end{equation}

Let $\mu_{\mathbf{Y}|\mathbf{X}}$ be a transition probability defined
from $\mathcal{X}^{n_{1}}$ to $\mathcal{Y}^{n_{1}}$ such that given
each $\mathbf{x}$, $\mu_{\mathbf{Y}|\mathbf{X}=\mathbf{x}}$ is concentrated
on the cost ball $B_{\sqrt{n\tau_{1}}}(\mathbf{x}):=\{\mathbf{y}:d(\mathbf{x},\mathbf{y})\le\sqrt{n\tau_{1}}\}$.
Then, we have that $\mu_{\mathbf{Y}}:=\mu_{\mathbf{X}}\circ\mu_{\mathbf{Y}|\mathbf{X}}$
is concentrated on $B^{\sqrt{n\tau_{1}}}$, which implies that $-\frac{1}{n_{1}}\ln\nu^{\otimes n_{1}}(B^{\sqrt{n\tau_{1}}})\le\frac{1}{n_{1}}D_{0}(\mu_{\mathbf{Y}}\|\nu^{\otimes n_{1}})\leq\frac{1}{n_{1}}D(\mu_{\mathbf{Y}}\|\nu^{\otimes n_{1}})$.
Here $D_{0}(\mu\|\nu):=-\ln\nu\{\frac{\mathrm{d}\mu}{\mathrm{d}\nu}>0\}$
is the Rényi divergence of order $0$, which is no greater than the
relative entropy $D(\mu\|\nu)$ \cite{Erven}. Since $\mu_{\mathbf{Y}|\mathbf{X}}$
is arbitrary, we have that 
\begin{equation}
-\log\nu^{\otimes n_{1}}(B^{\sqrt{n\tau_{1}}})\le\inf_{\mu_{\mathbf{Y}|\mathbf{X}}:d(\mathbf{X},\mathbf{Y})\le t_{1}\;\mu_{\mathbf{XY}}\textrm{-a.e.}}D(\mu_{\mathbf{Y}}\|\nu^{\otimes n_{1}}).\label{eq:-25-1}
\end{equation}

We now construct a feasible solution to the infimization above. Given
$M,\tau_{1},\delta$, and $\mu_{\mathbf{X}}$, let $\mu_{Y|X}$ be
a transition probability such that the induced joint distribution
$\mu_{\mathbf{X}\mathbf{Y}}=\mu_{\mathbf{X}}\mu_{Y|X}^{\otimes n_{1}}$
satisfies 
\begin{equation}
\theta(\mathbf{x}):=\sum_{i=1}^{n_{1}}\mathbb{E}_{\mu}[d^{2}(x_{i},Y_{i})|x_{i}]\le n(\tau_{1}-\delta)\;\mu_{\mathbf{X}}\textrm{-a.e.}\textrm{ and }\;\sum_{i=1}^{n_{1}}\mathrm{Var}_{\mu}(d^{2}(x_{i},Y_{i})|x_{i})\le nM\;\mu_{\mathbf{X}}\textrm{-a.e.}\label{eq:-30-1}
\end{equation}

In the following, we modify $\mu_{\mathbf{Y}|\mathbf{X}}$ by truncating
it into some high-probability set, in order to make it satisfy the
constraint $d(\mathbf{X},\mathbf{Y})\le\sqrt{n\tau_{1}}$ $\mu_{\mathbf{XY}}\textrm{-a.e.}$

Define 
\[
\theta(x)=\mathbb{E}_{\mu}\left[d^{2}(x,Y)|x\right].
\]
Then, $\theta(\mathbf{x})=\sum_{i=1}^{n_{1}}\theta(x_{i}).$

By Chebyshev's inequality, it holds that 
\begin{align*}
\mu\{\mathbf{Y}\notin B_{\sqrt{n\tau_{1}}}(\mathbf{x})|\mathbf{x}\} & =\mu\{d(\mathbf{x},\mathbf{Y})>\sqrt{n\tau_{1}}|\mathbf{x}\}\le\frac{\mathbb{E}_{\mu}\left[(d^{2}(\mathbf{x},\mathbf{Y})-\theta(x_{i}))^{2}|\mathbf{x}\right]}{(n\tau_{1}-\theta(\mathbf{x}))^{2}}\\
 & =\frac{\sum_{i=1}^{n_{1}}\mathbb{E}_{\mu}\left[(d^{2}(x_{i},Y_{i})-\theta(x_{i}))^{2}|x_{i}\right]}{(n\tau_{1}-\theta(\mathbf{x}))^{2}}\\
 & =\frac{\sum_{i=1}^{n_{1}}\mathrm{Var}_{\mu}(d^{2}(x_{i},Y_{i})|x_{i})}{(n\tau_{1}-\theta(\mathbf{x}))^{2}}\le\frac{M}{n\delta^{2}}=:\epsilon_{n}.
\end{align*}
Here given $M,\delta$, the sequence $\epsilon_{n}$ vanishes if we
let $n\to\infty$.

Denote $\hat{\mu}_{\mathbf{Y}|\mathbf{X}}$ as a transition probability
given by 
\begin{align*}
 & \hat{\mu}_{\mathbf{Y}|\mathbf{X}=\mathbf{x}}=\mu_{\mathbf{Y}|\mathbf{X}=\mathbf{x}}(\cdot|B_{\sqrt{n\tau_{1}}}(\mathbf{x})),\;\forall\mathbf{x}.
\end{align*}
That is, $\hat{\mu}_{\mathbf{Y}|\mathbf{X}}(A|\mathbf{x})=\frac{\mu_{\mathbf{Y}|\mathbf{X}}(A\cap B_{\sqrt{n\tau_{1}}}(\mathbf{x})|\mathbf{x})}{\mu_{\mathbf{Y}|\mathbf{X}}(B_{\sqrt{n\tau_{1}}}(\mathbf{x})|\mathbf{x})}$
for all measurable $A$. (Given measurable $A$, the measurability
$\mathbf{x}\mapsto\hat{\mu}_{\mathbf{Y}|\mathbf{X}}(A|\mathbf{x})$
is well known; see e.g., \cite[Theorem 18.3]{billingsley2008probability}.)
We can split $\mu_{\mathbf{Y}|\mathbf{X}}$ as a mixture: 
\begin{align*}
\mu_{\mathbf{Y}|\mathbf{X}} & =(1-\epsilon_{n})\hat{\mu}_{\mathbf{Y}|\mathbf{X}=\mathbf{x}}+\epsilon_{n}\tilde{\mu}_{\mathbf{Y}|\mathbf{X}=\mathbf{x}}
\end{align*}
for some transition probability $\tilde{\mu}_{\mathbf{Y}|\mathbf{X}}$.
For the same input distribution $\mu_{\mathbf{X}}$, the output distributions
of channels $\mu_{\mathbf{Y}|\mathbf{X}}$, $\hat{\mu}_{\mathbf{Y}|\mathbf{X}}$,
and $\tilde{\mu}_{\mathbf{Y}|\mathbf{X}}$ are respectively denoted
as $\mu_{\mathbf{Y}}$, $\hat{\mu}_{\mathbf{Y}}$, and $\tilde{\mu}_{\mathbf{Y}}$,
which satisfy 
\[
\mu_{\mathbf{Y}}=(1-\epsilon_{n})\hat{\mu}_{\mathbf{Y}}+\epsilon_{n}\tilde{\mu}_{\mathbf{Y}}.
\]

Denote $J\sim\mu_{J}:=\mathrm{Bern}(\epsilon_{n})$, and $\mu_{\mathbf{Y}|J=1}=\hat{\mu}_{\mathbf{Y}},\mu_{\mathbf{Y}|J=0}=\tilde{\mu}_{\mathbf{Y}}$.
Then, 
\[
\mu_{\mathbf{Y}}=\mu_{J}(1)\mu_{\mathbf{Y}|J=1}+\mu_{J}(0)\mu_{\mathbf{Y}|J=0}.
\]

Observe that 
\begin{align*}
D(\mu_{\mathbf{Y}|J}\|\nu^{\otimes n_{1}}|\mu_{J}) & =(1-\epsilon_{n})D(\hat{\mu}_{\mathbf{Y}}\|\nu^{\otimes n_{1}})+\epsilon_{n}D(\tilde{\mu}_{\mathbf{Y}}\|\nu^{\otimes n_{1}})\\
 & \ge(1-\epsilon_{n})D(\hat{\mu}_{\mathbf{Y}}\|\nu^{\otimes n_{1}}).
\end{align*}
On the other hand, 
\begin{align*}
D(\mu_{\mathbf{Y}|J}\|\nu^{\otimes n_{1}}|\mu_{J}) & =D(\mu_{\mathbf{Y}}\|\nu^{\otimes n_{1}})+D(\mu_{\mathbf{Y}|J}\|\mu_{\mathbf{Y}}|\mu_{J})\\
 & \le D(\mu_{\mathbf{Y}}\|\nu^{\otimes n_{1}})+\ln2,
\end{align*}
where the inequality follows since $D(\mu_{\mathbf{Y}|J}\|\mu_{\mathbf{Y}}|\mu_{J})=I_{\mu}(J;\mathbf{Y})\le H_{\mu}(J)\le\ln2.$
Hence, 
\begin{equation}
D(\hat{\mu}_{\mathbf{Y}}\|\nu^{\otimes n_{1}})\le\frac{D(\mu_{\mathbf{Y}}\|\nu^{\otimes n_{1}})+\ln2}{1-\epsilon_{n}}.\label{eq:-65}
\end{equation}

By choosing $Q_{\mathbf{Y}|\mathbf{X}}$ in \eqref{eq:-25-1} as the
feasible solution $\hat{Q}_{\mathbf{Y}|\mathbf{X}}$, we then have
that 
\begin{equation}
-\log\nu^{\otimes n_{1}}(B^{\sqrt{n\tau_{1}}})\le\inf\frac{D(\mu_{\mathbf{Y}}\|\nu^{\otimes n_{1}})+\log2}{1-\epsilon_{n}},\label{eq:-7-1}
\end{equation}
where the infimum is taken over all transition probabilities $\mu_{Y|X}$
satisfying \eqref{eq:-30-1}.

Combining \eqref{eq:-35} and \eqref{eq:-7-1} yields that given $\delta>0$,
for sufficiently large $n$ and sufficiently small $\epsilon$, it
holds that 
\begin{align}
 & -\log\nu^{\otimes n}(\tilde{A}_{0}^{\sqrt{n\tau_{0}}}\times B^{\sqrt{n\tau_{1}}})\nonumber \\
 & \le\inf(1+\delta)\left(n_{0}D(\pi_{Y|W}\|\nu|\pi_{W})+D(\mu_{\mathbf{Y}}\|\nu^{\otimes n_{1}})\right)+n\delta\label{eq:-27-1}
\end{align}
for some $(\pi_{XW},\mu_{\mathbf{X}})$ such that 
\[
n_{0}D(\pi_{X|W}\|\nu_{F}|\pi_{W})+D(\mu_{\mathbf{X}}\|\nu^{\otimes n_{1}})\le n\alpha',\quad\alpha':=\alpha+2\delta,
\]
where the infimum is taken over all distributions $(\pi_{Y|XW},\mu_{Y|X})$
satisfying 
\begin{align*}
\mathbb{E}_{\pi}[d^{2}(X,Y)] & \le\tau_{0}-\delta,\\
\mathbb{E}_{\pi}[\mathrm{Var}_{\pi}(d^{2}(X,Y)|X,W)] & \le\kappa,
\end{align*}
and \eqref{eq:-30-1}. Since $\tau_{0},\tau_{1}$ are arbitrary nonnegative
numbers such that $\tau_{0}+\left(\sqrt{\tau_{1}}+2\sqrt{\delta}\right)^{2}\le\tau$,
optimizing the bound over all such $\tau_{0},\tau_{1}$ yields that
the infimum can be taken over all distributions $(\pi_{Y|XW},\mu_{Y|X})$
satisfying 
\begin{align}
n_{0}\mathbb{E}_{\pi}[d^{2}(X,Y)]+\sum_{i=1}^{n_{1}}\mathbb{E}_{\mu}[d^{2}(x_{i},Y)|x_{i}] & \le n\tau'\;\mu_{\mathbf{X}}\textrm{-a.e.}\label{eq:}\\
\mathbb{E}_{\pi}[\mathrm{Var}_{\pi}(d^{2}(X,Y)|X,W)] & \le\kappa,\label{eq:-1}\\
\sum_{i=1}^{n_{1}}\mathrm{Var}_{\mu}(d^{2}(x_{i},Y)|x_{i}) & \le nM\;\mu_{\mathbf{X}}\textrm{-a.e.}
\end{align}
where $\tau'=\tau-5\delta-4\sqrt{\delta\tau}$.

\subsubsection{Step 4: Heat Flow}

We next construct a feasible solution to \eqref{eq:-27-1} by using
heat flow. Let $\mu_{X_{t}|X=x}$ be the heat flow starting from the
point $x$, which forms a transition probability. Let 
\[
\pi_{WXX_{t}}=\pi_{WX}\mu_{X_{t}|X}
\]
and let $\pi_{X_{t}|W}$ the conditional probability induced by this
joint probability. Let 
\begin{equation}
\pi_{WXX_{t}}^{*}=\pi_{WX}\pi_{X_{t}|XW}^{*}\label{eq:pi-star}
\end{equation}
where $\pi_{X_{t}|XW}^{*}$ is the optimal transport map attaining
$\W(\pi_{X|W},\pi_{X_{t}|W}|\pi_{W})$. Let $\mu_{\mathbf{X}\mathbf{X}_{t}}=\mu_{\mathbf{X}}\mu_{X_{t}|X}^{\otimes n_{1}}$.

We choose $(\pi_{Y|XW},\mu_{Y|X})=(\pi_{X_{t}|XW}^{*},\mu_{X_{t}|X})$
and choose the time $t$ properly to satisfy \eqref{eq:}. Let $\omega$
be a function given by 
\[
\W^{2}(t):=n_{0}\W^{2}(\pi_{X|W},\pi_{X_{t}|W}|\pi_{W})+\W^{2}(\mu_{\mathbf{X}},\mu_{\mathbf{X}_{t}}),
\]
which is continuous in $t\ge0$. By Remark \ref{rem:Under}, $\nu$
satisfies a log-Sobolev inequality with a constant $K>0$. Let $t_{0}=\frac{1}{2K}\ln\frac{\alpha'}{\delta}$.
Let $\tau''$ be a fixed number $\le\tau$ which will be specified
later. We choose the time $t=t_{0}$ if $\W(t_{0})\le\sqrt{n\tau''}$,
and choose $t$ as the smallest time $t_{1}$ such that $\W(t_{1})=\sqrt{n\tau''}$
if $\W(t_{0})>\sqrt{n\tau''}$. Hence, $\W(t)\le\sqrt{n\tau''}$.

We next verify the two requirements---the expectation requirement
and variance requirement in \eqref{eq:} and \eqref{eq:-1} for $(\mu_{X_{t}|X}\circ\mu_{X|W},\mu_{X_{t}|X})$,
where $\mu_{X_{t}|X}\circ\mu_{X|W}$ denotes the resulting transition
probability by cascading $\mu_{X|W}$ and $\mu_{X_{t}|X}$. We first
focus on the expectation requirement. By the triangle inequality,
we obtain for any $\mathbf{x},\mathbf{x}'\in B$, 
\begin{align}
\W\left(\mu_{\mathbf{X}_{t}|\mathbf{X}=\mathbf{x}},\delta_{\mathbf{x}}\right) & \le\W\left(\mu_{\mathbf{X}_{t}|\mathbf{X}=\mathbf{x}},\mu_{\mathbf{X}_{t}|\mathbf{X}=\mathbf{x}'}\right)+\W\left(\mu_{\mathbf{X}_{t}|\mathbf{X}=\mathbf{x}'},\delta_{\mathbf{x}}\right)\nonumber \\
 & \le\sqrt{n\delta}+\W\left(\mu_{\mathbf{X}_{t}|\mathbf{X}=\mathbf{x}'},\delta_{\mathbf{x}}\right),\label{eq:-2-1}
\end{align}
where $\W\left(\mu_{\mathbf{X}_{t}|\mathbf{X}=\mathbf{x}},\mu_{\mathbf{X}_{t}|\mathbf{X}=\mathbf{x}'}\right)\le\W\left(\delta_{\mathbf{x}},\delta_{\mathbf{x}'}\right)\le\sqrt{n\delta}$
follows by contraction of Wasserstein metric under $\mathrm{RCD}(0,\infty)$
given in \cite[(6.1)]{AmbrosioGigliSavare14} or \cite[Corollary 3.18]{Ambrosio2015BakryEmery}.
Taking expectation $\mathbb{E}_{\mathbf{X}'\sim\mu_{\mathbf{X}}}$
and by Jensen's inequality, 
\begin{align*}
\W\left(\mu_{\mathbf{X}_{t}|\mathbf{X}=\mathbf{x}},\delta_{\mathbf{x}}\right) & \le\mathbb{E}_{\mathbf{X}'\sim\mu}[\W\left(\mu_{\mathbf{X}_{t}|\mathbf{X}=\mathbf{X}'},\delta_{\mathbf{x}}\right)]+\sqrt{n\delta}\\
 & \le\sqrt{\mathbb{E}_{\mathbf{X}'\sim\mu}[\W^{2}\left(\mu_{\mathbf{X}_{t}|\mathbf{X}=\mathbf{X}'},\delta_{\mathbf{x}}\right)]}+\sqrt{n\delta}\\
 & =\sqrt{\mathbb{E}_{\mathbf{X}\sim\mu}\sum_{i=1}^{n_{1}}\mathbb{E}_{\mu}[d^{2}(x_{i},X_{i,t})|X_{i}]}+\sqrt{n\delta}\\
 & =\sqrt{\sum_{i=1}^{n_{1}}\mathbb{E}_{\mu}[d^{2}(x_{i},X_{i,t})]}+\sqrt{n\delta}\\
 & =\W\left(\delta_{\mathbf{x}},\mu_{\mathbf{X}_{t}}\right)+\sqrt{n\delta}.
\end{align*}
Therefore, 
\begin{align*}
\sqrt{\sum_{i=1}^{n_{1}}\mathbb{E}_{\mu}[d^{2}(x_{i},X_{i,t})|x_{i}]} & =\W\left(\mu_{\mathbf{X}_{t}|\mathbf{X}=\mathbf{x}},\delta_{\mathbf{x}}\right)\le\W\left(\delta_{\mathbf{x}},\mu_{\mathbf{X}_{t}}\right)+\sqrt{n\delta}\\
 & \le\W\left(\mu_{\mathbf{X}},\mu_{\mathbf{X}_{t}}\right)+2\sqrt{n\delta},
\end{align*}
where the last inequality follows by the triangle inequality again.
Moreover, $\mathbb{E}_{\pi^{*}}[d^{2}(X,Y)]=\W^{2}(\pi_{X|W},\pi_{X_{t}|W}|\pi_{W})$.
Therefore, combining these with $\W(t)\le\sqrt{n\tau''}$ yields 
\[
n_{0}\mathbb{E}_{\pi}[d^{2}(X,Y)]+\sum_{i=1}^{n_{1}}\mathbb{E}_{\mu}[d^{2}(x_{i},Y)|x_{i}]\le n(\tau''+4\delta+4\sqrt{\delta\tau})\;\mu_{\mathbf{X}}\textrm{-a.e.}
\]
Choosing $\tau''=\tau-9\delta-8\sqrt{\delta\tau}$ yields the expectation
requirement in \eqref{eq:}.

We next focus on the variance requirement. Under $\mathrm{CD}(0,\infty)$,
the gradient flow (heat flow) $\rho_{t}$ (given in \eqref{eq:gradCh})
of Cheeger's energy on the Hilbert space $L^{2}(\mathcal{X},\nu)$
is identical to the gradient flow $\mu_{t}$ of relative entropy on
the Wasserstein metric space in the sense that $\mu_{t}=\rho_{t}\nu$
when $\mu_{0}=\rho_{0}\nu$ \cite[Theorem 9.3]{ambrosio2014calculus}.
In this case, denote $P_{t}$ as the Markov (heat) semigroup operator
given by $P_{t}:\rho_{0}\mapsto\rho_{t}$. If in addition, the space
satisfies $\mathrm{RCD}(0,\infty)$, then these two flows satisfy
the equation $\frac{\mathrm{d}}{\mathrm{d}t}\rho_{t}=\Delta_{\mathcal{E}}\rho_{t}$
for all $t>0$ where $\Delta_{\mathcal{E}}$ is the infinitesimal
generator of $P_{t}$ and $\rho_{t}=P_{t}\rho_{0}$ \cite[Theorem 2.13 and Section 6.1]{AmbrosioGigliSavare14},
and moreover, in this case, Bakry-Émery curvature-dimension condition
$\mathrm{BE}(0,\infty)$ is satisfied by the semigroup $P_{t}$ \cite[Theorem 4.17]{Ambrosio2015BakryEmery}.

By the local Poincaré inequalities in \cite[Corollary 2.3, Definition 2.4, Theorem 4.17]{Ambrosio2015BakryEmery}\cite[Theorem 4.7.2]{bakry2013analysis},
under $\mathrm{RCD}(0,\infty)$ (equivalently, $\mathrm{BE}(0,\infty)$),
it holds that for every $f\in W^{1,2}(\mathcal{X},d,\nu)$ with the
Sobolev space $W^{1,2}(\mathcal{X},d,\nu)$ defined around \eqref{eq:-32}
and for $t>0$, 
\[
P_{t}(f^{2})-(P_{t}f)^{2}\le2tP_{t}(|\nabla f|_{*}^{2}),\;\nu\textrm{-a.e. in }\mathcal{X},
\]
where $|\nabla f|_{*}$ is the minimal weak gradient defined below
\eqref{eq:-32} and $f\mapsto|\nabla f|_{*}^{2}$ forms the Carré
du champ $\Gamma$ induced by the Dirichlet form $\mathcal{E}=2\mathrm{Ch}$.
That is, for every $f\in W^{1,2}(\mathcal{X},d,\nu)$, 
\begin{equation}
\mathrm{Var}_{\mu}(f(X_{t})|X=x)\le2t\mathbb{E}_{\mu}[|\nabla f|_{*}^{2}(X_{t})|X=x],\;\nu\textrm{-a.e. }x\in\mathcal{X}.\label{eq:-34}
\end{equation}

Let $f(y)=d^{2}(x,y)$ and for $N>0$, let $f_{N}(y)=\min\{f(y),N^{2}\}=\left(\min\{d(x,y),N\}\right)^{2}$.
Then, $|\nabla f_{N}|\le2\min\{d(x,y),N\}$ and hence $f_{N}\in W^{1,2}(\mathcal{X},d,\nu)$.
Substituting this function into \eqref{eq:-34} and noting that $|\nabla f_{N}|_{*}\le|\nabla f_{N}|\le2d(x,y)$
$\nu$-a.e. $y$, we obtain that 
\[
\mathrm{Var}_{\mu}(f_{N}(X_{t})|X=x)\le8t\mathbb{E}_{\mu}[d^{2}(x,X_{t})|X=x],\;\nu\textrm{-a.e. }x\in\mathcal{X}.
\]
Letting $N\to\infty$ and by the monotone convergence theorem, for
$\nu\textrm{-a.e. }x\in\mathcal{X}$, 
\begin{equation}
\mathrm{Var}_{\mu}(d^{2}(x,X_{t})|X=x)\le8t\mathbb{E}_{\mu}[d^{2}(x,X_{t})|X=x].\label{eq:-6}
\end{equation}
Using the triangle inequality, for any $x_{0},x\in F$, 
\begin{align}
\sqrt{\mathbb{E}_{\mu}[d^{2}(x_{0},X_{t})|X=x]} & =\W(P_{t}\delta_{x},\delta_{x_{0}})\nonumber \\
 & \le\W(P_{t}\delta_{x},\nu)+\W(\nu,\delta_{x_{0}})\nonumber \\
 & \le\W(\delta_{x},\nu)+\W(\nu,\delta_{x_{0}})\nonumber \\
 & \le\W(\delta_{x},\delta_{x_{0}})+2\W(\delta_{x_{0}},\nu)\nonumber \\
 & \le\mathrm{diam}(F)+2\W(\delta_{x_{0}},\nu)=:C,\label{eq:-28}
\end{align}
where the second inequality follows by $\W(P_{t}\delta_{x},\nu)\le\W(\delta_{x},\nu)$,
contraction of Wasserstein metric in \cite[(6.1)]{AmbrosioGigliSavare14}.
Noting that $C<\infty$, $\mathbb{E}_{\mu}[d^{2}(x_{0},X_{t})|X=x]$
is upper bounded by the finite number $C$ uniformly for all $x\in F$.
We then obtain from \eqref{eq:-6} that 
\begin{equation}
\mathbb{E}_{\mu}[d^{4}(x_{0},X_{t})|X=x]\le C^{4}+8tC^{2}.\label{eq:-6-1-1}
\end{equation}

Moreover, by triangle inequality and Minkowski inequality, for any
$x_{0}\in F$, 
\begin{align*}
\mathbb{E}_{\pi^{*}}[d^{4}(X,X_{t})]^{1/4} & \le\mathbb{E}_{\pi^{*}}[d^{4}(x_{0},X_{t})]^{1/4}+\mathbb{E}_{\pi^{*}}[d^{4}(X,x_{0})]^{1/4}\\
 & \le\mathbb{E}_{\pi}[d^{4}(x_{0},X_{t})]^{1/4}+\mathrm{diam}(F),
\end{align*}
where the last line follows since $\pi_{X_{t}}^{*}=\pi_{X_{t}}$,
according to \eqref{eq:pi-star}, and $d(x,x_{0})\le\mathrm{diam}(F)$
for all $x\in F$. By \eqref{eq:-6-1-1}, $\mathbb{E}_{\pi}[d^{4}(x_{0},X_{t})]\le C^{4}+8tC^{2}.$
Thus, 
\[
\mathbb{E}_{\pi^{*}}[d^{4}(X,X_{t})]^{1/4}\le(C^{4}+8tC^{2})^{1/4}+\mathrm{diam}(F).
\]
Note that we have truncated the time $t$ up to the finite value $t_{0}$,
implying $t\le t_{0}$. Hence, the first variance requirement is satisfied.
This gives the reason why we truncate $t$.

We next consider the second variance requirement. Substituting $x=x_{i}$
into \eqref{eq:-6} and taking summation over $i$, we obtain that
for $\mu_{\mathbf{X}}$-a.e. $\mathbf{x}$, 
\[
\sum_{i=1}^{n_{1}}\mathrm{Var}_{\mu}(d^{2}(x_{i},X_{i,t})|X_{i}=x_{i})\le8t\sum_{i=1}^{n_{1}}\mathbb{E}_{\mu}[d^{2}(x_{i},X_{i,t})|X_{i}=x_{i}]\le8tn(\tau-\delta),
\]
where the last inequality above follows by the expectation condition
in \eqref{eq:}. Since $t\le t_{0}$, the second variance requirement
is satisfied.

Since all requirements are satisfied by the transition probability
induced by heat flow, we obtain a feasible solution to \eqref{eq:-27-1}.

\subsubsection{Step 5: Final Bound}

We next analyze the relative entropy induced by the heat flow. Let
\[
D(t):=n_{0}D(\pi_{X_{t}|W}\|\nu|\pi_{W})+D(\mu_{\mathbf{X}_{t}}\|\nu^{\otimes n_{1}}).
\]

We first show that $D(t)\le n\delta$ if $t=t_{0}$. Since $\nu$
admits a log-Sobolev inequality with constant $K$, by exponential
decay in entropy \cite[Theorem 5.2.1]{bakry2013analysis}, we have
that in this case, 
\begin{equation}
D(t_{0})\le e^{-2Kt_{0}}D(0)\le e^{-2Kt_{0}}n\alpha=n\delta,\label{eq:-36}
\end{equation}
where the last equality is due to the choice of $t_{0}$.

We next upper bound the relative entropy for $t=t_{1}$. By the chain
rule for relative entropy and Fisher information and the fact that
conditioning increases relative entropy, the following kind of tensorization
property can be proven. 
\begin{thm}
\cite{polyanskiy2019improved} Assume that $\nu$ admits a $\theta$-nonlinear
log-Sobolev inequality for some convex function $\theta$. Then, $\nu^{\otimes n}$
admits a $\theta_{n}$-nonlinear log-Sobolev inequality where $\theta_{n}(t)=n\theta(t/n),\forall t\ge0$. 
\end{thm}

By this theorem and Theorem \ref{thm:Sobolev-Talagrand}, we obtain
that 
\begin{align}
\sqrt{n\tau''}=\W(t) & \le\int_{D(t)}^{D(0)}\frac{1}{\sqrt{n\theta(r/n)}}\d r=\sqrt{n}\int_{D(t)/n}^{D(0)/n}\frac{1}{\sqrt{\theta(t)}}\d t=\sqrt{n}\Upsilon\left(D(t)/n,\alpha\right).\label{eq:-37}
\end{align}

Substituting \eqref{eq:-36} and \eqref{eq:-37} into \eqref{eq:-27-1}
and letting $\delta\to0$ yield that $\limsup_{n\to\infty}E_{n}(\alpha,\tau)\le\bar{\psi}(\alpha,\tau).$

\subsection{Interpretation from Concentration of Measure}

The isoperimetric problem with thick boundary admits a natural interpretation
from the perspective of concentration of measure. Concentration of
measure principle states that for any set of not too small probability,
slightly enlarging this set will always have probability close to
one. Formally, let $r(a)$ be the minimum number such that the $r$-enlargement
of any set of measure $a$ will always have probability close to one.
Denote $a=e^{-n\alpha}$ and $r^{2}(a)=n\tau(\alpha)$. By our results,
for sufficiently large $n$, $\tau(\alpha)$ is the minimum $\tau$
such that $\psi(\alpha,\tau)=0$. Solving it yields $\tau\ge\W(\pi_{X|W},\nu|\pi_{W})$
for all $\pi_{XW}$ such that $D(\pi_{X|W}\|\nu|\pi_{W})\le\alpha$.
Hence, $\tau(\alpha)$ satisfies 
\begin{align*}
\tau(\alpha) & =\sup_{\pi_{XW}:D(\pi_{X|W}\|\nu|\pi_{W})\le\alpha}\W(\pi_{X|W},\nu|\pi_{W}),
\end{align*}
which is the upper concave envelope of $\tau_{0}$ given below 
\begin{align*}
\tau_{0}(\alpha): & =\sup_{\pi:D(\pi\|\nu)\le\alpha}\W(\pi,\nu).
\end{align*}
The curves $\tau$ and $\tau_{0}$ respectively characterize the best
convex and nonconvex tradeoffs between the relative entropy and the
Wasserstein metric. Thus, they determine the optimal nonlinear transport
inequalities.

Recall that $K_{C}$ is the optimal transport constant in the following
inequality: $\W(\pi,\nu)\le\sqrt{\frac{2}{K_{C}}D(\pi\|\nu)},\;\forall\pi.$
Hence, $\tau(\alpha)\le\frac{2\alpha}{K_{C}},\;\forall\alpha>0,$
and the constant $K_{C}$ is optimal. This implies that for sufficiently
large $n$, enlarging any set of measure $\exp(-\frac{K_{C}r^{2}}{2})$
by the distance $r$ will always make it have probability close to
one, and moreover, the constant $K_{C}$ is the optimal one satisfying
this property. In contrast, another (but indeed equivalent) interpretation
of $K_{C}$ in the concentration of measure is that for all $A$ such
that $\nu_{n}(A)=1/2$, it always holds that $\nu_{n}((A^{r})^{c})\le\exp(-\frac{K_{C}r^{2}}{2}),\;\forall r>0$.
Here, the constant $K_{C}$ cannot be improved either.

\section{From Variant Isoperimetry to Standard Isoperimetry}\label{sec:From-Variant-Isoperimetry}

We now turn back to the standard isoperimetric problem in the Riemannian
manifold, in which the boundary is the infinitely thin shell. We now
connect the isoperimetric problem with thick shell boundary considered
in the last section to this setting.

Recall that $\M$ is a $k$-dimensional Riemannian manifold equipped
with a reference probability measure $\nu=e^{-V}\mathrm{vol},V\in C^{2}(\M)$
and the induced geodesic distance $d$. Recall the boundary measure
defined in \eqref{eq:perimeter}. The boundary measure can be alternatively
expressed as 
\begin{align}
\nu_{n}^{+}(A) & =\liminf_{r\downarrow0}\frac{\nu_{n}(A^{r})-\nu_{n}(A)}{\ln\nu_{n}(A^{r})-\ln\nu_{n}(A)}\cdot\frac{\ln\nu_{n}(A^{r})-\ln\nu_{n}(A)}{r}\nonumber \\
 & =\nu_{n}(A)\liminf_{r\downarrow0}\frac{\ln[\nu_{n}(A^{r})/\nu_{n}(A)]}{r}.
\end{align}
However, to apply Theorem \ref{thm:LD2}, we need keep $r$ to be
large (in order of $\sqrt{n}$), and hence, we need remove $\liminf_{r\downarrow0}$
in the last line. To this end, we need some nice properties of isoperimetric
minimizers from geometric measure theory.

Here we adopt the notation used in \cite{milman2010isoperimetric}.
For a smooth hypersurface $S$, the second fundamental form is by
$\varPi_{S,p}^{\sigma}(u,v)=g(D_{u}v,v)$ for $u,v\in T_{p}S$, where
$D$ is the covariant derivative. According to this definition, the
second fundamental form of the sphere in Euclidean space with respect
to the outer normal $\sigma$ is positive definite. The total curvature
$H_{S}^{\sigma}(p)$ of $S$ at a point $p$ is the trace of the second
fundamental form $\varPi_{S,p}^{\sigma}$. (The mean curvature is
obtained if it is divided by the dimension of $S$.) 
\begin{defn}
The $\nu$-total curvature of a smooth hypersurface $S$ at a point
$p\in S$ with respect to the unit normal vector $\sigma$, denoted
$H_{S,\nu}^{\sigma}(p)$, is defined as 
\[
H_{S,\nu}^{\sigma}(p):=H_{S}^{\sigma}(p)-\langle\nabla V(p),\sigma(p)\rangle.
\]
\end{defn}

Bayle \cite{bayle2003proprietes} and Morgan \cite{morgan2005manifolds,morgan2016geometric}
showed that isoperimetric minimizers always exist and the isoperimetric
profile is concave. See an explicit explanation in \cite[p. 216]{milman2010isoperimetric}. 
\begin{thm}[Morgan \cite{morgan2003regularity,morgan2005manifolds,morgan2016geometric}
and Bayle \cite{bayle2003proprietes}]
\label{thm:DensityConcave} Under Condition \ref{cond:integrability},
the following hold. 
\begin{enumerate}
\item For any $a\in(0,1)$, there always exists an open isoperimetric minimizer
$A$ in $(\M^{n},\nu_{n})$ of given measure $a$ such that: (a) its
boundary $\partial A$ consists of disjoint regular part and singular
part; (b) the regular part is a $C^{2}$-smooth hypersurface (since
$V\in C^{2}(\M)$) which has a constant $\nu_{n}$-total curvature,
denoted by $H_{\nu_{n}}(A)$; (c) the singular part has dimension
strictly lower than that of the regular part, more precisely, lower
than or equal to the dimension of the regular part minus $7$. 
\item The isoperimetric profile $I_{n}$ is a concave function on $[0,1]$. 
\item For the isoperimetric minimizer given in Statement 1, the constant
$\nu_{n}$-total curvature $H_{\nu_{n}}(A)$ of the regular part of
its boundary satisfies 
\[
\lim_{\epsilon\downarrow0}\frac{I_{n}(a+\epsilon)-I_{n}(a)}{\epsilon}\le H_{\nu_{n}}(A)\le\lim_{\epsilon\downarrow0}\frac{I_{n}(a)-I_{n}(a-\epsilon)}{\epsilon}.
\]
In particular, if $I_{n}$ is differentiable at $a$, then $H_{\nu_{n}}(A)=I_{n}'(a)$. 
\end{enumerate}
\end{thm}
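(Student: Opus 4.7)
The plan is to handle the three statements in turn, drawing on the standard machinery of geometric measure theory transplanted to the weighted Riemannian setting $(\M^n,\nu_n)$. For Statement 1, I would run the direct method of the calculus of variations: take a minimizing sequence $\{A_k\}$ with $\nu_n(A_k)=a$ and $\nu_n^+(A_k)\to I_n(a)$. The uniform perimeter bound gives weak-{\small$*$} compactness in $BV(\M^n,\nu_n)$, so a subsequence converges in $L^1(\nu_n)$ to a set $A$ with $\nu_n(A)=a$, and lower semicontinuity of the weighted perimeter promotes $A$ to a minimizer. Interior regularity of $\partial A$ then follows by adapting the classical regularity theory of De Giorgi, Almgren, and Federer to the density $e^{-V}$: the reduced boundary is $C^{1,\alpha}$, and since $V\in C^2$, elliptic bootstrapping on the weighted mean curvature equation upgrades it to smooth. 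The Euler--Lagrange equation for the volume-constrained functional forces the regular part to have constant weighted total curvature $H_{\nu_n}$, and Federer's dimension-reduction argument applied to tangent cones bounds the singular set in codimension at least $8$.

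For Statement 2 (concavity of $I_n$) I would invoke the curvature-dimension hypothesis \eqref{eq:RicHess} through a second variation argument in the spirit of Bayle--Rosales, or equivalently via the Bakry--Ledoux semigroup technique. Concretely, perturbing the smooth portion of an isoperimetric minimizer by a normal field $u\,\sigma$ supported on $\partial^{\mathrm{reg}}A$ and using the Jacobi-type second variation formula under $\mathrm{CD}(0,\infty)$ yields, in the viscosity sense at regular values, a one-sided differential inequality of the form $I_n''(a)\,I_n(a)\le 0$, with the Bakry--\'Emery Ricci contribution non-negative. This forces $I_n$ to be concave on $[0,1]$; the concavity extends across non-regular values by approximation using Statement~1 applied to nearby measures.

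Statement 3 is then a quantitative consequence of Statement 1 and the first variation formula. Fix a minimizer $A$ with $\nu_n(A)=a$ whose regular boundary has constant weighted total curvature $H_{\nu_n}(A)$. For a nonnegative test function $u$ supported in $\partial^{\mathrm{reg}}A$, the normal flow $A_t$ satisfies
\begin{align*}
\tfrac{d}{dt}\big|_{t=0}\nu_n(A_t) &= \int_{\partial^{\mathrm{reg}}A} u\, d\sigma_{\nu_n}, \\
\tfrac{d}{dt}\big|_{t=0}\nu_n^+(A_t) &= H_{\nu_n}(A)\int_{\partial^{\mathrm{reg}}A} u\, d\sigma_{\nu_n},
\end{align*}
where $\sigma_{\nu_n}$ is the weighted surface measure on $\partial A$. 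Choosing $u\ge 0$ with $\int u\,d\sigma_{\nu_n}=\varepsilon$ gives a competitor $A_t$ of weighted volume $a+\varepsilon+o(\varepsilon)$ and perimeter $\nu_n^+(A)+H_{\nu_n}(A)\varepsilon+o(\varepsilon)$, so $I_n(a+\varepsilon)\le I_n(a)+H_{\nu_n}(A)\varepsilon+o(\varepsilon)$; the analogous inward perturbation yields $I_n(a-\varepsilon)\le I_n(a)-H_{\nu_n}(A)\varepsilon+o(\varepsilon)$. Dividing by $\varepsilon$ and passing to the limit produces the sandwich in the statement, which collapses to $H_{\nu_n}(A)=I_n'(a)$ at points of differentiability (which, by the concavity from Statement 2, exhaust $[0,1]$ up to a countable set).

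The main obstacle is the regularity theory underlying Statement 1: extending the almost-minimizing regularity of De Giorgi--Almgren--Federer to the weighted setting of a manifold with density requires verifying that the weighted perimeter is an almost-minimal integrand in Almgren's sense and that the associated Euler--Lagrange equation is uniformly elliptic on the regular part. The secondary difficulty is the concavity argument under $\mathrm{CD}(0,\infty)$, since a naive second variation is unavailable across the singular set and must be carried out in a viscosity sense after approximating the minimizer away from its low-dimensional singular locus, or bypassed through the Bakry--Ledoux semigroup monotonicity.
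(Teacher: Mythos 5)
This theorem is not proved in the paper at all: it is quoted as an external result of Bayle and Morgan (with the attribution in the theorem header), so there is no internal proof to compare against. Your outline is essentially a reconstruction of the standard route taken in that cited literature: direct method plus De Giorgi--Almgren--Federer regularity adapted to the density $e^{-V}$ for Statement 1, a second-variation (Bayle--Rosales / Bakry--\'Emery) argument under $\mathrm{CD}(0,\infty)$ for concavity, and the first-variation comparison for the curvature--derivative sandwich in Statement 3. The Statement 3 computation and the remark that the cutoff near the singular set is harmless because its codimension is at least $8$ are exactly the standard points, and they are fine as sketched.

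The one genuine gap is in your existence step. On a non-compact $\M^{n}$ the direct method as you state it does not work verbatim: a minimizing sequence with $\nu_{n}(A_{k})=a$ is only precompact in $L^{1}_{\mathrm{loc}}$, and the limit set may satisfy $\nu_{n}(A)<a$ because volume can escape to infinity; lower semicontinuity of the weighted perimeter alone does not prevent this. What rescues existence here is precisely the finiteness of the total weighted measure ($\nu_{n}$ is a probability measure): one must run a volume-recovery argument (Ritor\'e--Rosales / Morgan, Geometric Measure Theory, Ch.~13 style), showing that any volume lost at infinity can be reinstated at asymptotically negligible perimeter cost, or else invoke the existence theorem for manifolds with density of finite weighted volume directly --- which is part of what the cited Bayle--Morgan result supplies. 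As written, your sentence ``a subsequence converges in $L^{1}(\nu_{n})$ to a set $A$ with $\nu_{n}(A)=a$'' asserts the conclusion of that missing argument rather than proving it. The remaining secondary caveat you already flag yourself (second variation only in a weak/viscosity sense across the singular set, handled by cutoffs exploiting codimension $\ge 8$) is the correct fix and matches the literature.
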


\begin{thm}[{{{{Generalized Heintze-Karcher due to Morgan \cite[Theorem 2, Remark 3]{morgan2005manifolds}}}}}]
\label{thm:DensityConcave-2} Assume that for $\kappa\in\mathbb{R}$,
\[
\mathrm{Ric}_{g}+\mathrm{Hess}_{g}V\ge\kappa g.
\]
Let $A\subseteq\M^{n}$ denote an isoperimetric minimizer in $(\M^{n},\nu_{n})$
of given measure $a\in(0,1)$ given in Theorem \ref{thm:DensityConcave}.
Let $S$ denote the regular part of $\partial A$, and let $H_{\nu_{n}}(A)$
denote the constant $\nu_{n}$-total curvature of $S$ with respect
to the outer unit normal vector field $\sigma$ on $S$. Then for
any $r>0$, 
\[
\nu_{n}(A^{r})-\nu_{n}(A)\le\nu_{n}^{+}(A)\int_{0}^{r}\exp\left(H_{\nu_{n}}(A)t-\frac{\kappa}{2}t^{2}\right)\d t.
\]
\end{thm}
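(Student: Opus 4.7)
The plan is to parametrize the shell $A^{r}\setminus A$ by the normal exponential map issuing from the regular part $S$ of $\partial A$, and to control the weighted Jacobian of this parametrization through a weighted Heintze--Karcher estimate coming from the curvature bound $\mathrm{Ric}_{g}+\mathrm{Hess}_{g}V\ge\kappa g$. Specifically, for each $p\in S$ let $\gamma_{p}(t):=\exp_{p}(t\sigma(p))$ be the outward normal geodesic, and write $J_{\nu}(t,p)$ for the Jacobian of the map $(t,p)\mapsto\gamma_{p}(t)$ with respect to the product measure $\mathrm{d}t\otimes\mathrm{d}\sigma_{S}(p)$ on $[0,\infty)\times S$ and the measure $\nu_{n}$ on $\M^{n}$, where $\sigma_{S}$ is the Hausdorff surface measure on $S$. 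Then $J_{\nu}(0,p)=e^{-V(p)}$, and by the area formula together with the fact that the normal exponential map covers every point of $A^{r}\setminus A$ at least once (as long as the nearest-point projection to $\partial A$ is well defined, which fails only on a negligible set), we obtain
\[
\nu_{n}(A^{r})-\nu_{n}(A)\le\int_{S}\int_{0}^{r\wedge t_{\mathrm{cut}}(p)}J_{\nu}(t,p)\,\mathrm{d}t\,\mathrm{d}\sigma_{S}(p).
\]

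Next I would derive the Riccati-type evolution of the weighted mean curvature $H_{\nu}(t,p)$ of the parallel hypersurface $S_{t}$ along $\gamma_{p}$. Standard Riemannian computation gives $\dot{H}=-|\mathcal{A}|^{2}-\mathrm{Ric}(\dot\gamma,\dot\gamma)$ for the ordinary mean curvature, and since $H_{\nu}=H-\langle\nabla V,\sigma\rangle$ we pick up an additional $-\mathrm{Hess}\,V(\dot\gamma,\dot\gamma)$ term, yielding
\[
\dot H_{\nu}(t,p)\;\le\;-\bigl(\mathrm{Ric}+\mathrm{Hess}\,V\bigr)(\dot\gamma,\dot\gamma)\;\le\;-\kappa
\]
by the hypothesis. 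Since $H_{\nu}(0,p)=H_{\nu_{n}}(A)$ (the constant weighted mean curvature of the regular part of $\partial A$), integration gives $H_{\nu}(t,p)\le H_{\nu_{n}}(A)-\kappa t$. Because $\frac{\mathrm{d}}{\mathrm{d}t}\ln J_{\nu}(t,p)=H_{\nu}(t,p)$, a further integration produces the pointwise Jacobian bound
\[
J_{\nu}(t,p)\;\le\;e^{-V(p)}\exp\!\Bigl(H_{\nu_{n}}(A)\,t-\tfrac{\kappa}{2}t^{2}\Bigr),\qquad 0\le t<t_{\mathrm{cut}}(p).
\]

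Substituting this into the shell integral, separating the $t$-integral from the $p$-integral, and recognising
\[
\int_{S}e^{-V(p)}\,\mathrm{d}\sigma_{S}(p)\;=\;\nu_{n}^{+}(A)
\]
(the weighted surface measure of $S$ equals the full boundary measure of $A$, since the singular part has codimension at least seven and hence vanishing $(kn-1)$-dimensional Hausdorff measure by Theorem~\ref{thm:DensityConcave}) gives exactly the claimed inequality. The main technical obstacle is the handling of the singular part and of the cut locus: the singular part contributes nothing to $\nu_{n}^{+}(A)$ by dimension, and although $J_{\nu}$ is only well defined up to $t_{\mathrm{cut}}(p)$, the resulting restriction of the domain of integration only strengthens the inequality, because beyond the cut locus the normal exponential map stops being injective and the area formula gives a one-sided bound. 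A subsidiary subtlety is verifying that the sign conventions on the outer unit normal $\sigma$ and on the mean curvature make $\frac{\mathrm{d}}{\mathrm{d}t}\ln J_{\nu}(t,p)$ equal to (and not the negative of) $H_{\nu}(t,p)$; this is a bookkeeping point but crucial for the sign of the $\kappa t^{2}$ correction.
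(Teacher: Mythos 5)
The paper does not prove this statement at all: it is imported verbatim from Morgan \cite[Theorem 2, Remark 3]{morgan2005manifolds}, so there is no internal proof to compare against. What you have written is, in outline, exactly Morgan's generalized Heintze--Karcher argument: parametrize the shell $A^{r}\setminus A$ by normal geodesics from the regular part $S$, run the Riccati comparison for the weighted mean curvature using $\mathrm{Ric}_{g}+\mathrm{Hess}_{g}V\ge\kappa g$ (which tensorizes to $\M^{n}$ with potential $\sum_{i}V(x_{i})$), integrate twice to get the weighted Jacobian bound $J_{\nu}(t,p)\le e^{-V(p)}\exp\bigl(H_{\nu_{n}}(A)t-\tfrac{\kappa}{2}t^{2}\bigr)$ up to the cut time, and use that the weighted Hausdorff measure of $S$ equals $\nu_{n}^{+}(A)$ because the singular set has codimension at least $7$ in $\partial A$. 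The cut-locus truncation and the constancy of $H_{\nu_{n}}(A)$ on $S$ are handled correctly, and your remark about the sign convention for $\tfrac{\d}{\d t}\ln J_{\nu}$ is the right thing to check.

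The one step you pass over too quickly is the covering claim: you assert that the normal exponential map from $S$ reaches every point of $A^{r}\setminus A$ off a negligible set, attributing the exception only to non-uniqueness of the nearest-point projection. The real issue is different: for a point $y\in A^{r}\setminus A$, a distance-minimizing geodesic from $y$ to $\overline{A}$ could a priori land at a point of the \emph{singular} part of $\partial A$, and then $y$ is not in the image of the normal map from $S$ at all; the dimension bound on the singular set does not by itself exclude that a positive-measure set of exterior points projects onto it (a low-dimensional set can have a large normal cone). The standard way to close this, and the way Morgan's argument does, is regularity theory: at the foot of such a minimizing geodesic the boundary admits a supporting ball (hence a supporting half-space for the tangent cone), and for almost-minimizing boundaries a point whose tangent cone lies in a half-space is a regular point; therefore the foot lies in $S$ and the covering holds for every $y$ with a unique nearest point, i.e.\ off a $\nu_{n}$-null set. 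With that justification inserted, your proof is complete and coincides with the cited one.
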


Note that the sign of the total curvature $H_{S}^{\sigma}(p)$ here
is opposite to the one in \cite[Theorem 2, Remark 3]{morgan2005manifolds}.
Based on the two theorems above, we build a bridge between the standard
isoperimetric problem and the isoperimetric problem with thick boundary. 
\begin{thm}
\label{thm:isop-isop} Assume that Condition \ref{cond:convexity}
holds. Let $A\subseteq\M^{n}$ denote an isoperimetric minimizer in
$(\M^{n},\nu_{n})$ of given measure $a\in(0,1)$ given in Theorem
\ref{thm:DensityConcave}. Then, for any $r>0$, 
\[
\frac{I_{n}(a)}{a}=\frac{\nu_{n}^{+}(A)}{\nu_{n}(A)}\ge\frac{1}{r}\ln\frac{\nu_{n}(A^{r})}{\nu_{n}(A)}\ge\frac{1}{r}\ln\frac{\Gamma_{n}(a,r)}{a}.
\]
\end{thm}

\begin{proof}
Let $S$ denote the regular part of $\partial A$, and let $H_{\nu_{n}}(A)$
denote the constant $\nu_{n}$-total curvature of $S$ with respect
to the outer unit normal vector field $\sigma$ on $S$. Let 
\begin{align*}
P(r) & :=\nu_{n}^{+}(A)\exp\left(H_{\nu_{n}}(A)r\right),\\
V(r) & :=\nu_{n}(A)+\int_{0}^{r}P(t)\d t.
\end{align*}

Define $g(r):=V(r)P(0)-P(r)V(0),$ whose derivative satisfies 
\begin{equation}
g'(r)=P(r)P(0)-P'(r)V(0)=P(r)P(0)-H_{\nu_{n}}(A)P(r)V(0).\label{eq:-61}
\end{equation}
From Theorem \ref{thm:DensityConcave}, it is known that 
\[
H_{\nu_{n}}(A)\le\lim_{\epsilon\downarrow0}\frac{I_{n}(a)-I_{n}(a-\epsilon)}{\epsilon},
\]
where $a=\nu_{n}(A)$. Since $I_{n}$ is concave, 
\[
\lim_{\epsilon\downarrow0}\frac{I_{n}(a)-I_{n}(a-\epsilon)}{\epsilon}\le\frac{I_{n}(a)}{a}=\frac{P(0)}{V(0)}.
\]
Hence, $H_{\nu_{n}}(A)\le\frac{P(0)}{V(0)},$ which, combined with
\eqref{eq:-61}, implies $g'(r)\ge0.$

Observe that $g(0)=0$. So, for all $r\ge0$, it holds that $g(r)\ge0,$
i.e., $\frac{P(r)}{V(r)}\le\frac{P(0)}{V(0)}.$ Noting that $P(r)=V'(r)$,
we then have that $\ln\frac{V(r)}{V(0)}\le\frac{P(0)}{V(0)}r.$ From
Theorem \ref{thm:DensityConcave-2}, it is known that $V(r)\ge\nu_{n}(A^{r})$.
We hence conclude that 
\[
\frac{\nu^{+}(A)}{\nu(A)}=\frac{P(0)}{V(0)}\ge\frac{1}{r}\ln\frac{V(r)}{V(0)}\ge\frac{1}{r}\ln\frac{\nu_{n}(A^{r})}{\nu_{n}(A)}.
\]
\end{proof}

\section{Putting Everything Together: From Log-Sobolev to Standard Isoperimetry }\label{sec:Combining-All-Above}

In this section, we combine all the tools built in Sections \ref{sec:From-Log-Sobolev-to}--\ref{sec:From-Variant-Isoperimetry}
to prove the lower bound in our large deviations theorem, i.e., to
deduce standard isoperimery from nonlinear log-Sobolev inequalities.
As a consequence of Theorem \ref{thm:LD2} and Theorem \ref{thm:isop-isop},
we can lower bound the asymptotics of the isoperimetric profile $I_{n}$
in terms of $\breve{\Theta}$. Recall that $\breve{\Theta}$ defined
in \eqref{eq:Theta_LCE} is the function characterizing the optimal
tradeoff in the nonlinear log-Sobolev inequality. We obtain the lower
bound part of Theorem \ref{thm:equivalence}. Recall $\underline{\Lambda}$
defined in \eqref{eq:-82}. 
\begin{thm}[From Nonlinear Log-Sobolev to Standard Isoperimetry]
\label{thm:asymisoper} Under Conditions \ref{cond:convexity} and
\ref{cond:integrability}, for any $\alpha>0$, 
\begin{align}
\underline{\Lambda}(\alpha) & \ge\sqrt{\breve{\Theta}(\alpha)}.\label{eq:LB}
\end{align}
\end{thm}

\begin{proof}
Let $A_{n}$ be an isoperimetric minimizer of size $e^{-n\alpha}$
in the $n$-dimensional space. From Theorem \ref{thm:isop-isop} with
$r=\sqrt{n\tau}$, we see that 
\[
\frac{I_{n}(e^{-n\alpha})}{e^{-n\alpha}\sqrt{n}}\ge\frac{\alpha-E_{n}(\alpha,\tau)}{\sqrt{\tau}}.
\]
Letting $n\to\infty$, from Theorem \ref{thm:LD2} we obtain for any
$\tau>0$, $\underline{\Lambda}(\alpha)\ge\frac{\alpha-\bar{\psi}(\alpha,\tau)}{\sqrt{\tau}}.$
Letting $\tau\to0$, we obtain $\underline{\Lambda}(\alpha)\ge\limsup_{\tau\downarrow0}\frac{\alpha-\Upsilon_{\alpha}^{-1}(\sqrt{\tau})}{\sqrt{\tau}}$.

Denote $t=\sqrt{\tau}$ and $s(\alpha,t)=\Upsilon_{\alpha}^{-1}(t)$.
Then, 
\[
t=\int_{s(\alpha,t)}^{\alpha}\frac{1}{\sqrt{\breve{\Theta}(r)}}\d r.
\]
Note that $s(\alpha,0)=\alpha$. Taking derivative w.r.t. $t$ at
$t=0$ yields $-\partial_{t}s(\alpha,t)\big|_{t=0}=\sqrt{\breve{\Theta}(\alpha)}$.
Therefore, $\underline{\Lambda}(\alpha)\ge\sqrt{\breve{\Theta}(\alpha)}.$ 
\end{proof}

\section{The Other Direction: From Standard Isoperimetry to Log-Sobolev}\label{sec:From-Isoperimetry-to}

We now prove the other direction (i.e., the upper bound) in Theorem
\ref{thm:equivalence}. Recall $\bar{\Lambda}$ defined around \eqref{eq:-82}. 
\begin{thm}[From Isoperimetry to Nonlinear Log-Sobolev]
\label{thm:asymisoper-1} Under Condition \ref{cond:convexity},
for any $\alpha>0$, 
\begin{align}
\bar{\Lambda}(\alpha) & \le\sqrt{\breve{\Theta}(\alpha)}.\label{eq:LB-1}
\end{align}
\end{thm}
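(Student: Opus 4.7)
The plan is to establish the two inequalities separately, invoking Theorem \ref{thm:DensityConcave} and the LDP of Theorem \ref{thm:LD2} for the first, and the Wasserstein-geometric machinery from Section \ref{sec:From-Log-Sobolev-to} for the second.

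For the first inequality $\bar{\Lambda}(\alpha)\le\lim_{t\downarrow\alpha}\xi(t)$, I would first note that $\bar{\Lambda}$ is non-decreasing since $a\mapsto I_n(a)/a$ is non-increasing under \eqref{eq:RicHess}, reducing the task to showing $\bar{\Lambda}(t)\le\xi(t)$ for each fixed $t>\alpha$ and then sending $t\downarrow\alpha$. Fix such a $t$, let $A_n$ be an isoperimetric minimizer of measure $e^{-nt}$ from Theorem \ref{thm:DensityConcave}, and invoke the foliation property (enlargements of minimizers remain minimizers) to deduce that $V(r):=\nu_n(A_n^r)$ satisfies the ODE $V'(r)=I_n(V(r))$ with $V(0)=e^{-nt}$. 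The change of variables $u=-n^{-1}\ln v$ and the normalization $\Lambda_n(u):=I_n(e^{-nu})/(e^{-nu}\sqrt{n})$ turn this into the exact identity $\int_{u_1}^{t}\Lambda_n(u)^{-1}\,\d u=\sqrt{\tau}$ with $u_1:=-n^{-1}\ln V(\sqrt{n\tau})$. Theorem \ref{thm:LD2} gives $V(\sqrt{n\tau})\ge e^{-n\psi(t,\tau)+o(n)}$, hence $u_1\le\psi(t,\tau)+o(1)$; by itself this only recovers the lower bound $\bar{\Lambda}(t)\ge\xi(t)$ of Theorem \ref{thm:asymisoper}. For the upper bound, I would exhibit explicit near-isoperimetric competitors $B_n$ (for instance the product typical sets that asymptotically attain the sup in $\psi$) with $\nu_n(B_n)=e^{-nt}$ and $\nu_n(B_n^{\sqrt{n\tau}})\le e^{-n\psi(t,\tau)+o(n)}$, and transfer this upper bound to the ODE foliation via the perimeter minimality of $A_n$ at every level; this yields the complementary estimate $u_1\ge\psi(t,\tau)-o(1)$, and hence $\bar{\Lambda}(t)\le\xi(t)$.

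For the second inequality $\lim_{t\downarrow\alpha}\xi(t)\le\sqrt{\breve{\Theta}(\alpha)}$, it suffices to prove $\xi(t)\le\sqrt{\breve{\Theta}(t)}$ for each $t$ and then invoke continuity of the convex function $\breve{\Theta}$. Equivalently, I must lower bound $\psi(t,\tau)\ge t-\sqrt{\tau\breve{\Theta}(t)}+o(\sqrt{\tau})$ as $\tau\downarrow 0$. Plug into the sup--inf defining $\psi$ the $\pi_{XW}$ realizing the infimum in \eqref{eq:Theta_LCE}: a weight $\lambda$ and $\mu_1,\mu_2\in P_2^{\mathrm{ac}}(\M)$ with $\lambda D(\mu_1\|\nu)+(1-\lambda)D(\mu_2\|\nu)=t$ and $\lambda I(\mu_1\|\nu)+(1-\lambda)I(\mu_2\|\nu)=\breve{\Theta}(t)$. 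To lower bound the inner infimum, I would apply the Wasserstein-local gradient estimate $D(\pi\|\nu)\ge D(\mu_i\|\nu)-h\sqrt{I(\mu_i\|\nu)}+o(h)$ for any $\pi$ with $\W_2(\mu_i,\pi)\le h$, which holds under CD$(0,\infty)$ because the Wasserstein sub-differential of relative entropy at $\mu_i$ has norm $\sqrt{I(\mu_i\|\nu)}$ (the local, reverse form of Theorem \ref{thm:Sobolev-Talagrand-1}, obtained by running the heat flow into $\mu_i$). Summing the two conditional estimates with weights $\lambda,1-\lambda$ and minimizing $\lambda h_1\sqrt{I(\mu_1\|\nu)}+(1-\lambda)h_2\sqrt{I(\mu_2\|\nu)}$ subject to $\lambda h_1^2+(1-\lambda)h_2^2\le\tau$ via Cauchy--Schwarz produces exactly $\sqrt{\tau\breve{\Theta}(t)}$ as the maximal entropy loss, which gives the claim.

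The main obstacle is the Wasserstein-local upper-gradient estimate $|D(\pi\|\nu)-D(\mu_i\|\nu)|\le\W_2(\mu_i,\pi)\sqrt{I(\mu_i\|\nu)}+o(\W_2)$ used in the second paragraph: while natural in Otto's calculus, its rigorous justification in the Riemannian/weighted-measure setting requires approximating $\pi$ by the heat flow out of $\mu_i$ (using Theorem \ref{thm:Sobolev-Talagrand-1}) and then controlling the discrepancy via the geodesic convexity of $D$ guaranteed by \eqref{eq:RicHess}. A secondary obstacle for the first inequality is exhibiting competitors $B_n$ whose $\sqrt{n\tau}$-enlargements match the LDP rate from above; typical sets attain the LDP sup asymptotically, but upgrading this into a pointwise upper bound that can be threaded through the isoperimetric foliation (so as to yield $u_1\ge\psi(t,\tau)-o(1)$ rather than only $\le$) is the delicate step that distinguishes this theorem from the easier lower-bound direction of Theorem \ref{thm:asymisoper}.
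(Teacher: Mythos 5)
Your argument for the second inequality is essentially sound and is in fact the paper's own route: you fix a near-optimal $(\lambda,\mu_{1},\mu_{2})$ for \eqref{eq:Theta_LCE}, bound the entropy drop inside a $\W_{2}$-ball around each $\mu_{i}$ by $\W_{2}\sqrt{I(\mu_{i}\|\nu)}$, and combine the two branches by Cauchy--Schwarz to get $\psi(t,\tau)\ge t-\sqrt{\tau\breve{\Theta}(t)}$. The ``Wasserstein-local gradient estimate'' that you single out as the main obstacle is nothing other than the HWI inequality $D(\mu_{0}\|\nu)-D(\mu_{1}\|\nu)\le\W_{2}(\mu_{0},\mu_{1})\sqrt{I(\mu_{0}\|\nu)}$, valid under \eqref{eq:RicHess} (the paper cites Corollary 20.13 of Villani; no $o(h)$ correction is needed), so this step does not have to be re-derived from heat flow; the paper phrases the same computation through the descending slope $|\partial^{-}D|\le\sqrt{I}$, but the content is identical to yours.

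The first inequality, as you propose it, has a genuine gap. You lean on a ``foliation property'' --- that enlargements $A_{n}^{r}$ of an isoperimetric minimizer remain minimizers, so that $V(r)=\nu_{n}(A_{n}^{r})$ solves the exact ODE $V'(r)=I_{n}(V(r))$ --- and then on an ODE comparison to transfer the competitor bound $\nu_{n}(B_{n}^{\sqrt{n\tau}})\le e^{-n\psi(t,\tau)+o(n)}$ to the minimizer's foliation. Neither is available: Theorems \ref{thm:DensityConcave} and \ref{thm:DensityConcave-2} only yield the one-sided estimate of Theorem \ref{thm:DensityConcave-1} (an upper bound on the growth of $\nu_{n}(A^{r})$ for a minimizer), and enlargements of minimizers are in general not minimizers, so the exact ODE on which both your identity $\int_{u_{1}}^{t}\Lambda_{n}(u)^{-1}\d u=\sqrt{\tau}$ and your transfer step rest is unjustified (and false in general). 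The fix is to discard the minimizer $A_{n}$ altogether and run the slope argument directly on your competitor $B_{n}$: for any Borel set, $r\mapsto\ln\nu_{n}(B_{n}^{r})$ is monotone, so by the mean value theorem there is $r_{0}\in(0,\sqrt{n\tau})$ with $\frac{1}{\sqrt{n\tau}}\ln\frac{\nu_{n}(B_{n}^{\sqrt{n\tau}})}{\nu_{n}(B_{n})}\ge\frac{\nu_{n}^{+}(B_{n}^{r_{0}})}{\nu_{n}(B_{n}^{r_{0}})}\ge\frac{I_{n}(\nu_{n}(B_{n}^{r_{0}}))}{\nu_{n}(B_{n}^{r_{0}})}\ge\frac{I_{n}(\nu_{n}(B_{n}^{\sqrt{n\tau}}))}{\nu_{n}(B_{n}^{\sqrt{n\tau}})}$, using only the definition of $I_{n}$ and the monotonicity of $a\mapsto I_{n}(a)/a$; this bounds $\Lambda_{n}$ at a measure level whose exponent is at least $\psi(t,\tau)-o(1)$ and yields $\bar{\Lambda}(t-\delta)\le\frac{t-\psi(t,\tau)}{\sqrt{\tau}}$, exactly the paper's first proof. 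One further caution: since Theorem \ref{thm:asymisoper-1} assumes neither compact support nor $L<\infty$, you must obtain the competitors $B_{n}$ from Statement 2 of Theorem \ref{thm:LD} (the achievability/lower bound on the exponent, which needs no such hypotheses), not from Theorem \ref{thm:LD2} as cited in your sketch; the converse bound ``$V(\sqrt{n\tau})\ge e^{-n\psi+o(n)}$'' you quote is the direction that does need those extra assumptions and is not required here.
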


We provide two distinct proofs for this theorem. One is to use the
tools of the variant isoperimetry and the famous HWI inequality. The
other one is to apply the coarea formula and the large deviations
theory. 
\begin{proof}[First Proof of Theorem \eqref{thm:asymisoper-1}]
We first prove the first inequality in \eqref{eq:LB-1}. Let $A=A_{n}$
be a set approximately attaining $\Gamma_{n}(e^{-n\alpha},n\tau)$
(given in \eqref{eq:Gamma-2}) within a factor $1-\epsilon$ where
$\epsilon>0$, i.e., $A$ has probability $e^{-n\alpha}$ and $\nu_{n}(A^{\sqrt{n\tau}})\ge(1-\epsilon)\Gamma_{n}(e^{-n\alpha},n\tau)$.
Let $r:=\sqrt{n\tau}$. Since $f(t):=\ln\nu_{n}(A^{t})$ is monotone,
it is differentiable almost everywhere on $(0,r)$. As a basic property
of monotone functions, $f'$ is integrable over $[0,r]$ and $\int_{0}^{r}f'(t)\d t\le f(r)-f(0)$
(see e.g., \cite[Corollary 4 on p.113]{royden2010real}), which implies
that there is some $r_{0}\in(0,r)$ such that $\frac{f(r)-f(0)}{r}\ge f'(r_{0}),$i.e.,
\[
\frac{1}{r}\ln[\nu_{n}(A^{r})/\nu_{n}(A)]\ge\frac{\nu_{n}^{+}(A^{r_{0}})}{\nu_{n}(A^{r_{0}})}.
\]
Since $\nu_{n}^{+}(A^{r_{0}})\ge I_{n}(\nu_{n}(A^{r_{0}}))$ and $I_{n}(a)/a$
is non-increasing in $a$, it holds that 
\[
\frac{1}{r}\ln[\nu_{n}(A^{r})/\nu_{n}(A)]\ge\frac{I_{n}(\nu_{n}(A^{r_{0}}))}{\nu_{n}(A^{r_{0}})}\ge\frac{I_{n}(\nu_{n}(A^{r}))}{\nu_{n}(A^{r})}.
\]

Noting that $r=\sqrt{n\tau}$ and letting $n\to\infty$, by Statement
2 in Theorem \ref{thm:LD}, we obtain for any $\tau>0$, 
\begin{align}
\frac{\alpha-\psi(\alpha,\tau)}{\sqrt{\tau}} & \ge\lim_{n\to\infty}\frac{\ln[\nu_{n}(A_{n}^{\sqrt{n\tau}})/\nu_{n}(A_{n})]}{n\sqrt{\tau}}\ge\limsup_{n\to\infty}\frac{I_{n}(a_{n,\tau})}{a_{n,\tau}\sqrt{n}},\label{eq:-42}
\end{align}
where $a_{n,\tau}=\nu_{n}(A_{n}^{\sqrt{n\tau}})$ and $\psi$ is defined
in \eqref{eq:-41}.

Since $A$ approximately attains $\Gamma_{n}(e^{-n\alpha},n\tau)$
within a factor $1-\epsilon$, by Statement 2 in Theorem \ref{thm:LD}
again, it holds that $\liminf_{n\to\infty}-\frac{1}{n}\ln a_{n,\tau}\ge\psi(\alpha,\tau).$
Hence, \eqref{eq:-42} implies that for any $\delta>0$, $\bar{\Lambda}(\psi(\alpha,\tau)-\delta)\le\frac{\alpha-\psi(\alpha,\tau)}{\sqrt{\tau}}.$
Taking $\tau\downarrow0$ yields 
\begin{align}
\limsup_{\tau\downarrow0}\bar{\Lambda}(\psi(\alpha,\tau)-\delta)\le\xi(\alpha) & :=\limsup_{\tau\downarrow0}\frac{\alpha-\psi(\alpha,\tau)}{\sqrt{\tau}}.\label{eq:xi-2}
\end{align}

We can assume that $\liminf_{\tau\downarrow0}\psi(\alpha,\tau)=\alpha$,
since otherwise, $\xi(\alpha)=\infty$ and nothing is needed to prove.
Hence, given any $\delta>0$, we assume that $\psi(\alpha,\tau)>\alpha-\delta$
for all sufficiently small $\tau$, which, combined with the inequality
above, implies 
\begin{equation}
\bar{\Lambda}(\alpha-2\delta)\le\xi(\alpha).\label{eq:-43}
\end{equation}

We next prove $\xi(\alpha)\le\sqrt{\breve{\Theta}(\alpha)}$. Substituting
the expression of $\psi(\alpha,\tau)$ into the bound above yields
that 
\begin{align*}
\xi(\alpha) & =\limsup_{\tau\downarrow0}\frac{\alpha-\psi(\alpha,\tau)}{\sqrt{\tau}}\\
 & \le\lim_{\tau\downarrow0}\inf_{\pi_{XW}:D(\pi_{X|W}\|\nu|\pi_{W})\le\alpha}\\
 & \qquad\sup_{\pi_{Y|W}:\W(\pi_{X|W},\pi_{Y|W}|\pi_{W})\le\sqrt{\tau}}\frac{\alpha-D(\pi_{Y|W}\|\nu|\pi_{W})}{\W(\pi_{X|W},\pi_{Y|W}|\pi_{W})}\\
 & \le\inf_{\pi_{XW}:D(\pi_{X|W}\|\nu|\pi_{W})=\alpha}\lim_{\tau\downarrow0}\\
 & \qquad\sup_{\pi_{Y|W}:\W(\pi_{X|W},\pi_{Y|W}|\pi_{W})\le\sqrt{\tau}}\frac{D(\pi_{X|W}\|\nu|\pi_{W})-D(\pi_{Y|W}\|\nu|\pi_{W})}{\W(\pi_{X|W},\pi_{Y|W}|\pi_{W})},
\end{align*}
where in the last line, we swap the infimization and the limit, and
we restrict the inequality constraint in the infimization to be an
equality.

Denote $\alpha_{i}=D(\pi_{X|W=i}\|\nu),\beta_{i}=D(\pi_{Y|W=i}\|\nu)$,
and $\gamma_{i}=\W(\pi_{X|W=i},\pi_{Y|W=i})$, $i\in\{0,1\}$. Then,
by the Cauchy--Schwarz inequality, 
\begin{align*}
 & D(\pi_{X|W}\|\nu|\pi_{W})-D(\pi_{Y|W}\|\nu|\pi_{W})=\mathbb{E}_{\pi}[\alpha_{W}-\beta_{W}]\\
 & \qquad\le\sqrt{\mathbb{E}_{\pi}[\left(\frac{\alpha_{W}-\beta_{W}}{\gamma_{W}}\right)^{2}]\mathbb{E}_{\pi}[\gamma_{W}^{2}]}.
\end{align*}
So, 
\begin{align}
\xi(\alpha) & \le\inf_{\pi_{XW}:D(\pi_{X|W}\|\nu|\pi_{W})=\alpha}\lim_{\tau\downarrow0}\sup_{\pi_{Y|W}:\mathbb{E}_{\pi}[\gamma_{W}^{2}]\le\tau}\sqrt{\mathbb{E}_{\pi}[\left(\frac{\alpha_{W}-\beta_{W}}{\gamma_{W}}\right)^{2}]}.\label{eq:-70}
\end{align}
Since $\pi_{Y|W=i}=\pi_{X|W=i}$ is always a feasible solution to
the supremum above, we have $\beta_{i}\le\alpha_{i},i\in\{0,1\}$.

Define the descending slope of the relative entropy w.r.t. the Wasserstein
metric as 
\begin{align*}
|\partial^{-}D|(\mu\|\nu) & :=\limsup_{\pi\to\mu}\frac{[D(\mu\|\nu)-D(\pi\|\nu)]^{+}}{\W(\pi,\mu)}\\
 & =\lim_{\tau\downarrow0}\sup_{\pi:0<\W(\pi,\mu)\le\tau}\frac{[D(\mu\|\nu)-D(\pi\|\nu)]^{+}}{\W(\pi,\mu)}.
\end{align*}
Using this notation, we rewrite \eqref{eq:-70} as 
\begin{align*}
\xi(\alpha) & \le\inf_{\pi_{XW}:D(\pi_{X|W}\|\nu|\pi_{W})=\alpha}\sqrt{\mathbb{E}_{W\sim\pi_{W}}[\left(|\partial^{-}D|(\pi_{X|W}\|\nu)\right)^{2}]}.
\end{align*}

We now need the HWI inequality for the Riemannian manifold setting
\cite[Corollary 20.13]{villani2008optimal} \cite{otto2000generalization}. 
\begin{thm}[{{{{HWI Inequality \cite[Corollary 20.13]{villani2008optimal}}}}}]
Let $\M$ be a Riemannian manifold equipped with a reference probability
measure $\nu=e^{-V}\mathrm{vol}\in\P_{2}^{\mathrm{ac}}(\M),V\in C^{2}(\M)$.
Assume that Condition \ref{cond:convexity} holds. Then, for any two
probability measures $\mu_{0}=\rho_{0}\nu,\mu_{1}=\rho_{1}\nu$ in
$\P_{2}^{\mathrm{ac}}(\M)$ such that $D(\mu_{1}\|\nu)<\infty$ and
$\rho_{0}$ is Lipschitz, it holds that 
\[
D(\mu_{0}\|\nu)-D(\mu_{1}\|\nu)\le\W(\mu_{0},\mu_{1})\sqrt{I(\mu_{0}\|\nu)}.
\]
\end{thm}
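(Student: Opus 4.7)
The plan is to prove the HWI inequality by combining three ingredients: existence of the displacement interpolation (Wasserstein geodesic) between $\mu_0$ and $\mu_1$, displacement convexity of relative entropy under $\mathrm{CD}(0,\infty)$, and a computation of the initial slope of entropy along the geodesic, closed off by Cauchy--Schwarz.

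\textbf{Step 1: Set up the displacement interpolation.} By McCann's theorem on Riemannian manifolds, there is a $c$-concave function $\varphi$ (with $c=d^{2}/2$) such that the map $T_{t}(x):=\exp_{x}(-t\nabla\varphi(x))$ pushes $\mu_{0}$ to a probability measure $\mu_{t}$, and the curve $(\mu_{t})_{t\in[0,1]}$ is the (unique) Wasserstein geodesic from $\mu_{0}$ to $\mu_{1}$. Its velocity field $v_{t}$ satisfies the continuity equation $\partial_{t}\mu_{t}+\nabla\!\cdot\!(\mu_{t}v_{t})=0$, with initial velocity $v_{0}=-\nabla\varphi$, and
\[
\int|v_{0}|^{2}\,\d\mu_{0}\;=\;\W_{2}^{2}(\mu_{0},\mu_{1}).
\]

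\textbf{Step 2: Displacement convexity.} Under the hypothesis \eqref{eq:RicHess}, the Sturm--von Renesse theorem (equivalently, McCann's displacement convexity for $\mathrm{CD}(0,\infty)$) gives that $t\mapsto D(\mu_{t}\|\nu)$ is convex on $[0,1]$. Writing $f(t):=D(\mu_{t}\|\nu)$, convexity yields $f(1)\geq f(0)+f'(0^{+})$, i.e.
\[
D(\mu_{0}\|\nu)-D(\mu_{1}\|\nu)\;\leq\;-f'(0^{+}).
\]

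\textbf{Step 3: Compute the initial slope.} Writing $\mu_{t}=\rho_{t}\nu$, direct differentiation using the continuity equation and integration by parts (legitimized by the Lipschitz regularity of $\rho_{0}$, which makes $\nabla\ln\rho_{0}\in L^{2}(\mu_{0})$ well defined on $\{\rho_{0}>0\}$) gives
\[
\frac{\d}{\d t}D(\mu_{t}\|\nu)\;=\;\int\nabla\ln\rho_{t}\cdot v_{t}\,\d\mu_{t}.
\]
Evaluating at $t=0$,
\[
f'(0^{+})\;=\;-\int\nabla\ln\rho_{0}\cdot\nabla\varphi\,\d\mu_{0}.
\]

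\textbf{Step 4: Cauchy--Schwarz and conclusion.} Applying Cauchy--Schwarz in $L^{2}(\mu_{0})$,
\[
-f'(0^{+})\;=\;\int\nabla\ln\rho_{0}\cdot\nabla\varphi\,\d\mu_{0}\;\leq\;\sqrt{\int|\nabla\ln\rho_{0}|^{2}\,\d\mu_{0}}\;\sqrt{\int|\nabla\varphi|^{2}\,\d\mu_{0}}\;=\;\sqrt{I(\mu_{0}\|\nu)}\cdot\W_{2}(\mu_{0},\mu_{1}),
\]
where in the last equality we used $\int|\nabla\ln\rho_{0}|^{2}\rho_{0}\,\d\nu=I(\mu_{0}\|\nu)$ and $\int|\nabla\varphi|^{2}\d\mu_{0}=\W_{2}^{2}(\mu_{0},\mu_{1})$. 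Combining with Step 2 produces the claimed inequality.

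\textbf{Main obstacle.} The delicate step is Step 3: rigorously justifying that the right derivative of $t\mapsto D(\mu_{t}\|\nu)$ at $t=0^{+}$ exists and equals $-\int\nabla\ln\rho_{0}\cdot\nabla\varphi\,\d\mu_{0}$. The issue is that one must integrate by parts against a possibly singular displacement map and differentiate under the integral while $\rho_{t}$ is only defined through pushforward; standard remedies are to first assume $\mu_{1}$ compactly supported with smooth positive density, use regularity of Monge--Amp\`ere/Jacobian along the geodesic to justify the calculation, and recover the general case by the usual approximation (truncation plus convolution) consistent with $D(\mu_{1}\|\nu)<\infty$ and the Lipschitz assumption on $\rho_{0}$. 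Once this derivative is in place, Steps 1, 2, 4 are essentially mechanical.
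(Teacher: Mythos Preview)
The paper does not give its own proof of this statement: the HWI inequality is quoted verbatim as \cite[Corollary 20.13]{villani2008optimal} and used as a black box to bound the descending slope $|\partial^{-}D|(\mu\|\nu)$ by $\sqrt{I(\mu\|\nu)}$. There is nothing in the paper to compare your argument against.

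That said, your sketch is the standard Otto--Villani displacement-convexity proof, which is precisely the argument behind the cited corollary in Villani's book: build the Wasserstein geodesic via the $c$-concave potential, invoke convexity of $t\mapsto D(\mu_t\|\nu)$ under $\mathrm{CD}(0,\infty)$ to reduce to the initial slope, identify that slope as $-\int\langle\nabla\ln\rho_0,\nabla\varphi\rangle\,\d\mu_0$, and finish with Cauchy--Schwarz. Your identification of Step~3 as the genuine technical obstacle is accurate; in Villani's treatment this is handled not by differentiating the continuity equation directly but via the ``above-tangent'' formulation of displacement convexity (Theorem~23.14 there), which bypasses the need to justify the derivative and instead produces the inequality $D(\mu_1\|\nu)\ge D(\mu_0\|\nu)+\int\langle\nabla\ln\rho_0,\nabla\varphi\rangle\,\d\mu_0$ in one stroke from the Jacobian estimate along the geodesic. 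Either route works, and yours is correct in outline.
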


This theorem implies $|\partial^{-}D|(\mu\|\nu)\le\sqrt{I(\mu\|\nu)}.$
In fact, this is an equality even for Polish metric probability measure
space satisfying $\mathrm{\mathrm{C}D}(K,\infty)$; see \cite[Theorem 9.3]{ambrosio2014calculus}.
We restrict $\pi_{X|W=i},i\in\{0,1\}$ to be in $\P_{2}^{\mathrm{ac}}(\M)$.
Therefore, 
\begin{align*}
\xi(\alpha) & \le\inf_{\substack{\pi_{XW}:\pi_{X|W=i}\in\P_{2}^{\mathrm{ac}}(\M),\forall i,\\
D(\pi_{X|W}\|\nu|\pi_{W})=\alpha
}
}\sqrt{I(\pi_{X|W}\|\nu|\pi_{W})}=\sqrt{\breve{\Theta}(\alpha)}.
\end{align*}
Combining this with \eqref{eq:-43} yields that $\bar{\Lambda}(\alpha-2\delta)\le\xi(\alpha)\le\sqrt{\breve{\Theta}(\alpha)}.$
That is, for $\alpha,\delta>0$, $\bar{\Lambda}(\alpha)\le\xi(\alpha+\delta)\le\sqrt{\breve{\Theta}(\alpha+\delta)}.$
Since $\breve{\Theta}(t)$ is continuous in $t>0$, letting $\delta\downarrow0$
yields $\bar{\Lambda}(\alpha)\le\sqrt{\breve{\Theta}(\alpha)}.$ 
\end{proof}
\begin{proof}[Second Proof of Theorem \eqref{thm:asymisoper-1}]
By an approximation argument, we may assume that the probability
measures $\mu$ in the nonlinear log-Sobolev inequality have smooth
density $f:=\d\mu/\d\nu$. Obviously, $\int f\,d\nu=1$. Let $f^{\otimes n}$
be the $n$-fold product of $f$ with itself, which satisfies $\d\mu_{n}/\d\nu_{n}=f^{\otimes n}$
with $\mu_{n}=\mu^{\otimes n}$. Then, the coarea formula (see e.g.,
\cite[Exercise III.12]{chavel2006riemannian}) states: 
\begin{equation}
\int|\nabla f^{\otimes n}|\,d\nu_{n}=\int_{0}^{\infty}\left(\int_{C_{s}}\varphi_{n}(\mathbf{x})\,\d\mathrm{\area}(\mathbf{x})\right)\d s\label{eq:-19}
\end{equation}
where $\varphi_{n}(\mathbf{x})=e^{-\sum_{i=1}^{n}V(x_{i})}$ be the
density function of $\nu_{n}$ w.r.t. the volume form on the manifold
$\M^{n}$, $C_{s}=\{\mathbf{x}\in\M^{n}:f^{\otimes n}(\mathbf{x})=s\}$
be the level set of the function $f^{\otimes n}$, $\d\mathrm{\area}$
be the induced volume form (Hausdorff measure of dimension equal to
the dimension of $\M^{n}$ minus $1$) on the level set $C_{s}$,
and $|\nabla f^{\otimes n}(\mathbf{x})|$ be the norm of the gradient
of $f^{\otimes n}$ at the point $\mathbf{x}\in\M^{n}$. For brevity,
we denote $D:=D(\mu\|\nu)$.

Observe that 
\begin{align}
 & \int_{0}^{\infty}\left(\int_{C_{s}}\varphi_{n}(\mathbf{x})\,\d\mathrm{\area}(\mathbf{x})\right)\d s\ge\int_{e^{n(D-\delta)}}^{\infty}\left(\int_{C_{s}}\varphi_{n}(\mathbf{x})\,\d\mathrm{\area}(\mathbf{x})\right)\d s\nonumber \\
 & \qquad=\int_{e^{n(D-\delta)}}^{\infty}\nu_{n}^{+}\{f^{\otimes n}\geq s\}\d s\ge\eta_{n}\int_{e^{n(D-\delta)}}^{\infty}\nu_{n}\{f^{\otimes n}\geq s\}\d s,\label{eq:-29}
\end{align}
where 
\[
\eta_{n}:=\inf_{\substack{s\ge e^{n(D-\delta)}:\\
\nu_{n}\{f^{\otimes n}\geq s\}>0
}
}\frac{\nu_{n}^{+}\{f^{\otimes n}\geq s\}}{\nu_{n}\{f^{\otimes n}\geq s\}}.
\]

Note that 
\begin{align}
 & \int_{e^{n(D-\delta)}}^{\infty}\nu_{n}\{f^{\otimes n}\geq s\}\d s=\int_{0}^{\infty}\int1\{f^{\otimes n}\geq s,\,s\ge e^{n(D-\delta)}\}\d\nu_{n}\d s\nonumber \\
 & \qquad=\int(f^{\otimes n}-e^{n(D-\delta)})\cdot1\{f^{\otimes n}\ge e^{n(D-\delta)}\}\d\nu_{n}=\gamma_{1,n}-\gamma_{2,n},\label{eq:-13}
\end{align}
where 
\begin{align*}
\gamma_{1,n} & :=\int f^{\otimes n}\cdot1\{f^{\otimes n}\ge e^{n(D-\delta)}\}\d\nu_{n}=\mu_{n}\{f^{\otimes n}\ge e^{n(D-\delta)}\},
\end{align*}
and 
\[
\gamma_{2,n}:=e^{n(D-\delta)}\nu_{n}\{f^{\otimes n}\ge e^{n(D-\delta)}\}.
\]

By the law of large numbers, $\gamma_{1,n}\to1$ as $n\to\infty$.
By the large deviations theory (or the asymptotic equipartition property
in information theory), 
\[
\limsup_{n\to\infty}-\frac{1}{n}\ln\gamma_{2,n}=\sup_{t>0}t\left(D_{1-t}(\mu\|\nu)-(D(\mu\|\nu)-\delta)\right)>0,
\]
where $D_{1-t}(\mu\|\nu)=-\frac{1}{t}\ln\int f^{1-t}d\nu,\;f=\d\mu/\d\nu$
is the $(1-t)$-Rényi divergence of $\mu$ w.r.t. $\nu$. Therefore,
substituting these into \eqref{eq:-13} yields $\int_{e^{n(D-\delta)}}^{\infty}\nu_{n}\{f^{\otimes n}\geq s\}\d s\to1,\textrm{ as }n\to\infty,$
which, combined with \eqref{eq:-29}, implies 
\begin{align*}
\int_{0}^{\infty}\left(\int_{C_{s}}\varphi_{n}(\mathbf{x})\,\d\mathrm{\area}(\mathbf{x})\right)\d s & \ge(1+o(1))\eta_{n}.
\end{align*}

We now lower bound $\eta_{n}$. Observe that 
\begin{align*}
\eta_{n} & \ge\inf_{\substack{s\ge e^{n(D-\delta)}:\\
\nu_{n}\{f^{\otimes n}\geq s\}>0
}
}\frac{I_{n}(\nu_{n}\{f^{\otimes n}\geq s\})}{\nu_{n}\{f^{\otimes n}\geq s\}}=\frac{I_{n}(a_{n})}{a_{n}},
\end{align*}
where $a_{n}:=\nu_{n}\{f^{\otimes n}\geq e^{n(D-\delta)}\}$, and
the equality follows since $I_{n}(a)/a$ is non-increasing in $a$.
Hence, 
\begin{align*}
\int_{0}^{\infty}\left(\int_{C_{s}}\varphi_{n}(\mathbf{x})\,\d\mathrm{\area}(\mathbf{x})\right)\d s & \ge(1+o(1))\frac{I_{n}(a_{n})}{a_{n}}.
\end{align*}

One can see that 
\begin{align*}
a_{n} & =\int_{\frac{\d\mu_{n}}{\d\nu_{n}}\ge e^{n(D-\delta)}}\d\nu_{n}\le\int_{\frac{\d\mu_{n}}{\d\nu_{n}}\ge e^{n(D-\delta)}}e^{-n(D-\delta)}\d\mu_{n}\le e^{-n(D-\delta)}=:b_{n}.
\end{align*}
By again the fact that $I_{n}(a)/a$ is non-increasing in $a$, we
have 
\[
\limsup_{n\to\infty}\frac{I_{n}(a_{n})}{a_{n}\sqrt{n}}\ge\limsup_{n\to\infty}\frac{I_{n}(b_{n})}{b_{n}\sqrt{n}}=\bar{\Lambda}(D-\delta).
\]

Therefore, for any $\delta>0$, 
\begin{align}
\limsup_{n\to\infty}\frac{1}{\sqrt{n}}\int_{0}^{\infty}\left(\int_{C_{s}}\varphi_{n}(\mathbf{x})\,\d\mathrm{\area}(\mathbf{x})\right)\d s & \ge\bar{\Lambda}(D(\mu\|\nu)-\delta).\label{eq:-21}
\end{align}

On the other hand, by the Cauchy--Schwarz inequality, 
\begin{equation}
\int|\nabla f^{\otimes n}|\,\d\nu_{n}\le\sqrt{\int f^{\otimes n}\,\d\nu_{n}\,\cdot\int\frac{|\nabla f^{\otimes n}|^{2}}{f^{\otimes n}}\,\d\nu_{n}}=\sqrt{nI(\mu\|\nu)}.\label{eq:-20}
\end{equation}

Combining \eqref{eq:-19}, \eqref{eq:-21}, and \eqref{eq:-20} yields
that $\sqrt{I(\mu\|\nu)}\ge\bar{\Lambda}(D(\mu\|\nu)-\delta),\,\forall\mu.$
Noting that $\bar{\Lambda}$ is non-decreasing, we have $\sqrt{\Theta(\alpha)}\ge\bar{\Lambda}(\alpha-\delta).$
That is, 
\begin{equation}
\bar{\Lambda}(\alpha)\le\sqrt{\Theta(\alpha+\delta)}.\label{eq:-72}
\end{equation}

By substitution $\nu\leftarrow\nu^{\otimes k}$, we define 
\[
\bar{\Lambda}_{k}(\alpha):=\limsup_{n\to\infty}\frac{I_{kn}(e^{-kn\alpha})}{e^{-kn\alpha}\sqrt{n}}.
\]
Noting that both $n\mapsto I_{n}(a)$ and $a\mapsto I_{n}(a)/a$ are
non-increasing, it holds that for $kn\le n'<k(n+1)$, 
\[
\frac{I_{k(n+1)}(e^{-kn\alpha})}{e^{-kn\alpha}\sqrt{k(n+1)}}\le\frac{I_{n'}(e^{-n'\alpha})}{e^{-n'\alpha}\sqrt{n'}}\le\frac{I_{kn}(e^{-k(n+1)\alpha})}{e^{-k(n+1)\alpha}\sqrt{kn}}.
\]
Taking $n\to\infty$ yields that for any $\delta>0$, $\frac{\bar{\Lambda}_{k}(\alpha-\delta)}{\sqrt{k}}\le\bar{\Lambda}(\alpha)\le\frac{\bar{\Lambda}_{k}(\alpha+\delta)}{\sqrt{k}}.$
Applying \eqref{eq:-72} to $(\M^{k},\nu^{\otimes k})$ yields that
for all $k$, 
\begin{equation}
\bar{\Lambda}(\alpha)\le\frac{\bar{\Lambda}_{k}(\alpha+\delta)}{\sqrt{k}}\le\sqrt{\frac{\Theta_{k}(k(\alpha+2\delta))}{k}},\label{eq:-41-1}
\end{equation}
where $\Theta_{k}$ is defined for $(\M^{k},\nu^{\otimes k})$ and
given by 
\[
\Theta_{k}(\alpha):=\inf_{\mu\in\P_{2}^{\mathrm{ac}}(\M):D(\mu\|\nu^{\otimes k})\ge\alpha}I(\mu\|\nu^{\otimes k}).
\]

We now claim that 
\begin{equation}
\lim_{k\to\infty}\frac{\Theta_{k}(k\beta)}{k}=\breve{\Theta}(\beta).\label{eq:-62}
\end{equation}
This is because, on one hand, by the tensorization property of nonlinear
log-Sobolev inequality, for all $k$ and $\beta\ge0$, $\frac{1}{k}\Theta_{k}(k\beta)\ge\breve{\Theta}(\beta);$
on the other hand, by setting the function $f$ in the $k$-dimensional
nonlinear log-Sobolev inequality to be the product form $f(x)=\prod_{i=1}^{j}f_{1}(x_{i})\prod_{i=j+1}^{k}f_{2}(x_{i})$
for some functions $f_{1}$ and $f_{2}$ defined on $\M$, the lower
bound $\breve{\Theta}$ can be asymptotically approached.

Combining \eqref{eq:-41-1} and \eqref{eq:-62} yields $\bar{\Lambda}(\alpha)\le\sqrt{\breve{\Theta}(\alpha+2\delta)}$.
Letting $\delta\downarrow0$ yields the desired result $\bar{\Lambda}(\alpha)\le\sqrt{\breve{\Theta}(\alpha)}$. 
\end{proof}
Combining this theorem with Theorem \ref{thm:asymisoper} yields that
$\underline{\Lambda}(\alpha)=\bar{\Lambda}(\alpha)=\sqrt{\breve{\Theta}(\alpha)},$
completing the proof of our main result, Theorem \ref{thm:equivalence}.

\section*{Appendix A: Proof of Statement 2 of Theorem \ref{thm:equivalence}}\label{sec:Appendix-A:-Proof}

Our proof for $\Lambda(\alpha)\le\sqrt{\breve{\Theta}(\alpha)}$ still
works. We now show that if Condition \ref{cond:integrability} does
not hold, then $\breve{\Theta}(\alpha)=0$ for any $\alpha>0$. We
prove this by contradiction.

Suppose that $\breve{\Theta}(\alpha_{0})>0$ for some $\alpha_{0}>0$.
Then, $\breve{\Theta}(\alpha)\ge C\alpha$ for $\alpha\ge\alpha_{0}$
where $C=\frac{\breve{\Theta}(\alpha_{0})}{\alpha_{0}}>0$. Thus,
\begin{equation}
C(D(\mu\|\nu)-\alpha_{0})\leq I(\mu\|\nu),\;\forall\mu\in\P_{2}^{\mathrm{ac}}(\M).\label{eq:-74-1-1}
\end{equation}
Equivalently, for any smooth function $f\in L^{2}(\M,\nu)$: 
\[
\int f^{2}\ln f^{2}\d\nu-\left(\int f^{2}\d\nu\right)\ln\left(\int f^{2}\d\nu\right)-\alpha_{0}\int f^{2}\d\nu\le\frac{4}{C}\int|\nabla f|^{2}\d\nu
\]

We now use Herbst's argument to derive exponential integrability
from this log-Sobolev inequality. Let $F(x)=\ensuremath{d(x,x_{0})}$
be the distance function. By Rademacher's theorem, $|\nabla F|\le1$
almost everywhere. Define the moment generating function for $F$:
$\psi(\lambda)=\int e^{\lambda F}\d\nu$. We assume $\psi(\lambda)<\infty$
for all $\lambda\ge0$.

Apply the LSI to the function $f^{2}=e^{\lambda F}$. Then $|\nabla f|^{2}=\frac{\lambda^{2}}{4}e^{\lambda F}|\nabla F|^{2}$.
Substituting this into the LSI: 
\[
\int\lambda Fe^{\lambda F}\d\nu-\psi(\lambda)\ln\psi(\lambda)-\alpha_{0}\psi(\lambda)\le\frac{4}{C}\int\frac{\lambda^{2}}{4}e^{\lambda F}|\nabla F|^{2}\d\nu.
\]
Since $|\nabla F|\le1$, we have: 
\[
\frac{\lambda\psi^{\prime}(\lambda)-\psi(\lambda)\ln\psi(\lambda)-\alpha_{0}\psi(\lambda)}{\lambda^{2}\psi(\lambda)}\le\frac{1}{C}.
\]

Notice that the left-hand side is the derivative of the function $\lambda\mapsto\frac{\ln\psi(\lambda)+\alpha_{0}}{\lambda}$,
i.e., $\frac{\d}{\d\lambda}\left(\frac{\ln\psi(\lambda)+\alpha_{0}}{\lambda}\right)\le\frac{1}{C}$.
Integrating from a constant $\lambda_{0}$ to $\lambda$ yields $\frac{\ln\psi(\lambda)+\alpha_{0}}{\lambda}-C_{1}\le\frac{\lambda}{C}$
where $C_{1}=\frac{\ln\psi(\lambda_{0})+\alpha_{0}}{\lambda_{0}}$.
Thus, 
\[
\psi(\lambda)\le e^{\lambda^{2}/C+C_{1}\lambda-\alpha_{0}}.
\]

Note that in the proof above, we assume $\psi(\lambda)<\infty$ for
all $\lambda\ge0$. This is in fact unnecessary since we can obtain
the same inequality by substituting $F\leftarrow F_{n}=\min\{F,n\}$
in the proof above and letting $n\to\infty$.

Using Markov's inequality, we obtain the tail estimate for any $r>0$:
\[
\nu(F\ge r)=\nu(e^{\lambda F}\ge e^{\lambda r})\le\frac{\mathbb{E}[e^{\lambda F}]}{e^{\lambda r}}\le e^{\lambda^{2}/C+C_{1}\lambda-\alpha_{0}-\lambda r}.
\]
To get the tightest bound, we minimize the exponent with respect to
$\lambda$. Setting $\lambda=C(r-C_{1})/2$ gives $\nu(F\ge r)\le e^{-\frac{C(r-C_{1})^{2}}{4}-\alpha_{0}}$.

We now use this tail estimate to evaluate $\mathbb{E}_{\nu}[e^{\alpha F^{2}}]$.
We use the layer-cake representation $\mathbb{E}_{\nu}[e^{\alpha F^{2}}]=\int_{0}^{\infty}\nu(e^{\alpha F^{2}}>t)\d t$.
Let $t=e^{\alpha r^{2}}$, which implies $r=\sqrt{\frac{\ln t}{\alpha}}$.
Changing variables $\mathbb{E}_{\nu}[e^{\alpha F^{2}}]=1+\int_{0}^{\infty}\nu(F>r)\cdot2\alpha re^{\alpha r^{2}}\d r$.
Substitute the tail bound into the integral: 
\[
\mathbb{E}_{\nu}[e^{\alpha F^{2}}]\le1+\int_{0}^{\infty}2e^{-\frac{C(r-C_{1})^{2}}{4}-\alpha_{0}}\cdot2\alpha re^{\alpha r^{2}}\d r=1+4\alpha\int_{0}^{\infty}re^{\alpha r^{2}-\frac{C(r-C_{1})^{2}}{4}-\alpha_{0}}\d r.
\]
Obviously, the integral converges to a finite value for any $\alpha<\frac{C}{4}$.
This contradicts with that Condition \ref{cond:integrability} does
not hold. Hence, $\breve{\Theta}(\alpha)=0$ for any $\alpha>0$.

\section*{Appendix B: Proof of Theorem \ref{thm:approximate}}\label{sec:Appendix-B:-Proof}

Observe that $\{\nu_{m}^{\otimes n}\}$ converges to $\nu^{\otimes n}$
in total-variation distance and in addition $\nu_{m}^{\otimes n}(B)\ge(1-1/m)^{n}\nu^{\otimes n}(B)$
for any Borel set $B$. E. Milman \cite[Theorem 6.10]{milman2009role}
showed that in this case, for all $a\in[0,1]$, $I_{\nu^{\otimes n}}(a)=\lim_{m\to\infty}I_{\nu_{m}^{\otimes n}}(a),$
and consequently, $I_{\nu^{\otimes n}}$ is concave. Let $A$ be an
isoperimetric minimizer for $\nu_{m}^{\otimes n}$ in the standard
sense. By Theorem \ref{thm:isop-isop}, for any $r>0$, 
\[
\frac{I_{\nu_{m}^{\otimes n}}(a)}{a}\ge\frac{1}{r}\ln\frac{\nu_{m}^{\otimes n}(A^{r})}{a}\ge\frac{1}{r}\ln\frac{(1-1/m)^{n}\nu^{\otimes n}(A^{r})}{a}\ge\frac{1}{r}\ln\frac{(1-1/m)^{n}\Gamma_{\nu^{\otimes n}}(a,r)}{a}.
\]
Taking the limit as $m\to\infty$, we obtain $\frac{I_{\nu^{\otimes n}}(a)}{a}\ge\frac{1}{r}\ln\frac{\Gamma_{\nu^{\otimes n}}(a,r)}{a}.$
Note that $\mathrm{RCD}(0,\infty)$ is satisfied by $\nu$, and thus,
if Condition \ref{cond:integrability} is additionally satisfied,
then Theorem \ref{thm:LD2} is valid for $\nu$. The remaining part
of the proof is exactly the same as Theorem \ref{thm:equivalence}.

As for the other direction, noting that $I_{\nu^{\otimes n}}$ is
concave for every $n$ and $(\M,d,\nu)$ is a $\mathrm{RCD}(0,\infty)$-space,
our first proof given in Section \ref{sec:From-Isoperimetry-to} still
works for this space, but with the HWI inequality replaced by \cite[Theorem 9.3]{ambrosio2014calculus}.
Hence, $\Lambda=\sqrt{\breve{\Theta}}$ holds.

If Condition \ref{cond:integrability} is not satisfied, then $\Lambda\le\sqrt{\breve{\Theta}}$
still holds, and thus, by the same proof in Appendix A, $\Lambda=\sqrt{\breve{\Theta}}=0$.

\section*{Appendix C: Proof of Theorem \ref{thm:comparison}}\label{sec:Proof-of-Theorem}

In this section, we now prove the remaining unproven parts of Theorem
\ref{thm:comparison}.

\subsection*{Appendix C-1. Inequality (b) }

We first prove Inequality (b) in Theorem \ref{thm:comparison}. Due
to Theorem \ref{thm:DensityConcave-2}, or more precisely, due to
(3.1) and (4.2) in \cite{milman2010isoperimetric} but with $K_{CD}\ge0$,
it holds that for any $n$, 
\[
\frac{1}{2}-a\le I_{n}(a)\int_{0}^{r(a)}\exp\left(\frac{I_{n}(a)}{a}t-\frac{K_{CD}}{2}t^{2}\right)\d t,
\]
where $r(a)=\sqrt{\frac{2}{K_{C}}\ln\frac{1}{a}}$. Taking limits
as $n\to\infty$, we have 
\begin{equation}
\frac{1}{2}-a\le I_{\inf}(a)\int_{0}^{r(a)}\exp\left(\frac{I_{\inf}(a)}{a}t-\frac{K_{CD}}{2}t^{2}\right)\d t.\label{eq:-25}
\end{equation}
By definition, $\sqrt{K_{IS}^{-}}=\liminf_{a\to0}\frac{I_{\inf}(a)}{a\sqrt{2\ln\frac{1}{a}}}$,
which implies that for any $\epsilon>0$, there is a decreasing sequence
$a_{k}$ converging to $0$ as $k\to\infty$ such that for all sufficiently
large $k$, 
\begin{equation}
I_{\inf}(a_{k})\le a_{k}\sqrt{2(K_{IS}^{-}+\epsilon)\ln\frac{1}{a_{k}}}.\label{eq:-26}
\end{equation}
Substituting $a_{k}$ and the upper bound in \eqref{eq:-26} into
\eqref{eq:-25} yields 
\[
\frac{1}{2a_{k}}-1\le\sqrt{2(K_{IS}^{-}+\epsilon)\ln\frac{1}{a_{k}}}\int_{0}^{r(a_{k})}\exp\left(\sqrt{2(K_{IS}^{-}+\epsilon)\ln\frac{1}{a_{k}}}t-\frac{K_{CD}}{2}t^{2}\right)\d t.
\]
Denote $\alpha_{k}=\ln\frac{1}{a_{k}}$, which diverges to $+\infty$.
Substituting it to the inequality above and performing change of variables,
we then have 
\begin{align*}
\frac{1}{2}e^{\alpha_{k}}-1 & \le\sqrt{2(K_{IS}^{-}+\epsilon)}\alpha_{k}\int_{0}^{\sqrt{\frac{2}{K_{C}}}}\exp\left(\sqrt{2(K_{IS}^{-}+\epsilon)}\alpha_{k}t-\frac{K_{CD}}{2}\alpha_{k}t^{2}\right)\d t.
\end{align*}
Note that $K_{C}\ge K_{IS}^{-}\ge K_{CD}$, which implies $K_{C}\left(K_{IS}^{-}+\epsilon\right)>K_{CD}$.
By Laplace's method, under this condition, the last line above is
\[
\sim\frac{\sqrt{\left(K_{IS}^{-}+\epsilon\right)K_{C}}}{\sqrt{\left(K_{IS}^{-}+\epsilon\right)K_{C}}-K_{CD}}\exp\left((2\sqrt{\frac{K_{IS}^{-}+\epsilon}{K_{C}}}-\frac{K_{CD}}{K_{C}})\alpha_{k}\right)
\]
as $k\to\infty$. It is asymptotically larger than $\frac{1}{2}e^{\alpha_{k}}-1$
only if $2\sqrt{\frac{K_{IS}^{-}+\epsilon}{K_{C}}}-\frac{K_{CD}}{K_{C}}\ge1$,
i.e., $K_{IS}^{-}\ge\left(\frac{1+K_{CD}/K_{C}}{2}\right)^{2}K_{C}$
with letting $\epsilon\to0$.

\subsection*{Appendix C-2. Inequality (k)}

Inequality (k) is exactly the following inequality: for any $n\ge1$,
\begin{equation}
\liminf_{a\to0}\frac{I_{n}(a)}{a\sqrt{2\ln\frac{1}{a}}}\ge\sqrt{K_{LS}^{+}}.\label{eq:-16}
\end{equation}

Given any $n$, to prove \eqref{eq:-16}, it suffices to show that
for any sequence $\{a_{i}\}_{i\in\mathbb{N}}$ of positive numbers
that converges to $0$, it holds that 
\begin{equation}
\liminf_{i\to0}\frac{I_{n}(a_{i})}{a_{i}\sqrt{2\ln\frac{1}{a_{i}}}}\ge\sqrt{K_{LS}^{+}}.\label{eq:-17}
\end{equation}

Let $\alpha>0$. For each $a_{i}$, one can find an integer $n_{i}$
such that $e^{-(n_{i}+1)\alpha}<a_{i}\le e^{-n_{i}\alpha}$. As $i\to\infty$,
it holds that $n_{i}\to\infty$. So, by the monotonicity of $t\mapsto\frac{I_{n}(t)}{t}$
\cite[Proposition 3.1]{milman2010isoperimetric} and $t\mapsto\sqrt{\ln\frac{1}{t}}$,
\[
\frac{I_{n}(a_{i})}{a_{i}\sqrt{2\ln\frac{1}{a_{i}}}}\ge\frac{I_{n}(e^{-n_{i}\alpha})}{e^{-n_{i}\alpha}\sqrt{2(n_{i}+1)\alpha}}.
\]
Taking $\liminf_{i\to0}$, we obtain 
\begin{align*}
\liminf_{i\to0}\frac{I_{n}(a_{i})}{a_{i}\sqrt{2\ln\frac{1}{a_{i}}}} & \ge\liminf_{i\to0}\frac{I_{n}(e^{-n_{i}\alpha})}{e^{-n_{i}\alpha}\sqrt{2(n_{i}+1)\alpha}}=\liminf_{i\to0}\frac{I_{n}(e^{-n_{i}\alpha})}{e^{-n_{i}\alpha}\sqrt{2n_{i}\alpha}}\\
 & \ge\liminf_{i\to0}\frac{I_{n_{i}}(e^{-n_{i}\alpha})}{e^{-n_{i}\alpha}\sqrt{2n_{i}\alpha}}\ge\sqrt{\frac{\breve{\Theta}(\alpha)}{2\alpha}},
\end{align*}
where the second inequality is due to that $I_{n}(a)\ge I_{m}(a)$
for all $m\ge n$ and any $a$. Since $\alpha>0$ is arbitrary, we
obtain \eqref{eq:-17}.

\section*{Appendix D: Proof of Theorem \ref{thm:I_inf} }\label{sec:Proof-of-Theorem-1}

We next compare the dimension-free isoperimetric profile $I_{\inf}$
and the infinite-dimensional isoperimetric profile $I_{\infty}$.
Recall that the dimension-free isoperimetric profile $I_{\inf}(a):=\lim_{n\to\infty}I_{n}(a)=\inf_{n\ge1}I_{n}(a),$
and $I_{\infty}$ is the isoperimetric profile for the space $(\M^{\infty},\nu^{\otimes\infty})$. 
\begin{thm}
\label{thm:In}Let $(\mathcal{X},d,\nu)$ be a locally compact Polish
probability metric space. For $n\in\mathbb{N}\cup\{\infty\}$, let
\begin{equation}
I_{n}(a):=\inf_{\mathrm{closed}\,A:\nu_{n}(A)=a}\nu_{n}^{+}(A),\;a\in[0,1]\label{eq:I}
\end{equation}
be the isoperimetric profile for the $n$-product space $(\mathcal{X}^{n},\nu^{\otimes n})$,
where the enlargement of a set is defined under $d_{n}(\mathbf{x},\mathbf{y}):=\sqrt{\sum_{i=1}^{n}d(x_{i},y_{i})^{2}}$.
If $I_{\inf}$ given above is continuous at $a$, then $I_{\inf}(a)=I_{\infty}(a).$ 
\end{thm}

\begin{rem}
We restrict the set $A$ in \eqref{eq:I} to be closed, in order to
ensure that $A^{r}\to A$ and hence $\nu_{n}(A^{r})\to\nu_{n}(A)$
as $r\downarrow0$. This restriction usually does not affect the isoperimetric
profile, e.g., for the smooth Riemannian manifold setting with an
absolutely continuous reference measure \cite{morgan2003regularity}. 
\end{rem}

In our Riemannian manifold setting, under Condition \ref{cond:convexity},
the isoperimetric profiles $I_{n},n\in\mathbb{N}$ are all concave,
which implies their pointwise minimum $I_{\inf}$ is concave as well
and hence also continuous on $(0,1)$. So, Theorem \ref{thm:In} implies
$I_{\inf}(a)=I_{\infty}(a)$ for all $a\in(0,1)$. Moreover, obviously
$I_{\inf}(a)=I_{\infty}(a)=0$ for $a\in\{0,1\}$. Thus, $I_{\inf}$
is continuous on $[0,1]$. Theorem \ref{thm:I_inf} is a consequence
of Theorem \ref{thm:In}. Hence it remains to prove Theorem \ref{thm:In}.

\subsection*{Appendix D-1. Proof of Theorem \ref{thm:In} }

In the following, we also use $\mathcal{X}^{\mathbb{N}}$ to denote
$\mathcal{X}^{\infty}$, the countably infinite product of $\mathcal{X}$
with itself. Recall that $\nu_{n}=\nu^{\otimes n}$ with $n\in\mathbb{N}\cup\{\infty\}$.

Observe that $I_{\infty}(a)\le I_{\inf}(a)$, since for any $n$ and
$B\subseteq\mathcal{X}^{n}$, $B\times\mathcal{X}^{\mathbb{N}}$ in
$\mathcal{X}^{\mathbb{N}}$ has the same measure and the same boundary
as those of $B$ in $\mathcal{X}^{n}$. We next prove that $I_{\infty}(a)\ge I_{\inf}(a)$.
We follow similar proof steps for Gaussian measures in \cite[Theorem 2.2.4]{gine2021mathematical}.
We first make the following claim. 
\begin{claim}
\label{claim:Let--be}Let $\pi_{n}\colon\mathcal{X}^{\mathbb{N}}\to\mathcal{X}^{n}$
be the projection $\pi_{n}(\mathbf{x})=\pi_{n}(x_{k}:k\in\mathbb{N})=(x_{1},\dots,x_{n})$.
Then it holds that: (a) $\nu_{n}=\nu_{\infty}\circ\pi_{n}^{-1}$,
(b) if $K\subseteq\mathcal{X}^{\mathbb{N}}$ is compact in the product
topology, then $K=\bigcap_{n=1}^{\infty}\pi_{n}^{-1}(\pi_{n}(K))$,
(c) $K^{r}:=\bigcup_{\mathbf{x}\in K}B_{r]}(\mathbf{x})$ is compact
in the product topology if $K$ is, where $B_{r]}(\mathbf{x})$ is
the closed ball with center $\mathbf{x}$ and radius $r$ under the
metric $d_{\infty}$. 
\end{claim}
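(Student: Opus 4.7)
Parts (a) and (b) are essentially measure-theoretic and topological bookkeeping. For \textbf{(a)}, by the Kolmogorov extension theorem $\nu^{\infty}$ is characterized as the unique Borel probability on $\mathcal{X}^{\mathbb{N}}$ whose finite-dimensional marginals are the $\nu^{\otimes n}$, so pushing forward by $\pi_n$ returns exactly $\nu_n$. For \textbf{(b)}, the inclusion $K \subseteq \bigcap_n \pi_n^{-1}(\pi_n(K))$ is immediate. Conversely, if $\mathbf{x} \notin K$, then since $K$ is closed in the Hausdorff space $\mathcal{X}^{\mathbb{N}}$, the point $\mathbf{x}$ admits an open neighborhood disjoint from $K$; since basic open sets in the product topology are cylinders $\pi_n^{-1}(V)$ with $V \subseteq \mathcal{X}^n$ open, there exist $n$ and an open $V \ni \pi_n(\mathbf{x})$ with $\pi_n^{-1}(V) \cap K = \emptyset$, which forces $V \cap \pi_n(K) = \emptyset$ and hence $\mathbf{x} \notin \pi_n^{-1}(\pi_n(K))$.

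Part \textbf{(c)} is the main content, and my plan is to use sequential compactness; the product topology on $\mathcal{X}^{\mathbb{N}}$ is metrizable because $\mathcal{X}$ is Polish. Given a sequence $\mathbf{y}^{(k)} \in K^r$, pick witnesses $\mathbf{x}^{(k)} \in K$ with $d_{\infty}(\mathbf{x}^{(k)}, \mathbf{y}^{(k)}) \leq r$. By compactness of $K$, after passing to a subsequence we have $\mathbf{x}^{(k)} \to \mathbf{x}^{\star} \in K$ in the product topology. For each coordinate $i$ the bound $d(y_i^{(k)}, x_i^{(k)}) \leq r$ together with the convergence of $x_i^{(k)}$ confines $\{y_i^{(k)}\}_k$ eventually to a closed $2r$-neighborhood of $x_i^{\star}$; I would then invoke local compactness of $\mathcal{X}$ (which in the paper's Riemannian manifold setting upgrades to properness of closed balls via Hopf--Rinow) to conclude that this neighborhood has compact closure. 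A standard diagonal extraction then produces a further subsequence with $y_i^{(k)} \to y_i^{\star}$ for every $i$, so $\mathbf{y}^{(k)} \to \mathbf{y}^{\star}$ in the product topology. Finally, Fatou's lemma applied to the inequalities $\sum_i d(x_i^{(k)}, y_i^{(k)})^2 \leq r^2$, together with continuity of $d$ in each coordinate, yields $d_{\infty}(\mathbf{x}^{\star}, \mathbf{y}^{\star}) \leq r$, hence $\mathbf{y}^{\star} \in K^r$.

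The main obstacle I foresee is precisely this coordinatewise extraction step in (c): bare local compactness of $\mathcal{X}$ does not on its own guarantee that closed $r$-neighborhoods of compact sets have compact closure (for instance, a locally compact Polish space equipped with a bounded metric easily fails this property). In the concrete setting where Theorem \ref{thm:Let--be} is applied---complete Riemannian manifolds with density---the conclusion is automatic via Hopf--Rinow, so I would either add ``proper'' (Heine--Borel) as a standing assumption in the statement or specialize (c) to the Riemannian context that is actually needed elsewhere in the paper.
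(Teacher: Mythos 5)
Your proposal is essentially correct, and for parts (a) and (b) it matches the paper in substance (the paper argues (b) by extracting points $\mathbf{x}^{(n)}\in K$ agreeing with $\mathbf{x}$ in the first $n$ coordinates and using closedness of $K$; your complement/cylinder-base argument is an equivalent, slightly cleaner route that only uses closedness). For (c) your route differs in packaging: the paper first traps $K^{r}$ inside the product $\prod_{i}C_{i}^{r}$ with $C_{i}=\pi_{i}(K)$, invokes Tychonoff, and then shows $K^{r}=\bigcap_{N}K_{N}^{r}$ is closed via a truncation/Minkowski argument, whereas you argue directly by sequential compactness (metrizability of the countable product) with a diagonal extraction and a Fatou passage to the limit. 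Both arguments hinge on exactly the same coordinatewise fact, namely that the closed $r$-neighborhood of the compact set $\pi_{i}(K)$ in $\mathcal{X}$ is compact, so the two proofs buy essentially the same thing; yours avoids Tychonoff and the explicit closedness step, the paper's avoids metrizability and diagonalization.

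The obstacle you flag is genuine, and in fact it is present in the paper's own proof: the paper asserts that $C_{i}^{r}=B_{r]}(C_{i})$ is compact ``since $\mathcal{X}$ is locally compact,'' but local compactness alone does not give this (take $\mathcal{X}$ an infinite countable set with the discrete $0$--$1$ metric: it is locally compact and Polish, yet the closed $1$-neighborhood of a singleton is the whole non-compact space). What is actually needed is properness (closed bounded sets compact), which is exactly your suggested fix; it holds automatically in the setting where Theorem \ref{thm:Let--be} is applied, since a complete Riemannian manifold is proper by Hopf--Rinow. So your proposed repair (add properness as a hypothesis, or state (c) only in the Riemannian/proper setting) is the right one, and with it your proof of (c) goes through; the same hypothesis would also repair the corresponding step in the paper's argument.
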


\begin{proof}[Proof of Claim \ref{claim:Let--be}]
(a) is obvious.

(b) Inclusion $K\subseteq\bigcap_{n}\pi_{n}^{-1}(\pi_{n}(K))$: By
definition, $\pi_{n}(\mathbf{x})\in\pi_{n}(K)$ for any $\mathbf{x}\in K$,
so $\mathbf{x}\in\pi_{n}^{-1}(\pi_{n}(K))$ for all $n$. Thus, $\mathbf{x}\in\bigcap_{n=1}^{\infty}\pi_{n}^{-1}(\pi_{n}(K)).$

Inclusion $\bigcap_{n}\pi_{n}^{-1}(\pi_{n}(K))\subseteq K$: Suppose
$\mathbf{x}\in\bigcap_{n=1}^{\infty}\pi_{n}^{-1}(\pi_{n}(K))$. Then
for each $n$, $\pi_{n}(\mathbf{x})\in\pi_{n}(K)$, so there exists
$\mathbf{x}^{(n)}\in K$ such that $\pi_{n}(\mathbf{x}^{(n)})=\pi_{n}(\mathbf{x})$.
Since $K$ is compact in the product topology of $\mathcal{X}^{\mathbb{N}}$,
it is closed and sequentially compact. The sequence $(\mathbf{x}^{(n)})$
lies in $K$. The projections $\pi_{n}(\mathbf{x}^{(n)})=\pi_{n}(\mathbf{x})$
imply that the first $n$ coordinates of $\mathbf{x}^{(n)}$ match
those of $\mathbf{x}$. This implies $\mathbf{x}^{(n)}\to\mathbf{x}$
in the product topology. Since $K$ is closed, $\mathbf{x}\in K$.
Hence, $K=\bigcap_{n=1}^{\infty}\pi_{n}^{-1}(\pi_{n}(K)).$

(c) We first note that $d_{\infty}$ might be infinity between two
points, and it does not induce the product topology (topology of coordinate-wise
convergence) on $\mathcal{X}^{\mathbb{N}}$. In fact, the convergence
in $d_{\infty}$ implies the convergence in product topology.

Let $K\subseteq\prod_{i=1}^{\infty}X_{i}$ be compact, and define
\[
C_{i}:=\pi_{i}(K)\subseteq X_{i},
\]
which is compact in $X_{i}$ since $\pi_{i}$ is continuous. For any
$\mathbf{y}\in K^{r}$, there exists $\mathbf{x}\in K$ such that
$\sum_{i=1}^{\infty}d(x_{i},y_{i})^{2}\le r^{2}.$ Hence, for each
coordinate $i$, $d(y_{i},C_{i})\le d(y_{i},x_{i})\le r,$ because
$x_{i}\in C_{i}$. This implies 
\[
y_{i}\in C_{i}^{r}:=B_{r]}(C_{i}):=\{z\in X_{i}:d(z,C_{i})\le r\},
\]
which is compact since $\mathcal{X}$ is locally compact. Therefore,
$K^{r}\subseteq\prod_{i=1}^{\infty}C_{i}^{r},$ and by Tychonoff's
theorem, $\prod_{i=1}^{\infty}C_{i}^{r}$ is compact in the product
topology.

We now prove that $K^{r}$ is closed in the product topology. Fix
$\mathbf{x}\in K$. For each $N\in\mathbb{N}$, define 
\[
B_{r]}^{(N)}(\mathbf{x}):=\left\{ \mathbf{y}:\sum_{i=1}^{N}d(x_{i},y_{i})^{2}\le r^{2}\right\} \textrm{ and }K_{N}^{r}:=\bigcup_{\mathbf{x}\in K}B_{r]}^{(N)}(\mathbf{x}).
\]
Obviously, $K_{N}^{r}$ is non-increasing in $N$, and $K^{r}\subseteq K_{N}^{r}$.
So, $K^{r}\subseteq G:=\bigcap_{N=1}^{\infty}K_{N}^{r}$.

On the other hand, let $\mathbf{y}\in G$. It holds that for each
$N$, there is $\mathbf{x}^{(N)}\in K$ such that $\sum_{i=1}^{N}d(x_{i}^{(N)},y_{i})^{2}\le r^{2}$.
Since $K$ is compact in the product topology, there is a subsequence
$\mathbf{x}^{(N_{k})},k\in\mathbb{N}$ which converges in the product
topology to some point $\mathbf{z}\in K$. By the definition of product
topology, for each $i$, $d(x_{i}^{(N_{k})},y_{i})\to d(z_{i},y_{i})$
as $k\to\infty$. Given any $m\in\mathbb{N}$, for sufficiently large
$k$ such that $N_{k}\ge m$, by triangle inequality and Minkowski
inequality, it holds that 
\begin{align*}
 & \sum_{i=1}^{m}d(z_{i},y_{i})^{2}\le\sum_{i=1}^{m}(d(x_{i}^{(N_{k})},y_{i})+d(x_{i}^{(N_{k})},z_{i}))^{2}\\
 & \qquad\le\Big(\sqrt{\sum_{i=1}^{m}d(x_{i}^{(N_{k})},y_{i})^{2}}+\sqrt{\sum_{i=1}^{m}d(x_{i}^{(N_{k})},z_{i})^{2}}\Big)^{2}\le\Big(r+\sqrt{\sum_{i=1}^{m}d(x_{i}^{(N_{k})},z_{i})^{2}}\Big)^{2},
\end{align*}
Taking limits for both sides as $k\to\infty$, we obtain that $\sum_{i=1}^{m}d(z_{i},y_{i})^{2}\le r^{2}.$
Letting $m\to\infty$ yields $\sum_{i=1}^{\infty}d(z_{i},y_{i})^{2}\le r^{2}.$
That is, $\mathbf{y}\in K^{r}$. So, $G\subseteq K^{r}$. Hence, $K^{r}=G=\bigcap_{N=1}^{\infty}K_{N}^{r}$.

By a similar convergence argument, the set $K_{N}^{r}$ is closed
in the product topology. Note that $K^{r}=\bigcap_{N=1}^{\infty}K_{N}^{r}$
an intersection of closed sets, hence $K^{r}$ is closed in the product
topology. Since $K^{r}$ is closed in the compact set $\prod_{i=1}^{\infty}C_{i}^{r}$,
it follows that $K^{r}$ is compact. 
\end{proof}
We now turn back to prove $I_{\infty}(a)\ge I_{\inf}(a)$. Let $A\subseteq\mathcal{X}^{\mathbb{N}}$
be a compact set. Let $A_{n}=\pi_{n}^{-1}(\pi_{n}(A))$. Then, by
the claim above, $A=\lim_{n\to\infty}A_{n}$, and $A^{r}$ is compact
as well. By the claim above again, we have $A^{r}=\lim_{n\to\infty}B_{n}$,
where $B_{n}=\pi_{n}^{-1}(\pi_{n}(A^{r}))=\pi_{n}^{-1}(\pi_{n}(A)^{r})=A_{n}^{r}$.
So, by the continuity of probability measures, 
\begin{align*}
\frac{\nu_{\infty}(A^{r})-\nu_{\infty}(A)}{r} & =\lim_{n\to\infty}\frac{\nu_{\infty}(A_{n}^{r})-\nu_{\infty}(A_{n})}{r}=\lim_{n\to\infty}\frac{\nu_{n}(E_{n}^{r})-\nu_{n}(E_{n})}{r},
\end{align*}
where $E_{n}:=\pi_{n}(A)$.

Since $f(t):=\nu_{n}(E_{n}^{r})$ is monotone, it is differentiable
almost everywhere on $(0,r)$. Hence, by the mean value theorem, there
is some $t_{n}\in(0,r)$ such that $f$ is differentiable at $t_{n}$,
and moreover, $\frac{f(r)-f(0)}{r}\ge f'(t_{n}),$ i.e., $\frac{\nu_{n}(E_{n}^{r})-\nu_{n}(E_{n})}{r}\ge\nu_{n}^{+}(E_{n}^{t_{n}}).$
Hence, 
\begin{align*}
\frac{\nu_{\infty}(A^{r})-\nu_{\infty}(A)}{r} & \ge\limsup_{n\to\infty}\nu_{n}^{+}(E_{n}^{t_{n}})\ge\limsup_{n\to\infty}I_{n}(\nu_{n}(E_{n}^{t_{n}}))\ge\limsup_{n\to\infty}I_{\inf}(\nu_{n}(E_{n}^{t_{n}})).
\end{align*}

Observe that $\nu_{\infty}(A_{n})=\nu_{n}(E_{n})\le\nu_{n}(E_{n}^{t_{n}})\le\nu_{n}(E_{n}^{r})=\nu_{\infty}(B_{n}).$
Taking $n\to\infty$, $\nu_{\infty}(A)\le\limsup_{n\to\infty}\nu_{n}(E_{n}^{t_{n}})\le\nu_{\infty}(A^{r}).$
Therefore, 
\begin{align*}
\frac{\nu_{\infty}(A^{r})-\nu_{\infty}(A)}{r} & \ge\inf_{\nu_{\infty}(A)\le a\le\nu_{\infty}(A^{r})}I_{\inf}(a).
\end{align*}
Taking $r\downarrow0$ and by the continuity of $I_{\inf}$ at $a$,
$\nu_{\infty}^{+}(A)\ge I_{\inf}(a).$ So, Theorem \ref{thm:In} holds
for compact sets.

We now extend the theorem to closed sets. Let $A\subseteq\mathcal{X}^{\mathbb{N}}$
be a closed set. Since $\mathcal{X}^{\mathbb{N}}$ equipped with the
product topology is Polish, it follows that $\nu_{\infty}$ is tight.
That is, given any $\epsilon>0$, there is a compact set $K_{\epsilon}\subseteq\mathcal{X}^{\mathbb{N}}$
such that $\nu_{\infty}(K_{\epsilon}^{c})\le\epsilon$. Let $A_{\epsilon}=A\cap K_{\epsilon}$
which is compact. So, $\nu_{\infty}(A_{\epsilon})\le\nu_{\infty}(A)\le\nu_{\infty}(A_{\epsilon})+\epsilon$
and $\nu_{\infty}(A_{\epsilon}^{r})\le\nu_{\infty}(A^{r})\le\nu_{\infty}(A_{\epsilon}^{r})+\epsilon$.
So, 
\begin{align*}
 & \frac{\nu_{\infty}(A^{r})-\nu_{\infty}(A)}{r}\ge\frac{\nu_{\infty}(A_{\epsilon}^{r})-\nu_{\infty}(A_{\epsilon})-\epsilon}{r}\\
 & \qquad\ge-\frac{\epsilon}{r}+\inf_{\nu_{\infty}(A_{\epsilon})\le a\le\nu_{\infty}(A_{\epsilon}^{r})}I_{\inf}(a)\ge-\frac{\epsilon}{r}+\inf_{\nu_{\infty}(A)-\epsilon\le a\le\nu_{\infty}(A^{r})}I_{\inf}(a).
\end{align*}
Letting first $\epsilon\to0$ and then $r\to0$, by the continuity
of $I_{\inf}$ at $a$, we obtain $\nu_{\infty}^{+}(A)\ge I_{\inf}(a).$
Hence, $I_{\infty}(a)\ge I_{\inf}(a)$.

\section*{Appendix E: Proof of Theorem \ref{thm:clt}}\label{sec:Proof-of-Theorem-clt}

By an approximation argument of Otto and Villani \cite[Appendix A]{otto2000generalization},
without affecting the optimal constant in \eqref{eq:poincare}, it
suffices to consider $f\in W^{1,2}(\M)$ satisfying the following
smoothness and boundedness conditions: $f$ is bounded away from $0$
and $\infty$ on $\M$, and $f$ is smooth and $|\nabla f|^{2}$ is
bounded on $\M$. Without loss of generality, we can further assume
$\mathbb{E}_{\nu}\left[f\right]=0$ and $\mathbb{E}_{\nu}\left[f^{2}\right]=1$,
since otherwise we can normalize $f$ which neither affects the optimal
constant in \eqref{eq:poincare}. Among all this kind of functions,
in the following we let $f$ be the approximately optimal function
attaining the optimal constant in \eqref{eq:poincare}, i.e., $\mathbb{E}[|\nabla f|^{2}]\le(K_{P}+\delta)\cdot\mathrm{Var}_{\nu}(f)=K_{P}+\delta$
for $\delta>0$.

For such $f$, we consider the set $A_{t}=H_{t\sqrt{n}}(f)=\left\{ \mathbf{x}:g_{n}(\mathbf{x})\leq t\right\} $,
where $g_{n}(\mathbf{x})=\frac{1}{\sqrt{n}}\sum_{i=1}^{n}f(x_{i})$.
We next determine the asymptotics of the volume and the perimeter
of this set.

By the coarea formula, 
\begin{equation}
\int_{|g_{n}-t|\le\delta}|\nabla g_{n}|\,d\nu_{n}=\int_{t-\delta}^{t+\delta}\left(\int_{g_{n}=s}\varphi_{n}(\mathbf{x})\,\d\mathrm{\area}(\mathbf{x})\right)\d s\label{eq:-19-2}
\end{equation}
where $\varphi_{n}(\mathbf{x})=e^{-\sum_{i=1}^{n}V(x_{i})}$ is the
density of $\nu_{n}$, $\d\mathrm{\area}$ is the induced volume form
on the level set $\{g_{n}=s\}$, and $|\nabla g_{n}(\mathbf{x})|$
be the norm of the gradient of $g_{n}$ at the point $\mathbf{x}\in\M^{n}$.
Thus, by the mean value theorem, the RHS above is larger than or equal
to 
\begin{equation}
2\delta\cdot\int_{g_{n}=t_{n}}\varphi_{n}(\mathbf{x})\,\d\mathrm{\area}(\mathbf{x})\ge2\delta\cdot\nu_{n}^{+}(A_{t_{n}}),\label{eq:-8}
\end{equation}
for some $t_{n}\in[t-\delta,t+\delta]$. Denoting $\mathbf{X}\sim\nu_{n}$,
the LHS above is equal to 
\begin{equation}
\mathbb{P}\{|g_{n}(\mathbf{X})-t|\le\delta\}\times\mathbb{E}\left[|\nabla g_{n}(\mathbf{X})|\mid|g_{n}(\mathbf{X})-t|\le\delta\right].\label{eq:-9}
\end{equation}

By the central limit theorem, as $n\to\infty$, 
\begin{equation}
\mathbb{P}\{|g_{n}(\mathbf{X})-t|\le\delta\}\to\Phi(t+\delta)-\Phi(t-\delta).\label{eq:-10}
\end{equation}

We next determine the limit of $\mathbb{E}\left[|\nabla g_{n}(\mathbf{X})|\mid|g_{n}(\mathbf{X})-t|\le\delta\right]$.
Note that $|\nabla g_{n}(\mathbf{X})|^{2}=\frac{1}{n}\sum_{i=1}^{n}|\nabla f|^{2}(X_{i})$.
Let $\beta:=\mathbb{E}[|\nabla f|^{2}]\le K_{P}+\delta.$ By the weak
law of large number, $\lim_{n\to\infty}\mathbb{P}\{\big||\nabla g_{n}(\mathbf{X})|^{2}-\beta\big|\le\delta\}\to1.$
By basic probabilistic inequalities, for $\delta>0$, 
\begin{align*}
\mathbb{P}\{\big||\nabla g_{n}(\mathbf{X})|^{2}-\beta\big|\le\delta,|g_{n}(\mathbf{X})-t|\le\delta\} & \ge\mathbb{P}\{|g_{n}(\mathbf{X})-t|\le\delta\}-\mathbb{P}\{\big||\nabla g_{n}(\mathbf{X})|^{2}-\beta\big|>\delta\},
\end{align*}
which implies 
\begin{align*}
\mathbb{P}\{\big||\nabla g_{n}(\mathbf{X})|^{2}-\beta\big|\le\delta\big||g_{n}(\mathbf{X})-t|\le\delta\} & \ge1-\frac{\mathbb{P}\{\big||\nabla g_{n}(\mathbf{X})|^{2}-\beta\big|>\delta\}}{\mathbb{P}\{|g_{n}(\mathbf{X})-t|\le\delta\}}.
\end{align*}
By taking limit as $n\to\infty$, $\lim_{n\to\infty}\mathbb{P}\{\big||\nabla g_{n}(\mathbf{X})|^{2}-\beta\big|\le\delta\big||g_{n}(\mathbf{X})-t|\le\delta\}=1.$
Since $|\nabla f|^{2}$ is bounded by assumption (and thus, so is
$|\nabla g_{n}|^{2}$), we obtain 
\begin{align}
\lim_{n\to\infty}\mathbb{E}\left[|\nabla g_{n}(\mathbf{X})|\mid|g_{n}(\mathbf{X})-t|\le\delta\right] & \le\sqrt{\beta+\delta}\le\sqrt{K_{P}+\delta}+\delta.\label{eq:-12}
\end{align}

Combining \eqref{eq:-19-2}, \eqref{eq:-8}, \eqref{eq:-9}, \eqref{eq:-10},
and \eqref{eq:-12} yields 
\[
\limsup_{n\to\infty}\nu_{n}^{+}(A_{t_{n}})\le\frac{\Phi(t+\delta)-\Phi(t-\delta)}{2\delta}\left(\sqrt{K_{P}+\delta}+\delta\right).
\]
Letting $\delta\to0$ yields 
\begin{equation}
\limsup_{\delta\to0}\limsup_{n\to\infty}\nu_{n}^{+}(A_{t_{n}})\le\sqrt{K_{P}}\phi(t).\label{eq:-14}
\end{equation}

By the central limit theorem, $g_{n}(\mathbf{X})$ converges in distribution
to $Z\sim\mathcal{N}(0,1)$. Thus, the volume $\nu_{n}(A_{t_{n}})$
is asymptotically sandwiched between $\Phi(t-\delta)$ and $\Phi(t+\delta)$
as $n\to\infty$, which implies 
\begin{equation}
\lim_{\delta\to0}\lim_{n\to\infty}\nu_{n}(A_{t_{n}})=\Phi(t).\label{eq:-22}
\end{equation}

Equations \eqref{eq:-14} and \eqref{eq:-22} implies that there is
a sequence of sets $B_{n}\subseteq\M^{n}$ such that $\lim_{n\to\infty}\nu_{n}^{+}(B_{n})\le\sqrt{K_{P}}\phi(t)$
and $\lim_{n\to\infty}\nu_{n}(B_{n})=\Phi(t)$. By continuity of $I_{\inf}$,
it holds that $I_{\inf}(a)\le\sqrt{K_{P}}\cdot I_{\mathrm{G}}(a)$.

\section*{Acknowledgements}

This work was supported by the National Key Research and Development
Program of China under grant 2023YFA1009604 and the NSFC under grant
62101286.

 \bibliographystyle{abbrv}
\bibliography{ref}

\end{document}